\newcommand{\M}{{\mathcal M}}
\newcommand{\R}{{\mathbb R}}
\renewcommand{\eqref}[1]{(\ref{#1})}
\newtheorem{thm}{Theorem}[section]
\newtheorem{theo}[thm]{Theorem}
\newtheorem{coro}[thm]{Corollary}
\newtheorem{lem}[thm]{Lemma}
\newtheorem{prop}[thm]{Proposition}
\theoremstyle{definition}
\newtheorem{exam}{Example}[section]
\newtheorem{defi}{Definition}[section]
\newtheorem{rmk}[thm]{Remark}
\numberwithin{equation}{section}
\renewcommand{\theequation}{\thesection.\arabic{equation}}
\def\vint{\mathop{\mathchoice%
         {\setbox0\hbox{$\displaystyle\intop$}\kern 0.22\wd0%
          \vcenter{\hrule width 0.6\wd0}\kern -0.82\wd0}%
         {\setbox0\hbox{$\textstyle\intop$}\kern 0.2\wd0%
          \vcenter{\hrule width 0.6\wd0}\kern -0.8\wd0}%
         {\setbox0\hbox{$\scriptstyle\intop$}\kern 0.2\wd0%
          \vcenter{\hrule width 0.6\wd0}\kern -0.8\wd0}%
         {\setbox0\hbox{$\scriptscriptstyle\intop$}\kern 0.2\wd0%
          \vcenter{\hrule width 0.6\wd0}\kern -0.8\wd0}}%
         \mathopen{}\int}
\begin{document}

\title[Nodal sets and continuity of eigenfunctions on Riemannian manifolds]
{Nodal sets and continuity of eigenfunctions of Kre$\mathbf{\breve{{\i}}}$n-Feller operators on Riemannian manifolds}

\author[S.-M. Ngai]{Sze-Man Ngai}
\address{Beijing Institute of Mathematical Science and Applications Key Laboratory of High Performance Computing and Stochastic Information Processing (HPCSIP) (Ministry of Education of China), College of
Mathematics and Statistics, Hunan Normal University, Changsha, Hunan 410081, China.}
\email{ngai@bimsa.cn}

\author[W.-Q. Zhao]{Wen-Quan Zhao}
\address{Key Laboratory of High Performance Computing and Stochastic Information
Processing (HPCSIP) (Ministry of Education of China), College of
Mathematics and Statistics, Hunan Normal University, Changsha, Hunan
410081, China.}
\email{zhaowq1008@hunnu.edu.cn}

\thanks{The authors are supported in part by the National Natural Science Foundation of China, grants 12271156, and Construct Program of Key Discipline in Hunan Province.}

\begin{abstract} Let $d\geq1$, $\Omega$ be a bounded domain of a smooth complete Riemannian $d$-manifold $M$, and $\mu$ be a positive finite Borel measure with compact support contained in $\overline{\Omega}$. We prove the Courant nodal domain theorem for the eigenfunctions of the Kre\u{\i}n-Feller operator $\Delta_{\mu}$ under the assumption that such eigenfunctions are continuous on $\overline{\Omega}$. For $d\geq2$, we prove that on a bounded domain $\Omega\subseteq M$ with smooth boundary and on which Green's function of the Laplace-Beltrami operator exists, the eigenfunctions of $\Delta_{\mu}$ are continuous on $\Omega$. We also prove that if $M$ is compact and $\partial M=\emptyset$, then the eigenfunctions of $\Delta_{\mu}$ are continuous on $M$.
\end{abstract}

\date{\today}
\subjclass[2010]{Primary: 35J05, 35B05, 28A80; Secondary: 58C40, 35J08.}
\keywords{Kre\u{\i}n-Feller operator; Riemannian manifold; eigenfunction; nodal set; Green function; weak solution.}
\maketitle

\section{Introduction}
Let $M$ be a $d$-dimensional compact Riemannian manifold with boundary $\partial M$ and $\Delta$ be the Laplace-Beltrami operator defined on $M$. It is well known that the eigenvalues of $\Delta$ can be ordered as
 $$0\leq\lambda_{1}\leq\lambda_{2}\leq\cdots.$$
Moreover, the zero eigenvalue is taken only in the case $\partial M=\emptyset$ and eigenfunctions associated with the eigenvalue $0$ are constants (see, e.g., \cite[Section 3]{Schoen-Yau_1994}). Yau \cite{Yau_1982} conjectured that the nodal set of a $\lambda$-eigenfunction $u_{\lambda}$ on $M$ satisfies:
$$c\lambda^{1/2}\leq\mathcal{H}^{d-1}(\{x\in M:u_{\lambda}(x)=0\})\leq C\lambda^{1/2},$$
where $c$ and $C$ are positive constants and $\mathcal{H}^{d-1}$ is the $(d-1)$-dimensional Hausdorff measure. This conjecture has been studied extensively (see \cite{Donnelly-Fefferman_1988,Donnelly-Fefferman_1990,Logunov_20181,Logunov_20182,Logunov-Malinnikova_2018} and references therein).

A well-known property of the nodal set of an eigenfunction of $\Delta$ is the Courant nodal domain theorem, which states that for a bounded domain $\widetilde{\Omega}\subseteq\mathbb{R}^{d}$, the nodal set of a $\lambda_{i}$-eigenfunction $u_{\lambda_{i}}$ ($i\geq1$) divides $\widetilde{\Omega}$ into at most $i$ subdomains, i.e., the number of nodal domains of $u_{\lambda_{i}}$ is at most $i$ (see, e.g., \cite{Courant-Hilbert_1953}). Results concerning the Courant nodal domain theorem on a bounded domain of $\mathbb{R}^{d}$ can be found in \cite{Alessandrini_1994,Alessandrini_1998,Gladwell-Zhu_2002} and references therein. The Courant nodal domain theorem on a smooth compact Riemannian manifold $M$ with boundary $\partial M$ is slightly more complicated than that on $\mathbb{R}^{d}$ and can be stated as (see, e.g., \cite[Theorem 6.2]{Schoen-Yau_1994}) follows:

\begin{enumerate}
\item[(a)] If $\partial M\neq\emptyset$, then the number of nodal domains of $u_{\lambda_{i}}$ is at most $i$;
\item[(b)] If $\partial M=\emptyset$, then the number of nodal domains of $u_{\lambda_{i}}$ is at most $i+1$.
\end{enumerate}
Cheng \cite{Cheng_1976} showed that $(a)$ above still holds on a compact domain $\Omega$ of $M$ with $\partial\Omega\neq\emptyset$. The Courant nodal domain theorem of eigenfunctions of other operators, such as Dirac operators on manifolds, Schrodinger operators on graph have also been studied (see, e.g., \cite{Bar_1997,Bartsch-Liu-Weth_2004,Davies-Gladwell-Leydold-Stadler_2001}). In this paper, we study the Courant nodal domain theorem for Kre\u{\i}n-Feller operators defined on Riemannian manifolds.

Kre\u{\i}n-Feller operators are introduced by Kre\u{\i}n and Feller in \cite{Feller_1957,Krein_1952} and have been studied extensively.  Ngai {\em et al.} \cite{Ngai-Zhang-Zhao_2024} studied the Courant nodal domain theorem and the continuity of eigenfunctions of Kre\u{\i}n-Feller operators defined on a domain of $\mathbb{R}^{d}$. Ngai and Ouyang \cite{Ngai-Ouyang_2024} studied the spectrum properties of Kre\u{\i}n-Feller operators on a bounded domain of a smooth complete Riemannain manifold. For more work associated with the Kre\u{\i}n-Feller operators, the reader is referred to  \cite{Deng-Ngai_2021,Hu-Lau-Ngai_2006,Kessebohmer-Niemann_2022,Kessebohmer-Niemann_2022A} and references therein.

In this paper, we first prove the nodal domain theorem for Kre\u{\i}n-Feller operators on a bounded domain $\Omega$ of a complete smooth Riemannian manifold $M$, by assuming the continuity of the eigenfunctions. It is worth noting that we allow the boundary of $\Omega$ to be empty; in this case, the domain of $\Delta_{\mu}$ becomes a compact Riemannian manifold. Hence our theorem is consistent with Cheng's in \cite{Cheng_1976} for the Laplace-Beltrami operator. The main difficulty to prove the nodal domain theorem is to prove the maximum principle of $\mu$-subharmonic functions (see Theorem \ref{thmmax}). We use normal coordinate charts constructed in \cite{Ngai-Ouyang_2024} and the maximum principle of a $\mu$-subharmonic function on a bounded domain of $\mathbb{R}^{d}$ to prove the maximum principle on $\Omega$.

The second part of this paper is to study the continuity of eigenfunctions of $\Delta_{\mu}$. The motivation comes from the definition of the node set (see (\ref{nodalset})). Continuity of eigenfunctions is essential in studying many interesting problems, such as Courant's nodal domain theorem, nodal lines of a second eigenfunction \cite{Lin_1987}, and the generalization of Yau's conjecture to Kre\u{\i}n-Feller operators. We define the Green operator $G_{\mu}$ (see (\ref{go})) and prove that $(-\Delta_{\mu})^{-1}$, the inverse of $-\Delta_{\mu}$, equals the sum of $G_{\mu}$ and a harmonic operator (see Theorem \ref{thGmu}) on a bounded domain $\Omega\subseteq M$ that has a smooth boundary, and on which the (Dirichlet) Green's function exists. Inspired by \cite[Chapter 3]{Ponce_2006}, we study weak solutions of the Dirichlet problem with density measure $\mu$ and prove Theorem \ref{thGmu}. Relating the uniqueness of the weak solution in the Sobolev space with the lower $L^{\infty}$-dimension of the measure $\mu$ enables us to prove the theorem. We use Theorem \ref{thGmu} and the methods in \cite{Ngai-Zhang-Zhao_2024} to prove the continuity of the eigenfunctions on $\Omega$. Finally, on a compact connected smooth Riemannian manifold $M$ with $\partial M=\emptyset$, by using a similar method, we prove that the eigenfunctions of $\Delta_{\mu}$ are continuous.

This paper is organized as follows. In Section 2, we summarize the definition of $\Delta_{\mu}$ and give some notation and definitions. We also state the main results of this paper. In Section 3, we prove the maximum principle for $\mu$-subharmonic functions. In Section 4, we study the spectral properties of $\Delta_{\mu}$. In Section 5, we study the weak solution of a Dirichlet problem with density $\mu$. Moreover, in Section 6, we study the continuity of eigenfunctions of $\Delta_{\mu}$ on a bounded domain $\Omega$. In Section 7, we study the continuity of eigenfunctions of $\Delta_{\mu}$ on a compact connected smooth Riemannian manifold $M$ by assuming that $\partial M=\emptyset$. In Section 8, we study $\Delta_{\mu}$ defined on a domain of a Riemann surface which is conformally equivalent to a bounded domain of $\mathbb{R}^{2}$, and give an example of a continuous eigenfunction.

\section{Preliminaries and main relsults}
Let $d\geq1$, $(M,g)$ be a complete smooth Riemannian $d$-manifold with Riemannian metric $g$ and let $\nu$ be the Riemannian volume measure on $M$. Throughout this paper, we assume $M$ is oriented. Let $\Omega\subseteq M$ be a bounded domain (open and connected). Let $\overline{\Omega}$ and $\partial\Omega:=\overline{\Omega}\setminus\Omega$ be the closure and boundary of $\Omega$ respectively. Let $W^{1,2}(\Omega)$ be the Sobolev space equipped with the inner product
$$\langle u,v\rangle_{W^{1,2}(\Omega)}:=\int_{\Omega}uv\,d\nu+\int_{\Omega}\langle\nabla u,\nabla v\rangle\,d\nu,$$
where $\langle\cdot,\cdot\rangle=g(\cdot,\cdot)$. Let $C(\Omega)$ and $C_{c}^{\infty}(\Omega)$ be, respectively, the spaces of  continuous functions and smooth functions on $\Omega$ with compact support.  Let $W_{0}^{1,2}(\Omega)$ be the closure of $C_{c}^{\infty}(\Omega)$ in $\|\cdot\|_{W^{1,2}(\Omega)}$.

When $\partial \Omega=\emptyset$, the Laplace-Beltrami operator is self-adjoint and has discrete eigenvalues $0=\lambda_{0}<\lambda_{1}\leq\lambda_{2}\leq\cdots$. Moreover, the associated eigenfunctions $\phi_{i}$, $i\geq0$, satisfy
\begin{equation}\label{DP}
\Delta \phi_{i}=-\lambda_{i}u_{i},\quad \phi_{i}\not\equiv0.
\end{equation}
Moreover, there exists an orthonormal bases $\{\phi_{i}\}_{i=1}^{\infty}$ of $W_{0}^{1,2}(\Omega)$ consisting of eigenfunctions. When $\partial \Omega\not=\emptyset$, in order to ensure that the Laplace-Beltrami operator is self-adjoint, we consider the Dirichlet problem:
\begin{equation}\label{DP1}
\begin{cases}~-\Delta u(x)=\lambda u(x),&x\in \Omega,\\~ u(x)=0,&x\in\partial \Omega.\end{cases}
\end{equation}
In this case, ${\rm dom}(\Delta)=W_{0}^{1,2}(\Omega)$ and the (Dirichlet) eigenvalues of (\ref{DP1}) satisfy $0<\lambda_{1}\leq\lambda_{2}\leq\cdots$. Moreover, there exists an orthonormal basis $\{\phi_{i}\}_{i=1}^{\infty}$ of $W_{0}^{1,2}(\Omega)$ consisting of eigenfunctions of $-\Delta$.

We briefly describe the definition of a Kre\u{\i}n-Feller operator for the case $\partial\Omega\neq\emptyset$ and omit the details. The case $\partial\Omega=\emptyset$ is similar. The reader is referred to \cite{Hu-Lau-Ngai_2006,Ngai-Ouyang_2024,Ngai-Ouyang_2024a} for details. Since $\partial\Omega\neq\emptyset$, the {\em Poincar\'{e} inequality} holds (see, e.g., \cite[Scetion 3]{Schoen-Yau_1994}): there exists a positive constant $C\in(0,\lambda_{1}]$ such that for any $f\in W_{0}^{1,2}(\Omega)$,
\begin{equation}\label{eq1}
\int_{\Omega}|f|^{2}\,d\nu\leq C^{-1}\int_{\Omega}|\nabla f|^{2}\,d\nu.
\end{equation}
Note that the Poincar\'{e} inequality (\ref{eq1}) implies that $W_{0}^{1,2}(\Omega)$ admits an equivalent inner product defined by
$$\langle u,v\rangle_{W_{0}^{1,2}(\Omega)}:=\int_{\Omega}\langle\nabla u,\nabla v\rangle\,d\nu.$$
Let $\mu$ be a positive finite Borel measure on $M$ with {\rm supp}$(\mu)\subseteq\overline{\Omega}$ and $\mu(\Omega)>0$. Assume $\mu$ satisfies the following {\em Poincar\'e inequality for measures} on $\Omega$:

(MPI) There exists a constant $C>0$ such that for all $v\in C_{c}^{\infty}(\Omega)$,
\begin{equation}\label{MPI}
\int_{\Omega}|u|^{2}\,d\mu\leq C\int_{\Omega}|\nabla u|^{2}\,d\nu.
\end{equation}
(MPI) implies that there exists a closed subspace $\mathcal{F}$ of $W_{0}^{1,2}(\Omega)$ such that the embedding $\mathcal{F}\hookrightarrow L^{2}(\Omega,\mu)$ is injective and dense; moreover, the nonnegative bilinear form $\mathcal{E}(\cdot,\cdot)$ defined on $L^{2}(\Omega,\mu)$ by
\begin{equation}\label{form}
\mathcal{E}(u,v):=\int_{\Omega}\langle\nabla u,\nabla v\rangle\,d\nu
\end{equation}
is closed with {\rm dom}$(\mathcal{E})=\mathcal{F}$ (see \cite[Proposition 3.2]{Ngai-Ouyang_2024}).
Therefore, there exists a nonnegative self-adjoint operator $-\Delta_{\mu}$ such that ${\rm dom}(\mathcal{E})={\rm dom}((-\Delta_{\mu})^{1/2})$ and
$$\mathcal{E}(u,v)=\big\langle(-\Delta_{\mu})^{1/2}u ,~(-\Delta_{\mu})^{1/2}v\big\rangle_{L^{2}(\Omega,\mu)},\qquad \text{for~all~$u,v\in\rm{dom}(\mathcal{E})$}$$
(see \cite{Davies_1995}), where $\langle\cdot,\cdot\rangle_{L^{2}(\Omega,\mu)}$ is the inner product on $L^{2}(\Omega,\mu)$.
 We call the above $\Delta_{\mu}$ the {\em(Dirichlet) Laplacian} with respect to $\mu$ or the {\em Kre\u{\i}n-Feller operator} defined by $\mu$.
It follows from \cite[Proposition 3.3]{Ngai-Ouyang_2024} that
$u\in\rm{dom}(\Delta_{\mu})$ and $-\Delta_{\mu}u=f$ if and only if $-\Delta u=f\,d\mu$ in the sense of distribution, i.e.,
\begin{equation}
\int_{\Omega}\langle\nabla u,\nabla v\rangle\,d\nu=\int_{\Omega}fv\,d\mu, \qquad~\text{for all}~v\in C_{c}^{\infty}(\Omega).
\end{equation}

Recall that the {\em lower $L^{\infty}$-dimension} of $\mu$ is defined as
\begin{equation}\label{dim}
\underline{\rm{dim}}_{\infty}(\mu):=\liminf_{\delta\to0^{+}}\frac{\ln(\sup_{x}\mu(B^{M}_{\delta}(x)))}{\ln{\delta}},
\end{equation}
where $B^{M}_{\delta}(x)$ is the ball with radius $\delta$ and center $x\in{\rm supp}(\mu)$, and the supremum is taken over all $x\in{\rm supp}(\mu)$ (see \cite{Strichartz_1993} for details). It is known that if $\underline{\dim}_{\infty}(\mu)>d-2$, then (MPI) holds on $\Omega$ (\cite[Theorem 2.1]{Ngai-Ouyang_2024}). Throughout this paper, we assume the measure $\mu$ satisfies $\underline{\dim}_{\infty}(\mu)>d-2$ to ensure that the operator $-\Delta_{\mu}$ is well defined.

For a bounded domain $\Omega\subseteq M$, if $\partial \Omega\not=\emptyset$, we consider eigenvalues and eigenfunctions associated with the following Dirichlet problem:
\begin{equation}\label{eigen}
\begin{cases}~-\Delta_{\mu}u(x)=\lambda u(x),&x\in \Omega,\\~ u(x)=0,&x\in\partial \Omega.\end{cases}
\end{equation}
If $\partial \Omega=\emptyset$, the associated eigenvalues and eigenfunctions are the solutions of
\begin{equation}\label{eigen2}
-\Delta_{\mu}u(x)=\lambda u(x),\quad x\in \Omega.
\end{equation}
It is shown in \cite[Theorem 2.2]{Ngai-Ouyang_2024} that there exists an orthonormal basis $\{\phi_{n}\}_{n=1}^{\infty}$ of $L^{2}(\Omega,\mu)$ consisting of (Dirichlet) eigenfunctions of $-\Delta_{\mu}$.
The eigenvalues $\{\lambda_{n}\}_{n=1}^{\infty}$ satisfy $0\leq\lambda_{1}\leq\lambda_{2}\leq\cdots.$ Throughout this paper, we let $\lambda_{0}:=0$ be the eigenvalue $0$ of $-\Delta_{\mu}$ if it exists.

Note that when $d\geq2$, the spaces $W_{0}^{1,2}(\Omega)$ and ${\rm dom}(\Delta_{\mu})$ may contain discontinuous functions. Suppose an eigenfunction $u$ of $-\Delta_{\mu}$ belongs to $C(\Omega)$. We define the {\em nodal set} of $u$ as
\begin{equation}\label{nodalset}
\mathcal{Z}_{\mu}(u):=\{x\in \Omega: u(x)=0\}.
\end{equation}
Write $\Omega\setminus\mathcal{Z}_{\mu}(u)=\bigcup_{i}\Omega_{i}$, where $\Omega_{i}$ is open and connected. Each $\Omega_{i}$ is called a {\em nodal domain} of $u$. Note that $\Omega_{i}\cap\Omega_{j}=\emptyset$ for any $i\neq j$. Let
\begin{equation}\label{nodalD}
\mathcal{N}_{\mu}(u):=\text{the~number~of~nodal~domains~of}~u.
\end{equation}

Let $M$ be a complete smooth manifold and $\Omega\subseteq M$ be a bounded domain. Let $\mu$ be a positive finite Borel measure on $M$ with ${\rm supp}(\mu)\subseteq \overline{\Omega}$ and $\mu(\Omega)>0$. We prove the maximum principle for $\mu$-subharmonic functions (see Section 3). We use the Rayleigh quotient and the maximum principle of $\mu$-subharmonic functions to prove the following Courant nodal domain theorem.

\begin{theo}\label{Courant2}
Let $d\geq1$, $M$ be a complete smooth Riemannian $d$-manifold and $\Omega\subseteq M$ be a bounded domain with boundary $\partial\Omega$. Let $\mu$ be a positive finite Borel measure on $M$
satisfying ${\rm supp}(\mu)\subseteq\overline{\Omega}$, $\mu(\Omega)>0$ and $\underline{\dim}_{\infty}(\mu)>d-2$.
Assume that the eigenvalues $\{\lambda_{n}\}_{n=1}^{\infty}$ of $-\Delta_{\mu}$ are arranged in an increasing order and
let $u_{n}(x)$ be a $\lambda_{n}$-eigenfunction. Assume that $u_{n}$ is continuous on $\overline{\Omega}$ and that $\mathcal{N}_{\mu}(u_{n})$ is defined as in (\ref{nodalD}).
\begin{enumerate}
\item[(a)]~If $\partial \Omega\neq\emptyset$, then $\mathcal{N}_{\mu}(u_{n})\leq n$.

\item[(b)]~If $\partial \Omega=\emptyset$, then $\mathcal{N}_{\mu}(u_{n})\leq n+1$.
\end{enumerate}
\end{theo}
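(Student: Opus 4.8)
My plan is to run the classical Courant--Fischer (min--max) argument inside the variational framework of the form $\E$, with the maximum principle for $\mu$-subharmonic functions (Theorem \ref{thmmax}) taking over the role that unique continuation plays for the Laplace--Beltrami operator. Consider part (a) first; assume $\partial\Omega\neq\emptyset$ and, for contradiction, that $\mathcal{N}_{\mu}(u_{n})\geq n+1$. Pick $n+1$ distinct nodal domains $D_{1},\dots,D_{n+1}$ of $u_{n}$ and set $v_{i}:=u_{n}\mathbf{1}_{D_{i}}$. The first point is that $v_{i}\in\F={\mathrm{dom}}(\E)$, and this is exactly where the hypothesis ``$u_{n}$ continuous on $\overline{\Omega}$'' enters: since $u_{n}$ is continuous, vanishes on $\mathcal{Z}_{\mu}(u_{n})$ and vanishes on $\partial\Omega$, each $v_{i}$ is a genuine truncation of $u_{n}\in W_{0}^{1,2}(\Omega)$, hence lies in $W_{0}^{1,2}(\Omega)$, and one checks via (MPI) that it stays in the closed subspace $\F$. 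Because $v_{1},\dots,v_{n+1}$ have pairwise disjoint supports, $\E(v_{i},v_{j})=0$ for $i\neq j$; and testing the weak eigenvalue identity $\E(u_{n},v)=\lambda_{n}\langle u_{n},v\rangle_{L^{2}(\Omega,\mu)}$ (valid for all $v\in\F$) against $v=v_{i}$ and using $u_{n}\equiv v_{i}$ on $\mathrm{supp}(v_{i})$ gives $\E(v_{i},v_{i})=\lambda_{n}\|v_{i}\|_{L^{2}(\Omega,\mu)}^{2}$. Hence on the $(n+1)$-dimensional space $V:=\mathrm{span}\{v_{1},\dots,v_{n+1}\}\subseteq\F$ the Rayleigh quotient $\E(v,v)/\|v\|_{L^{2}(\Omega,\mu)}^{2}$ is identically $\lambda_{n}$.

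Next I combine linear algebra with the variational characterization of the eigenvalues. There is a nonzero $w=\sum_{i=1}^{n+1}c_{i}v_{i}\in V$ that is $L^{2}(\Omega,\mu)$-orthogonal to $\phi_{1},\dots,\phi_{n-1}$; since the solution space of these $n-1$ conditions inside $V$ has dimension at least $2$, I may in addition impose $c_{n+1}=0$, so that $w=\sum_{i=1}^{n}c_{i}v_{i}$ vanishes identically on the nonempty open set $D_{n+1}$. The Rayleigh quotient of $w$ equals $\lambda_{n}$, which by Courant--Fischer is the minimum of the Rayleigh quotient over $\F\cap\{\phi_{1},\dots,\phi_{n-1}\}^{\perp}$; thus $w$ is a constrained minimizer, so $w\in\mathrm{dom}(\Delta_{\mu})$ with $-\Delta_{\mu}w=\lambda_{n}w+\sum_{j<n}\beta_{j}\phi_{j}$, and pairing with $\phi_{j}$ and using $-\Delta_{\mu}\phi_{j}=\lambda_{j}\phi_{j}$ together with the orthogonality forces every $\beta_{j}=0$. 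So $w$ is a nontrivial $\lambda_{n}$-eigenfunction of $-\Delta_{\mu}$ on $\Omega$ that vanishes on the open set $D_{n+1}$.

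It remains to derive a contradiction, and this is the heart of the argument --- and the step I expect to be the main obstacle, since for the Laplace--Beltrami operator one invokes strong unique continuation, for which there is no analogue for Kre\u{\i}n-Feller operators. Instead I will rely on Theorem \ref{thmmax}. The key observation is that wherever a $\lambda_{n}$-eigenfunction $w$ is nonnegative one has $\Delta w=-\lambda_{n}w\,d\mu\leq0$ in the distributional (with respect to $\mu$) sense, so $w$ is $\mu$-superharmonic there; by the strong minimum principle (the strong form of Theorem \ref{thmmax}: a $\mu$-superharmonic function attaining an interior minimum on a connected open set is constant there) $w$ cannot vanish on a nonempty open subset of any connected component of $\{w\geq0\}^{\circ}$ without vanishing on that whole component, and symmetrically for $\{w\leq0\}^{\circ}$. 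It follows that $\{w>0\}$ and $\{w<0\}$ are each unions of connected components of $\Omega\setminus Z_{w}$, where $Z_{w}=\{w=0\}\setminus\mathrm{int}\{w=0\}$, and that each nodal domain of $w$ in fact coincides with one of the $D_{i}$, on which $w|_{D_{i}}=c_{i}u_{n}|_{D_{i}}$ is one-signed. Using that $\Omega$ is connected, that $w\not\equiv0$, and that $w$ vanishes on $D_{n+1}$, one then propagates the vanishing of $w$ across $\Omega$ --- joining $D_{n+1}$ to a nodal domain of $w$ and applying the strong maximum principle to the sign-definite regions encountered along the way --- to conclude $w\equiv0$, the desired contradiction. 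Hence $\mathcal{N}_{\mu}(u_{n})\leq n$.

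For part (b), $\partial\Omega=\emptyset$ and the constant function is an eigenfunction with eigenvalue $0$, orthogonal in $L^{2}(\Omega,\mu)$ to every $\lambda_{n}$-eigenfunction with $\lambda_{n}>0$. The argument above then applies verbatim after replacing the orthogonality conditions against $\phi_{1},\dots,\phi_{n-1}$ by orthogonality against the constant function and $\phi_{1},\dots,\phi_{n-1}$ (equivalently, working in the orthogonal complement of the constants); this costs exactly one additional degree of freedom and yields $\mathcal{N}_{\mu}(u_{n})\leq n+1$, in agreement with the Laplace--Beltrami case \cite[Theorem 6.2]{Schoen-Yau_1994} and with Cheng's result \cite{Cheng_1976}.
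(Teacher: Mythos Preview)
Your argument takes a different route from the paper's, and the step you yourself flag as ``the main obstacle'' is indeed a genuine gap. The paper does \emph{not} argue by contradiction and needs no unique-continuation or propagation step. Given that $u_n$ has $m$ nodal domains $\Omega_1,\ldots,\Omega_m$, it sets $w_j:=u_n\chi_{\Omega_j}$, forms $w=\sum_{j=1}^m c_j w_j$, observes (exactly as you do) that $R_\mu(w)=\lambda_n$ for every nonzero choice of coefficients, and then chooses the $m$ coefficients so that the $m-1$ linear conditions $(w,u_i)_{L^2(\Omega,\mu)}=0$, $i=1,\ldots,m-1$, hold. Lemma~\ref{lemRu} immediately gives $\lambda_m\le R_\mu(w)=\lambda_n$, hence $m\le n$. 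That is the whole argument for the upper bound; Theorem~\ref{thmmax} is invoked only for the preliminary fact that $u_1$ never vanishes on $\Omega$, not for the nodal-domain count itself. Part~(b) is identical with one extra orthogonality condition against the constant eigenfunction.

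Your version instead manufactures a $\lambda_n$-eigenfunction $w$ that vanishes identically on the open set $D_{n+1}$ and then tries to force $w\equiv 0$ via the maximum principle. This propagation is not justified as written: the strong minimum principle tells you $w\equiv 0$ on the connected component of $D_{n+1}$ inside $\{w\ge 0\}^\circ$, but that component may well be $D_{n+1}$ itself --- for instance if points of $\partial D_{n+1}\cap\Omega$ lie in the closures of nodal domains $D_i,D_j$ with $c_i u_n|_{D_i}$ and $c_j u_n|_{D_j}$ of opposite signs --- and you give no mechanism that enlarges the zero set across such boundary points. In effect you are attempting a unique-continuation theorem for Kre\u{\i}n--Feller eigenfunctions from the maximum principle alone; the paper proves no such result, and for singular $\mu$ there is no reason to expect one. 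Replace the contradiction with the direct min--max argument: use all $m$ nodal pieces, impose orthogonality to the first $m-1$ (not $n-1$) eigenfunctions, and read off $\lambda_m\le\lambda_n$ directly.
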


As the definition of nodal sets makes sense only if the corresponding eigenfunction is continuous, we study the continuity of eigenfunctions of $-\Delta_{\mu}$ on $\Omega$. Note that when $d=1$, the Sobolev space $W_{0}^{1,2}(\Omega)$ is embedded in $C(\overline{\Omega})$. Hence, the eigenfunctions of $-\Delta_{\mu}$ are continuous on $\Omega$. We need only consider the case $d\geq2$.

Let $d\geq2$ and $M$ be a smooth complete Riemannian $d$-manifold. Let $\Omega\subseteq M$ be a bounded domain with smooth boundary. In view of the Hopf-Rinow theorem (see e.g. \cite[Theorem 5.7.1]{Petersen_2006}), $\overline{\Omega}$ is compact. It is known that if $\overline{\Omega}$ is a Riemannian manifold with a nonempty smooth boundary, then the (Dirichlet) Green function (see Section 5 or \cite{Grigoryan_1999,Li-Tam_1987} for the definition) of the Laplace-Beltrami operator exists on $\Omega$ (see, e.g., \cite[Theorem 4.17]{Aubin_1982}). Examples of such domains include a bounded domain $\Omega$ of a complete noncompact smooth Riemannian manifold with $\partial\Omega$ being smooth, and a bounded domain $\Omega$ of a compact smooth Riemannian manifold $M$ satisfying $M\setminus\overline{\Omega}\neq\emptyset$. Let $\mu$ be a positive finite Borel measure on $M$ with ${\rm supp}(\mu)\subseteq \overline{\Omega}$ and $\mu(\Omega)>0$. Assume $\underline{\dim}_{\infty}(\mu)>d-2$. We first consider the inverse operator of $-\Delta_{\mu}$ on $L^{2}(\Omega,\mu)$. We define a bounded linear operator $G_{\mu}$ (definition see Section 5) on $L^{2}(\Omega,\mu)$, called the {\em Green operator}. We prove that the inverse operator of $-\Delta_{\mu}$ defined on $\Omega$ can be decomposed as the sum of the Green operator and a harmonic operator.

We apply the Green operator to study the continuity of eigenfunctions. By using the properties of Green's function (see Proposition \ref{pg}) and the methods in \cite[Theorem 1.2]{Ngai-Zhang-Zhao_2024}, we obtain the following theorem.

\begin{theo}\label{thcon}
Let $d\geq2$, $M$ be a smooth complete Riemannian $d$-manifold and $\Omega\subseteq M$ be a bounded domain with smooth boundary and on which Green's function exists. Let $\mu$ be a positive finite Borel measure on $M$ with {\rm supp}$(\mu)\subseteq\overline{\Omega}$ and $\mu(\Omega)>0$. Assume $\underline{\dim}_{\infty}(\mu)>d-2$ and $\underline{\dim}_{\infty}(\nu)>d-2$. Then the eigenfunctions of $-\Delta_{\mu}$ are continuous on $\Omega$.
\end{theo}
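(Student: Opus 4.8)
The plan is to reduce the continuity question to the regularity properties of the Green operator $G_\mu$, exactly as in the Euclidean case of \cite{Ngai-Zhang-Zhao_2024}, the new input being the decomposition of $(-\Delta_\mu)^{-1}$ furnished by Theorem \ref{thGmu}. Let $u$ be a $\lambda$-eigenfunction of $-\Delta_\mu$. If $\lambda=0$ then (on the domain with boundary) $u\equiv0$ and there is nothing to prove; so assume $\lambda>0$. Then $u=\lambda\,(-\Delta_\mu)^{-1}u$, and by Theorem \ref{thGmu} we may write $(-\Delta_\mu)^{-1}u = G_\mu u + h$, where $G_\mu u(x)=\int_\Omega \mathcal{G}(x,y)u(y)\,d\mu(y)$ with $\mathcal{G}$ the Dirichlet Green's function of $-\Delta$ on $\Omega$, and $h$ is the harmonic function on $\Omega$ matching the appropriate boundary data; $h$ is smooth in $\Omega$ by interior elliptic regularity, hence continuous there. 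Thus it suffices to show that $G_\mu f\in C(\Omega)$ whenever $f\in L^2(\Omega,\mu)$; applying this with $f=u\in L^2(\Omega,\mu)$ then gives $u=\lambda(G_\mu u + h)\in C(\Omega)$.

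The heart of the argument is therefore the continuity of $G_\mu f$, and here I would run the iteration/bootstrap scheme of \cite[Theorem 1.2]{Ngai-Zhang-Zhao_2024}. First I would record the local behaviour of the Riemannian Green's function: near the diagonal $\mathcal{G}(x,y)$ is comparable to the Euclidean fundamental solution, i.e. $|\mathcal{G}(x,y)|\lesssim \rho(x,y)^{2-d}$ for $d\ge3$ and $|\mathcal{G}(x,y)|\lesssim |\ln\rho(x,y)|$ for $d=2$, with analogous bounds on $\nabla_x\mathcal{G}$, where $\rho$ is the geodesic distance; these are the contents of Proposition \ref{pg}. Combining such a bound with the hypothesis $\underline{\dim}_\infty(\mu)>d-2$ (which controls $\mu(B^M_\delta(x))\lesssim \delta^{s}$ for some $s>d-2$) and a dyadic annular decomposition $\{y:2^{-k-1}\le\rho(x,y)<2^{-k}\}$ of a geodesic ball around $x$, one gets that $\sup_x\int_\Omega |\mathcal{G}(x,y)|\,d\mu(y)<\infty$, and moreover an equicontinuity-type estimate: $\int_\Omega|\mathcal{G}(x,y)-\mathcal{G}(x',y)|\,d\mu(y)\to0$ as $\rho(x,x')\to0$, uniformly on compact subsets of $\Omega$. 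Because $f\in L^2(\Omega,\mu)\subseteq L^1(\Omega,\mu)$ (finite measure), Hölder then shows $x\mapsto G_\mu f(x)$ is continuous on $\Omega$. A cleaner route, which is the one I would actually write, is the standard layer-cake/Marcinkiewicz argument: show $G_\mu$ maps $L^2(\Omega,\mu)$ into a Hölder or Morrey-type space, or directly approximate $f$ in $L^2(\Omega,\mu)$ by bounded functions $f_n$, observe $G_\mu f_n\to G_\mu f$ uniformly (from the uniform bound on $\int|\mathcal{G}(x,\cdot)|\,d\mu$), and prove $G_\mu f_n\in C(\Omega)$ for bounded $f_n$ via dominated convergence using the pointwise continuity of $\mathcal{G}(\cdot,y)$ together with the integrable majorant $\|f_n\|_\infty|\mathcal{G}(\cdot,y)|$. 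Uniform limits of continuous functions being continuous, $G_\mu f\in C(\Omega)$.

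The one place where the hypothesis $\underline{\dim}_\infty(\nu)>d-2$ enters is in invoking Theorem \ref{thGmu}: that theorem requires uniqueness of the weak solution of the Dirichlet problem with density $\mu$ in $W_0^{1,2}(\Omega)$, and the relevant lower $L^\infty$-dimension condition there is phrased for the measures at hand; in the present setting the Riemannian volume measure $\nu$ plays a role (e.g. in the embedding $\mathcal{F}\hookrightarrow L^2(\Omega,\mu)$ and in comparing $-\Delta u = f\,d\mu$ distributionally against test functions integrated against $d\nu$), so one needs $\underline{\dim}_\infty(\nu)>d-2$ to guarantee that the harmonic-operator part of the decomposition is well defined and that the inversion $u=\lambda(-\Delta_\mu)^{-1}u$ is legitimate. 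I expect the main obstacle to be precisely the transfer of the Euclidean Green-function estimates of \cite{Ngai-Zhang-Zhao_2024} to the manifold: one must check that the local comparison of $\mathcal{G}$ with the Euclidean kernel is uniform over a compact exhaustion of $\Omega$, that the curvature terms appearing when one passes to normal coordinates (as in the charts of \cite{Ngai-Ouyang_2024}) do not spoil the annular summation, and that the bound degrades gracefully near $\partial\Omega$ — but since we only claim continuity on $\Omega$ (not up to $\overline\Omega$), the near-boundary issue is handled by working on compact subsets, and the curvature corrections are lower-order and absorbed into the constants. Once Proposition \ref{pg} is in hand, the remaining steps are routine adaptations of \cite[Theorem 1.2]{Ngai-Zhang-Zhao_2024}.
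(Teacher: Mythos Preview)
Your overall architecture matches the paper exactly: invoke Theorem \ref{thGmu} to write $u=\lambda(G_\mu u+h_\mu(u))$, observe $h_\mu(u)$ is smooth (harmonic), and reduce to showing $G_\mu u\in C(\Omega)$. The paper does precisely this. Where your proposal diverges, and acquires a genuine gap, is in the proof that $G_\mu f$ is continuous for $d\ge 3$.

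Your ``cleaner route'' approximates $f\in L^2(\Omega,\mu)$ by bounded $f_n$ and claims $G_\mu f_n\to G_\mu f$ uniformly ``from the uniform bound on $\int|\mathcal G(x,\cdot)|\,d\mu$''. That bound only yields $\|G_\mu g\|_\infty\lesssim\|g\|_\infty$, not $\|G_\mu g\|_\infty\lesssim\|g\|_{L^2(\mu)}$. To get uniform convergence from $L^2$-convergence you would need $\sup_x\|\mathcal G(x,\cdot)\|_{L^2(\mu)}<\infty$, i.e.\ $\sup_x\int\rho(x,y)^{2(2-d)}\,d\mu(y)<\infty$; by the dyadic-shell computation this requires $\underline{\dim}_\infty(\mu)>2(d-2)$, which is strictly stronger than the hypothesis $\underline{\dim}_\infty(\mu)>d-2$ once $d\ge 3$. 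The same obstruction kills your equicontinuity variant: $\|\mathcal G(x,\cdot)-\mathcal G(x',\cdot)\|_{L^2(\mu)}$ need not be finite, so H\"older against $f\in L^2(\mu)$ gives nothing.

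The paper circumvents this with a weighted Cauchy--Schwarz trick. It first proves (Lemma \ref{lem3}) that $G_\mu f^2$ is \emph{bounded} when $f\in{\rm dom}(-\Delta_\mu)$, by approximating $f$ in $L^2(\mu)$ and controlling the near-diagonal sum $\sum_k 2^{k(d-2)}\int_{Q_k}|f^2-f_n^2|\,d\mu$ directly. Then
\[
|G_\mu f(x)|^2\le \Big(\int G_y(x)f^2(y)\,d\mu\Big)\Big(\int G_y(x)\,d\mu\Big)=G_\mu f^2(x)\cdot\overline C
\]
gives boundedness of $G_\mu f$ (Proposition \ref{bounded}). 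For continuity (Proposition \ref{continuous}) the paper splits \emph{spatially}, $f=f\chi_{B_{\tilde r}(z)}+f\chi_{B_{\tilde r}(z)^c}$: the second piece is handled by dominated convergence since $\mathcal G(\cdot,y)$ is continuous away from the diagonal, while the first piece is made uniformly small using the same weighted Cauchy--Schwarz together with $\int_{B_{\tilde r}(z)}G_y(x)\,d\mu\to 0$. This is the step your outline is missing; the ``bootstrap'' you allude to from \cite{Ngai-Zhang-Zhao_2024} is exactly this $G_\mu f^2$ device, not the simpler truncation argument you wrote out.

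A minor point: your explanation of where $\underline{\dim}_\infty(\nu)>d-2$ enters is off. It is not used for uniqueness in the Dirichlet problem (that needs only $\underline{\dim}_\infty(\mu)>d-2$, via Proposition \ref{prop5}); the paper invokes it in Proposition \ref{propLp} to get $\sup_y\int G_y(x)\,d\nu(x)<\infty$ and hence $G_\mu f\in L^p(\Omega,\nu)$.
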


As in the proof of Theorem \ref{thcon}, we define Green's operator $G_{\mu}$ on a smooth compact connected Riemannian manifold and prove that the restriction of $G_{\mu}$ on a subspace of $L^{2}(M,\mu)$ is the inverse operator of the restriction of $\Delta_{\mu}$ on a subspace of {\rm dom}$(\Delta_{\mu})$ (see Theorem \ref{OG}). We study the continuity of eigenfunctions of $\Delta_{\mu}$ and obtain the following theorem.
\begin{thm}\label{CCT}
Let $d\geq2$ and $M$ be a compact connected smooth Riemannian $d$-manifold with $\partial M=\emptyset$. Let $\mu$ be a positive finite Borel measure on $M$. Assume that $\underline{\dim}_{\infty}(\mu)>d-2$. Then the eigenfunctions of $\Delta_{\mu}$ are continuous on $M$.
\end{thm}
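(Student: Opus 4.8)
The plan is to mirror the strategy used for Theorem \ref{thcon} (the boundary case), adapting the Green operator construction to a closed manifold $M$. The key structural obstacle on a compact manifold without boundary is that the Laplace–Beltrami operator $\Delta$ has a nontrivial kernel (the constants), so the Green's function does not exist in the classical sense; instead one must work with the \emph{pointed} Green's function $G(x,y)$ characterized by $-\Delta_y G(x,\cdot) = \delta_x - \nu(M)^{-1}$ together with the normalization $\int_M G(x,y)\,d\nu(y) = 0$, which exists and is smooth off the diagonal with the same $|d(x,y)|^{2-d}$ (or logarithmic, when $d=2$) singularity as in the local theory. Accordingly I would first restrict attention to the closed subspace $L^2_0(M,\mu) := \{f \in L^2(M,\mu) : \int_M f\,d\mu = 0\}$, which is the orthogonal complement of the $\lambda_0 = 0$ eigenspace of $-\Delta_\mu$, and define
\begin{equation}\label{gomu}
(G_\mu f)(x) := \int_M G(x,y) f(y)\,d\mu(y), \qquad f \in L^2_0(M,\mu).
\end{equation}
Using $\underline{\dim}_\infty(\mu) > d-2$ and Strichartz-type estimates on $\sup_x \mu(B^M_\delta(x))$, one checks that the singular kernel $G(x,y)$ is $\mu\otimes\mu$-integrable in a uniform way, so $G_\mu$ is a bounded (in fact compact) operator on $L^2_0(M,\mu)$; this is the content of the analogue of Proposition \ref{pg} and Theorem \ref{OG} referenced in the excerpt.

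Next I would establish that $G_\mu$ is exactly the inverse of $-\Delta_\mu$ restricted to the appropriate subspace: if $-\Delta_\mu u = f$ with $f \in L^2_0(M,\mu)$ and $u$ normalized by $\int_M u\,d\mu = 0$, then by the distributional characterization quoted after \eqref{form} — namely $\int_M \langle \nabla u, \nabla v\rangle\,d\nu = \int_M f v\,d\mu$ for all $v \in C^\infty(M)$ — together with $-\Delta_y G(x,y) = \delta_x - \nu(M)^{-1}$ and Fubini, one deduces $u(x) = (G_\mu f)(x) + c$ for a constant $c$, and the normalization forces $c = 0$. Conversely $G_\mu f$ lies in $W^{1,2}(M)$ and solves the weak equation. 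This identifies each $\lambda_n$-eigenfunction $u_n$ with $n \geq 1$ as a fixed point: $u_n = \lambda_n G_\mu u_n$, modulo the constant eigenfunction which is trivially continuous.

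Finally, to upgrade from $L^2(M,\mu)$ membership to continuity I would run the bootstrapping argument from \cite[Theorem 1.2]{Ngai-Zhang-Zhao_2024}: one shows that $G_\mu$ maps $L^2(M,\mu)$ into $C(M)$ by proving that $x \mapsto \int_M G(x,y) f(y)\,d\mu(y)$ is continuous, which follows from the uniform integrability of the kernel and an equicontinuity/dominated-convergence estimate exploiting that $G(x,y) - G(x',y) \to 0$ in $L^1(d\mu(y))$ as $x' \to x$ — here again $\underline{\dim}_\infty(\mu) > d-2$ is what makes the tail near the diagonal uniformly small. Since $u_n = \lambda_n G_\mu u_n \in C(M)$, continuity of all eigenfunctions follows. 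The main obstacle I anticipate is the first bulleted point made rigorous: controlling the diagonal singularity of $G(x,y)$ in the $\mu$-integral uniformly in $x$ on a manifold, where one cannot simply invoke Euclidean potential estimates but must instead patch together normal coordinate charts (as in \cite{Ngai-Ouyang_2024}) and use the comparability of $G(x,y)$ with the Euclidean Newtonian kernel on small geodesic balls, together with Gaussian heat-kernel bounds to handle the region away from the diagonal where the compensating constant $-\nu(M)^{-1}$ becomes relevant. Once that estimate is in hand, the rest is a faithful transcription of the boundary-case proof.
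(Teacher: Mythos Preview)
Your proposal is correct and follows essentially the same route as the paper: restrict to the mean-zero subspace $\widetilde{\mathcal{H}}_\mu = L^2_0(M,\mu)$, use the compact-manifold Green function satisfying $\Delta G_y = \delta_y - \nu(M)^{-1}$, prove that $G_\mu|_{\widetilde{\mathcal{H}}_\mu}$ inverts $\Delta_\mu$ on the appropriate domain (this is the paper's Theorem~\ref{OG}), deduce the fixed-point identity $u_n = \lambda_n G_\mu u_n$ for nonconstant eigenfunctions, and then transplant the continuity argument of Proposition~\ref{continuous}. One simplification relative to what you anticipate: the paper does not patch normal charts or invoke heat-kernel bounds to control $G_y(x)$; instead it cites Aubin's global estimates (Theorem~\ref{pg2}(c)), which already give $|G_y(x)| \le \widehat{C}\rho(x,y)^{2-d}$ (or the logarithmic bound when $d=2$) uniformly on $M\times M$, so the uniform $\mu$-integrability near the diagonal follows exactly as in the bounded-domain case.
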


\section{Maximum Principle}
Let $M$ be a complete smooth Riemannian manifold and $\Omega\subseteq M$ be a bounded domain. Let $\mu$ be a finite positive Borel measure defined on $M$ with {\rm supp}$(\mu)\subseteq\overline{\Omega}$ and $\mu(\Omega)>0$. In this section, we study the maximum principle for $\mu$-subharmonic functions (see Definition \ref{musub}) on $\Omega$.

We first introduce normal coordinate charts of $M$. Based on these charts, we can map a geodesic ball in $M$ with sufficiently small radius to $\mathbb{R}^{d}$ by a diffeomorphism. The following method of constructing normal coordinate charts is from \cite{Ngai-Ouyang_2024}. The reason we need to use the complicated construction of normal coordinate charts $\{(U_{i},\varphi_{i})\}_{i}$ in \cite{Ngai-Ouyang_2024} instead of the standard one is that we can choose the size of $U_{i}$ so that $\overline{\Omega}$ can be covered by a family of subset of $U_{i}$.

Let $\rho(x,y)$ be the Riemannian distance between points $x,y\in M$. Let $p\in M$ and $T_{p}M$ be the tangent space of $M$ at $p$. For $r>0$, let
\begin{align*}
&B_{r}(x):=\{y\in\mathbb{R}^{d}: |x-y|<r\},\quad x\in\mathbb{R}^{d},\\
&B^{M}_{r}(p):=\{q\in M: \rho(x,y)<r\},\quad p\in M,\\
&B^{T_{p}M}_{r}(0):=\{\zeta\in T_{p}(M):|\zeta|<r\}.
\end{align*}
For any $p\in M$ and every orthonormal basis $(b_{i})$ of $T_{p}M$, there is a basis isomorphism
\begin{equation}\label{Ep}
E_{p}:T_{p}M\to\mathbb{R}^{d}.
\end{equation}

Let $i\in\mathbb{N}^{+}$. For each $p_{i}\in \overline{\Omega}$, let $\epsilon_{i}:=\epsilon_{p_{i}}\in(0,{\rm inj}(\overline{\Omega}))$. Since $\overline{\Omega}$ is compact, there exists a finite open cover $\{B^{M}_{\epsilon_{i}}(p_{i})\}_{i=1}^{N}$ of  $\overline{\Omega}$, where $p_{i}\in\overline{\Omega}$ and each $B^{M}_{\epsilon_{i}}(p_{i})$ is a geodesic ball. For each $i=1,\ldots,N$, the exponential map $\exp_{p_{i}}: B^{T_{p_{i}}M}_{\epsilon_{i}}(0)\to B^{M}_{\epsilon_{i}}(p_{i})$ is a diffeomorphism. Let each $E_{p_{i}}:T_{p_{i}}M\to \mathbb{R}^{d}$ be defined as in (\ref{Ep}) and denote $B_{\epsilon_{i}}(0):=E_{p_{i}}(B^{T_{p_{i}}}_{\epsilon_{i}}(0))$. Let $S_{i}:B_{\epsilon_{i}}(0)\to B_{\epsilon_{i}}(z_{i})$ such that the collection $B_{\epsilon_{i}}(z_{i})$, where $z_{i}\in\mathbb{R}^{d}$, are disjoint. We now define a family of normal coordinate maps:
\begin{equation}\label{NCM}
\varphi_{i}:=S_{i}\circ E_{p_{i}}\circ\exp^{-1}_{p_{i}}: B^{M}_{\epsilon_{i}}(p_{i})\to B_{\epsilon_{i}}(z_{i}),\quad i=1,\ldots,N,
\end{equation}
where $\varphi_{i}(p_{i})=z_{i}$ and the sets $\varphi_{i}(B^{M}_{\epsilon_{i}}(p_{i}))$ are disjoint. For convenience, we let $U_{i}:=B^{M}_{\epsilon_{i}}(p_{i})$ and $\widetilde{U}_{i}:=B_{\epsilon_{i}}(z_{i})$. Therefore, the system of normal coordinate charts of $\Omega$ is given by $\{(U_{i},\varphi_{i})\}_{i=1}^{N}$, where each $\varphi_{i}$ is a diffeomorphism.

Let $\mu$ be a positive finite Borel measure with supp$(\mu)\subseteq\overline{\Omega}$. Let $\{(U_{i},\varphi_{i})\}_{i=1}^{N}$ be normal coordinate charts defined as above with $\varphi_{i}$ satisfying (\ref{NCM}). Then for each $i=1,\ldots N$, $\varphi_{i}$ induces a measure
\begin{equation}\label{m1}
\widetilde{\mu}_{i}:=\mu\circ\varphi_{i}^{-1}
\end{equation}
on $\widetilde{U}_{i}$ and each $\widetilde{\mu}_{i}$ satisfies supp$(\widetilde{\mu}_{i})\subseteq\overline{\widetilde{U_{i}}}$. Let $\tau$ be a positive finite Borel measure with compact support on $\mathbb{R}^{d}$. Assume ${\rm supp}(\tau)\subseteq\overline{\widetilde{U}}$ for some bounded domain $\widetilde{U}\subseteq\mathbb{R}^{d}$. We say that $\tau$ satisfies (MPI) on $\widetilde{U}$ if there exists a constant $C>0$ such that for any $\widetilde{u}\in C_{c}^{\infty}(\widetilde{U})$,
$$\int_{\widetilde{U}}|\widetilde{u}|^{2}\,d\tau\leq C\int_{\widetilde{U}}|\nabla\widetilde{u}|^{2}\,dx.$$

The proof of the following proposition is straightforward and is omitted.
\begin{prop}\label{mpi}
Let $d$, $M$ and $\Omega$ be as in Theorem \ref{Courant2}. Let $\mu$ be a positive finite Borel measure on $M$ with {\rm supp}$(\mu)\subseteq \overline{\Omega}$ and $\mu(\Omega)>0$. Let $\widetilde{\mu}_{i}$ be defined as (\ref{m1}), where $i=1,\ldots,N$. Assume that $\mu$ satisfies (MPI) on $M$. Then $\widetilde{\mu}_{i}$ satisfies (MPI) on $\widetilde{U_{i}}$.
\end{prop}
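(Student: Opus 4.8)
The plan is to transfer the Poincaré-type inequality (MPI) from $M$ to the chart image $\widetilde U_i$ by pulling test functions back and forth through the diffeomorphism $\varphi_i$, controlling the distortion of both the measure and the Dirichlet energy by the smoothness of $\varphi_i$ on the relatively compact set $U_i$. First I would fix $i$ and let $\widetilde u\in C_c^\infty(\widetilde U_i)$ be arbitrary. Since $\varphi_i:U_i\to\widetilde U_i$ is a diffeomorphism, the composition $u:=\widetilde u\circ\varphi_i$ is a smooth function on $U_i$ with compact support contained in $U_i\subseteq\Omega$; extending by zero, $u\in C_c^\infty(\Omega)$, so the hypothesis (MPI) on $M$ applies to $u$, giving $\int_\Omega |u|^2\,d\mu\le C\int_\Omega|\nabla u|^2\,d\nu$.

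Next I would rewrite each side of this inequality as an integral over $\widetilde U_i$. For the left-hand side, the change of variables $x=\varphi_i(q)$ together with the definition $\widetilde\mu_i=\mu\circ\varphi_i^{-1}$ in (\ref{m1}) gives $\int_\Omega|u|^2\,d\mu=\int_{U_i}|\widetilde u\circ\varphi_i|^2\,d\mu=\int_{\widetilde U_i}|\widetilde u|^2\,d\widetilde\mu_i$, using that $\mathrm{supp}(u)\subseteq U_i$ and $\mathrm{supp}(\widetilde\mu_i)\subseteq\overline{\widetilde U_i}$. For the right-hand side I would use the chain rule: in local coordinates, $\nabla u$ at $q$ is obtained from $\nabla\widetilde u$ at $\varphi_i(q)$ via the inverse of the differential $d\varphi_i$, and the Riemannian volume $d\nu$ corresponds to $\sqrt{\det g}\,dx$ in the coordinates. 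Since $\overline{U_i}$ is compact and $\varphi_i$ is a diffeomorphism onto $\overline{\widetilde U_i}$, the quantities $\|d\varphi_i^{-1}\|$, $\|d\varphi_i\|$, and the metric density $\sqrt{\det g}$ are all bounded above and below by positive constants on $\overline{U_i}$; hence there is a constant $C_i>1$ with $C_i^{-1}|\nabla\widetilde u(\varphi_i(q))|^2\le |\nabla u(q)|_g^2\le C_i|\nabla\widetilde u(\varphi_i(q))|^2$ and a comparable bound between $d\nu$ and $dx$. Combining these, $\int_\Omega|\nabla u|^2\,d\nu\le C_i'\int_{\widetilde U_i}|\nabla\widetilde u|^2\,dx$ for some constant $C_i'$ depending only on $\varphi_i$ and $g|_{\overline{U_i}}$. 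Chaining the three displays yields $\int_{\widetilde U_i}|\widetilde u|^2\,d\widetilde\mu_i\le C C_i'\int_{\widetilde U_i}|\nabla\widetilde u|^2\,dx$, which is (MPI) for $\widetilde\mu_i$ on $\widetilde U_i$ with constant $CC_i'$.

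The only point requiring care — and the reason the authors call the verification "straightforward" rather than trivial — is making sure the support conditions line up so that the pull-back of a test function on $\widetilde U_i$ is genuinely an admissible test function on all of $\Omega$, and that the comparison constants for $\nabla$ and $d\nu$ are uniform; both follow from the compactness of $\overline{U_i}=\overline{B^M_{\epsilon_i}(p_i)}$ inside the chart and from $\varphi_i$ being a global diffeomorphism on it, as set up in (\ref{NCM}). No genuine obstacle arises, which is why the statement is recorded as a proposition with the proof omitted; I would simply include the change-of-variables computation above if a proof were wanted.
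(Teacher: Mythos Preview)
Your approach is correct and is exactly the natural change-of-variables argument the authors have in mind when they write ``the proof of the following proposition is straightforward and is omitted.'' Since the paper gives no proof at all, there is nothing further to compare; your pull-back of a test function through the diffeomorphism $\varphi_i$, together with the uniform comparability of $|\nabla u|_g^2\,d\nu$ and $|\nabla\widetilde u|^2\,dx$ coming from compactness of $\overline{U_i}$, is precisely the intended verification.
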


The following lemma can be derived by modifying the proofs of \cite[Proposition 3.2 and Theorem 3.4]{Ngai-Ouyang_2024}; we will omit some details.
\begin{lem}\label{Lemma1}
Let $V\subseteq\R^{d}$ be a bounded domain and $\omega$ be a positive finite Borel measure on $V$ with {\rm supp}$(\omega)\subseteq \overline{V}$ and $\omega(V)>0$. Let $u\in W^{1,2}(V)\cap C(\overline{V})$. Assume $\omega$ satisfies (MPI) on $V$. If for all $v\in C_{c}^{\infty}(V)$, there exists an $\omega$-a.e. nonnegative function $f\in L^{2}(V,\omega)$ such that
\begin{equation}\label{max}
\int_{V}u\Delta v\,dx=\int_{V}f v\,d\omega,
\end{equation}
then $u$ cannot attain its maximum in $V$, unless $u$ is a constant.
\end{lem}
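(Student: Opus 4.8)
The plan is to argue by contradiction, assuming $u$ attains its maximum at some interior point while $u$ is not constant, and to reduce the statement to the classical strong maximum principle for weakly subharmonic functions. The weak identity \eqref{max} says precisely that $\Delta u = f\,d\omega$ in the sense of distributions on $V$; since $f\geq0$ $\omega$-a.e. and $\omega$ is a positive measure, the right-hand side is a nonnegative distribution, i.e.\ $\Delta u \geq 0$ in $\mathcal{D}'(V)$. Thus $u$ is a (distributionally) subharmonic function that additionally lies in $W^{1,2}(V)\cap C(\overline V)$. The heart of the matter is to show that such a $u$ satisfies the strong maximum principle: if $u(x_0)=\max_{\overline V}u$ for some $x_0\in V$, then $u$ is constant on the connected component of $V$ containing $x_0$, hence (since $V$ is a domain) on all of $V$.

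First I would establish the weak \emph{sub-mean-value inequality}: for every ball $B_r(x_0)\Subset V$,
\begin{equation*}
u(x_0)\le \vint_{B_r(x_0)} u\,dx .
\end{equation*}
The standard route is to mollify. Let $\eta_\varepsilon$ be a standard radial mollifier; then $u_\varepsilon := u * \eta_\varepsilon$ is smooth on $V_\varepsilon:=\{x: \mathrm{dist}(x,\partial V)>\varepsilon\}$ and satisfies $\Delta u_\varepsilon = (\Delta u)*\eta_\varepsilon \ge 0$ pointwise there, because convolving the nonnegative measure $f\,d\omega$ with a nonnegative kernel produces a nonnegative smooth function. Hence each $u_\varepsilon$ is a classical smooth subharmonic function and obeys the sub-mean-value inequality on balls compactly contained in $V_\varepsilon$. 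Since $u\in C(\overline V)$, $u_\varepsilon \to u$ locally uniformly as $\varepsilon\to 0^+$, so the inequality passes to the limit and holds for $u$ itself on every ball $B_r(x_0)\Subset V$.

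With the sub-mean-value property in hand, the strong maximum principle is routine: if $u$ attains its maximum value $m$ at an interior point $x_0$, then for every small $r$ the inequality $m = u(x_0)\le \vint_{B_r(x_0)} u\,dx \le m$ forces $u\equiv m$ a.e.\ on $B_r(x_0)$, hence everywhere on $B_r(x_0)$ by continuity; thus the set $\{x\in V: u(x)=m\}$ is open, and it is also closed in $V$ by continuity and nonempty by hypothesis, so it equals $V$. Therefore $u$ is constant, contradicting the assumption, and the lemma follows. I expect the only genuinely delicate point to be the mollification/limiting argument used to transfer the mean-value inequality from the smooth approximants $u_\varepsilon$ to $u$ — in particular, verifying that $\Delta u_\varepsilon\ge0$ on $V_\varepsilon$ and that the local uniform convergence $u_\varepsilon\to u$ (which uses only $u\in C(\overline V)$, not the $W^{1,2}$ regularity) is enough to pass to the limit; the hypotheses that $\omega$ satisfies (MPI) and that $u\in W^{1,2}(V)$ are what make \eqref{max} meaningful (so that $f\in L^2(V,\omega)$ and the pairing is well defined), but they play no further role in the maximum-principle argument itself.
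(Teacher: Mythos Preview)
Your argument is correct and follows essentially the same route as the paper's: mollify $u$, use nonnegativity of the distributional Laplacian $\Delta u=f\,d\omega\ge0$ to obtain classically subharmonic approximants, pass the sub-mean-value inequality to the limit via local uniform convergence, and conclude with the standard open-and-closed connectedness argument. The paper's version differs only in a minor technical choice: it first extends $u$ continuously to a larger set $\widehat V\supset V$ via Tietze's theorem (so that the mollification $\overline u^{\delta}$ is defined on all of $V$) and then establishes the limit identity $\lim_{\delta\to0}\int_{B_r(z)}\Delta(\overline u^{\delta})\,dx=\int_{B_r(z)}f\,d\omega\ge0$ directly, deferring the remaining mean-value and connectedness steps to the cited reference. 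Your approach of working on the shrinking sets $V_\varepsilon$ and observing $\Delta u_\varepsilon(x)=\int f(y)\,\eta_\varepsilon(x-y)\,d\omega(y)\ge0$ pointwise is slightly more streamlined, avoids the extension step, and makes the role (or non-role) of the hypotheses $u\in W^{1,2}(V)$ and (MPI) transparent.
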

\begin{proof}
By Tietze's extension theorem (see, e.g., \cite[Theorem 35.1]{Munkres_2000}), there exists some $\widehat{V}\subset\subset\mathbb{R}^{d}$ with $V\subseteq\widehat{V}$ and some $\overline{u}\in C(\overline{\widehat{V}})$ with {\rm supp}$(\overline{u})\subseteq\overline{\widehat{V}}$ such that $\overline{u}=u$ on $V$. For any $\delta>0$, let
$$\overline{u}^{\delta}:=\eta_{\delta}\ast\overline{u},$$
where $\eta_{\delta}\geq0$ is a mollifier. Then $\overline{u}^{\delta}\in C^{\infty}(\widehat{V}_{\epsilon})$ and $\overline{u}^{\delta}\to u$ uniformly on $V$ (see e.g.,\cite[\S C5]{Evans_2010}). Let $z\in V$. Then for any $0<r<{\rm dist}(z,\partial V)$, using an argument as that in the proof of \cite[Proposition 3.2]{Ngai-Zhang-Zhao_2024}, one can prove that
\begin{equation}\label{eqlim}
\lim_{\delta\to0}\int_{B_{r}(z)}\Delta(\overline{u}^{\delta}|_{V})(x)\,dx=\int_{B_{r}(z)}f(x)\,d\omega.
\end{equation}
Using (\ref{eqlim}) and an argument in the proof \cite[Theorem 3.4]{Ngai-Zhang-Zhao_2024} completes the proof (see \cite[Theorem 3.4]{Ngai-Zhang-Zhao_2024} for details).
\end{proof}
We will prove the maximum principle for a continuous $\mu$-subharmonic function. The following definition of $\mu$-subharmonic functions can be found in \cite[Definition 3.1]{Ngai-Zhang-Zhao_2024}.
\begin{defi}\label{musub}
We call $u\in{\rm dom}(-\Delta_{\mu})$ a {\em $\mu$-subharmonic function} if $\Delta_{\mu}u\geq0$ ($\mu$-a.e.). Call $u\in{\rm dom}(-\Delta_{\mu})$ a {\em$\mu$-superharmonic function} if $\Delta_{\mu}u\leq0$ ($\mu$-a.e.).
\end{defi}

The following maximum principle theorem generalize \cite[Theorem 3.4]{Ngai-Zhang-Zhao_2024} to a bounded domain $\Omega\subseteq M$.
\begin{thm}\label{thmmax}
Let $d$, $M$, $\Omega$, and $\mu$ be as in Proposition \ref{mpi} and assume that $\mu$ satisfies (MPI) on $M$. If $u\in C(\overline{\Omega})$ is a nonconstant $\mu$-subharmonic function, then $u$ cannot attain its maximum value in $\Omega$.
\end{thm}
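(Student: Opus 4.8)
The plan is to reduce the global maximum principle on $\Omega\subseteq M$ to the Euclidean version already established in Lemma \ref{Lemma1}, via the normal coordinate charts $\{(U_i,\varphi_i)\}_{i=1}^N$ constructed above. Suppose, for contradiction, that the nonconstant $\mu$-subharmonic function $u\in C(\overline{\Omega})$ attains its maximum $m:=\max_{\overline{\Omega}}u$ at some interior point $x_0\in\Omega$. The set $A:=\{x\in\Omega:u(x)=m\}$ is relatively closed in $\Omega$ and nonempty; I would show it is also open, which (by connectedness of $\Omega$) forces $u\equiv m$ on $\Omega$, hence on $\overline\Omega$ by continuity, contradicting nonconstancy. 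Openness of $A$ is the local statement to be proved, and it is a purely local matter near any point $z\in A$.

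To prove openness near $z\in A$, pick a chart $(U_i,\varphi_i)$ with $z\in U_i$, and work in the Euclidean image $\widetilde U_i=B_{\epsilon_i}(z_i)$. Set $\widetilde u:=u\circ\varphi_i^{-1}$ on a slightly smaller Euclidean ball $V\subset\subset\widetilde U_i$ with $\varphi_i(z)\in V$; then $\widetilde u\in C(\overline V)$, and since $u\in{\rm dom}(-\Delta_\mu)\subseteq W^{1,2}_0(\Omega)$, the chart change (a diffeomorphism with bounded derivatives on the compact closure) gives $\widetilde u\in W^{1,2}(V)$. The key computation is to transport the distributional identity $-\Delta u = f\,d\mu$ (with $f:=\Delta_\mu u\geq0$ $\mu$-a.e., the subharmonicity) through $\varphi_i$: for $v\in C_c^\infty(V)$, a change of variables converts $\int_\Omega\langle\nabla u,\nabla(v\circ\varphi_i)\rangle\,d\nu$ into a Euclidean integral of the form $\int_V \langle A(x)\nabla\widetilde u,\nabla v\rangle\,dx$, where $A(x)=(g^{kl}\sqrt{\det g})\circ\varphi_i^{-1}$ is the (smooth, uniformly elliptic) pulled-back metric density. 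In other words $\widetilde u$ is a subsolution of the divergence-form operator $L\widetilde u=\mathrm{div}(A\nabla\widetilde u)$ with right-hand side the nonnegative measure $\widetilde f\,d\widetilde\mu_i$, where $\widetilde f=f\circ\varphi_i^{-1}$ and $\widetilde\mu_i=\mu\circ\varphi_i^{-1}$ (equation (\ref{m1})), which satisfies (MPI) on $\widetilde U_i$ by Proposition \ref{mpi}.

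At this point I would invoke Lemma \ref{Lemma1}, but that lemma is stated for the flat Laplacian $\Delta$ rather than a variable-coefficient $L$. There are two natural ways to close this gap, and I would take whichever is cleaner in the paper's conventions. The first is to observe that the proof of Lemma \ref{Lemma1}—mollify $\widetilde u$, use that $\int_{B_r(z)}\Delta\widetilde u^\delta\to\int_{B_r(z)}\widetilde f\,d\widetilde\mu_i\geq 0$ in the limit, then run the classical sub-mean-value/strong maximum principle argument of \cite[Theorem 3.4]{Ngai-Zhang-Zhao_2024}—goes through verbatim with $\Delta$ replaced by $L$, since the Euclidean strong maximum principle for subsolutions holds for any smooth uniformly elliptic divergence-form operator. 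The second, and perhaps more economical, route is to use normal coordinates centered at the specific point $\varphi_i^{-1}$-image: at the center $p_i$ of the chart one has $g_{kl}(p_i)=\delta_{kl}$, but this only helps at a single point, so the first route is the reliable one and I would state Lemma \ref{Lemma1} (or cite its source) in the variable-coefficient generality needed. Either way, Lemma \ref{Lemma1} (so adapted) yields that $\widetilde u$ cannot attain an interior maximum on $V$ unless it is constant there; since $\widetilde u(\varphi_i(z))=m=\max_{\overline V}\widetilde u$, we conclude $\widetilde u\equiv m$ on $V$, hence $u\equiv m$ on $\varphi_i^{-1}(V)$, a neighborhood of $z$. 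Thus $A$ is open, completing the argument.

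The main obstacle is precisely the bookkeeping in the chart-change step: verifying that $u\in{\rm dom}(-\Delta_\mu)$ with $-\Delta_\mu u=f$ really does pull back to a \emph{weak subsolution} of a uniformly elliptic $L$ with nonnegative measure data on $V$, and that Lemma \ref{Lemma1}'s proof genuinely accommodates variable (smooth) coefficients. Everything else—compactness of $\overline\Omega$ via Hopf–Rinow, the connectedness/open-closed dichotomy, and the transfer of (MPI) via Proposition \ref{mpi}—is routine. I expect the write-up to state the needed variable-coefficient version of Lemma \ref{Lemma1} explicitly (noting it follows by the same proof) and then to present the pullback computation carefully, after which the clopen argument finishes in a few lines.
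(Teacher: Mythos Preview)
Your approach is essentially the same as the paper's: pull back to normal coordinate charts, apply Lemma~\ref{Lemma1} locally, and then use connectedness of $\Omega$ to propagate constancy. The paper uses an explicit chain-of-charts argument (pick $U_{i_0}\ni p_0$, show $u$ is constant there, then pass to an overlapping $U_{i_1}$, and so on until all charts are exhausted) rather than your open--closed dichotomy on $A=\{u=m\}$, but these are equivalent.

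Where you are \emph{more} careful than the paper is in the variable-coefficient issue. The paper simply writes (see equation~\eqref{eqdef})
\[
\int_{\widetilde U_{i_0}} (u\circ\varphi_{i_0}^{-1})\,\Delta(v\circ\varphi_{i_0}^{-1})\,dx
=\int_{\widetilde U_{i_0}} (f\circ\varphi_{i_0}^{-1})\,(v\circ\varphi_{i_0}^{-1})\,d(\mu\circ\varphi_{i_0}^{-1}),
\]
with the flat Euclidean $\Delta$ on the left, and then invokes Lemma~\ref{Lemma1} directly. As you correctly note, the honest change of variables produces the divergence-form operator $\mathrm{div}(A\nabla\,\cdot\,)$ with $A=(g^{kl}\sqrt{\det g})\circ\varphi_{i_0}^{-1}$, not the flat Laplacian; the paper suppresses this and does not address it. Your suggestion to observe that the proof of Lemma~\ref{Lemma1} (and the underlying \cite[Theorem~3.4]{Ngai-Zhang-Zhao_2024}) goes through for smooth uniformly elliptic coefficients is the right fix, and it is the one point where your write-up would be more rigorous than the paper's.
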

\begin{proof}
For $i\in \mathbb{N}^{+}$ and $p_{i}\in\overline{\Omega}$, let $\epsilon_{i}:=\epsilon_{p_{i}}\in(0,{\rm inj}(\overline{\Omega}))$. Since $\overline{\Omega}$ is compact, there exists normal coordinate charts $\{(B^{M}_{\epsilon_{i}}(p_{i}),\varphi_{i})\}_{i=1}^{N}$ such that $\overline{\Omega}\subseteq\bigcup_{i=1}^{N}B^{M}_{\epsilon_{i}/2}(p_{i})$, where $\varphi_{i}$ is defined as in (\ref{NCM}). The reason we choose these coordinate charts is to ensure that the composition of a continuous map on $\overline{B^{M}_{\epsilon_{i}/2}(p_{i})}$ and $\varphi|_{\overline{B^{M}_{\epsilon_{i}/2}(p_{i})}}$ is continuous on $\overline{\varphi(B_{\epsilon_{i}/2}(z_{i}))}$. For convenience, we let $U_{i}:=B^{M}_{\epsilon_{i}/2}(p_{i})$, $i=1,\ldots,N$. Suppose $u(x)$ attains its maximum at some $p_{0}\in \Omega$. Then there would exist some integer $i_{0}\in\{1,\ldots,N\}$ such that $p_{0}\in U_{i_{0}}$. Without loss of generality, we assume $U_{i_{0}}\subseteq\Omega$. In fact, if $U_{i_{0}}\cap\Omega^{c}\neq\emptyset$, where $\Omega^{c}:=M\setminus\Omega$, the same argument works on $U_{i_{0}}\cap\Omega$. Let $v\in C_{c}^{\infty}(U_{i_{0}})\subseteq C_{c}^{\infty}(\Omega)$. For convenience, we let $f:=\Delta_{\mu}u$ and $\widetilde{U}_{i_{0}}:=\varphi_{i_{0}}(U_{i_{0}})$. Then
$$\int_{U_{0}}u\Delta v\,d\nu=\int_{U_{0}}f\cdot v\,d\mu,$$
which implies that
\begin{equation}\label{eqdef}
\int_{\widetilde{U}_{i_{0}}}u\circ\varphi_{i_{0}}^{-1}\Delta(v\circ\varphi_{i_{0}}^{-1})\,dx=\int_{\widetilde{U}_{i_{0}}}f\circ\varphi_{i_{0}}^{-1}\cdot v\circ\varphi_{i_{0}}^{-1}\,d\mu\circ\varphi_{i_{0}}^{-1}.
\end{equation}
Since $u$ is $\mu$-subharmonic, i.e., $f\geq0$ $\mu$-a.e., we have that
\begin{equation}\label{geq0}
f\circ\varphi_{i_{0}}^{-1}\geq0,\quad(\mu\circ\varphi_{i_{0}}^{-1})\text{-a.e.}.
\end{equation}
Moreover, $u\circ\varphi_{i_{0}}^{-1}\in C(\overline{\widetilde{U}_{i_{0}}})$ since $u\in C(\overline{\Omega})$. Combining Proposition \ref{mpi} and Lemma \ref{Lemma1}, we see that $u\circ\varphi_{i_{0}}^{-1}$ is constant on $\widetilde{U}_{i_{0}}$. Hence, $u$ is constant on $U_{i_{0}}$. Since $\Omega$ is connected and $\Omega\subseteq\bigcup_{i=1}^{N}U_{i}$, there exists some integer $i_{1}\in\{1,\ldots,N\}$ and $i_{1}\neq i_{0}$ such that $U_{i_{0}}\cap U_{i_{1}}\neq\emptyset$. Let $p_{1}\in U_{i_{0}}\cap U_{i_{1}}$. Then $u(p_{1})=u(p_{0})$. By a same argument, we can proof that $u$ is a constant on $U_{i_{1}}\cap\Omega$. Now, we select $U_{i_{2}}$ such that $U_{i_{2}} \cap\bigcup_{j=i_{0}}^{i_{1}}(U_{j}\cap\Omega)\neq\emptyset$. A similar argument implies that $u$ is constant on $\bigcup_{j=i_{0}}^{i_{2}}(U_{j}\cap\Omega)$. Repeat this process until all of $U_{i}$, $i=1,\ldots,N$ have been selected. Hence, $u$ is constant on $\Omega\cap U_{i}$, $i=1,\ldots,N$, which completes the proof.
\end{proof}

\begin{rmk}\label{rmkmin}
Substituting $u$ by $-u$ in the proof of Theorem \ref{thmmax}, we see that if $u\in C(\overline{\Omega})$ is a $\mu$-superharmonic function, then $u$ cannot attain its minimum value in $\Omega$.
\end{rmk}

A function $u\in{\rm dom}(\Delta_{\mu})$ is called $\mu$-harmonic if $\Delta_{\mu}u=0$ $\mu$-a.e.. It is known that a compact Riemannian manifold $M$ with $\partial M=\emptyset$ is closed (see, e.g., \cite{Lee_2013}). Hence, a classical harmonic function on $M$ is only constant. For a $\mu$-harmonic function, we have the following analogous property.
\begin{prop}\label{PMempty}
Let $M$ be a compact smooth and connected Riemannian manifold with $\partial M=\emptyset$. Let $\mu$ be a positive finite Borel measure on $M$. Assume $\mu$ satisfies (MPI) on $M$. Then any $\mu$-harmonic function $u$ on $M$ is a constant.
\end{prop}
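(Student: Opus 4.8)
The plan is to run an energy (integration-by-parts) argument based on the weak formulation of $\Delta_{\mu}$, rather than a maximum-principle argument, since we are not assuming a priori that $u$ is continuous and therefore cannot directly invoke Theorem \ref{thmmax}. Recall that a $\mu$-harmonic function $u$ lies in ${\rm dom}(\Delta_{\mu})\subseteq{\rm dom}(\mathcal{E})=\mathcal{F}\subseteq W_{0}^{1,2}(M)$, and, because $\Delta_{\mu}u=0$ $\mu$-a.e., it satisfies
$$\int_{M}\langle\nabla u,\nabla v\rangle\,d\nu=0\qquad\text{for all }v\in C_{c}^{\infty}(M).$$
Since $M$ is compact with $\partial M=\emptyset$ we have $C_{c}^{\infty}(M)=C^{\infty}(M)$, and by the definition of $W_{0}^{1,2}(M)$ we may choose a sequence $v_{n}\in C_{c}^{\infty}(M)$ with $v_{n}\to u$ in $W^{1,2}(M)$, so in particular $\nabla v_{n}\to\nabla u$ in $L^{2}(M,\nu)$. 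Passing to the limit in the displayed identity with $v=v_{n}$ yields $\int_{M}|\nabla u|^{2}\,d\nu=0$.

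Hence $\nabla u=0$ $\nu$-a.e.\ on $M$, and since $M$ is connected this forces $u$ to equal some constant $c$ $\nu$-a.e.; equivalently, $u$ and the constant function $c$ represent the same element of $W_{0}^{1,2}(M)$, and in particular $c\in\mathcal{F}$. It then remains only to record that $c$ is also the value of $u$ as an element of $L^{2}(M,\mu)$: since the embedding $\mathcal{F}\hookrightarrow L^{2}(M,\mu)$ is injective and $u$ and $c$ agree in $W_{0}^{1,2}(M)\supseteq\mathcal{F}$, we obtain $u=c$ $\mu$-a.e.\ as well, which is the assertion.

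I expect no serious obstacle; the argument is short. The one point that needs a little care is the passage from ``$u$ is constant $\nu$-a.e.'' to ``$u$ is the constant function'' in the sense relevant to ${\rm dom}(\Delta_{\mu})$ and $L^{2}(M,\mu)$, since in the cases of interest $\mu$ and $\nu$ are mutually singular; this is handled by the injectivity of $\mathcal{F}\hookrightarrow L^{2}(M,\mu)$ noted after (MPI). It is also worth keeping in mind that on a compact manifold without boundary the Poincar\'e inequality \eqref{eq1} fails for constants, so (MPI) is used here only insofar as it makes $-\Delta_{\mu}$, the form $\mathcal{E}$, and the embedding $\mathcal{F}\hookrightarrow L^{2}(M,\mu)$ well defined; it is not invoked again in the computation. (Alternatively, if continuity of $u$ were available, one could argue that $u$, being simultaneously $\mu$-subharmonic and $\mu$-superharmonic, can attain neither its maximum nor its minimum in $M$ by Theorem \ref{thmmax} and Remark \ref{rmkmin}, and then conclude from the compactness of $M$; but the energy argument above has the advantage of not assuming continuity.)
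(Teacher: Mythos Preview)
Your argument is correct and is a genuinely different, more elementary route than the paper's. The paper proceeds by observing that $\int_{M}u\,\Delta v\,d\nu=0$ for all $v\in C_{c}^{\infty}(M)$, then invokes Weyl's lemma to conclude that $u$ is smooth and (classically) harmonic on $M$, and finally uses that a harmonic function on a closed manifold is constant. Your approach bypasses elliptic regularity entirely: you pass to the limit $v=v_{n}\to u$ directly in the weak identity to obtain $\int_{M}|\nabla u|^{2}\,d\nu=0$, and then conclude $u$ is constant at the Sobolev level. The trade-off is that the paper's proof yields the extra information that $u$ is $C^{\infty}$ (not just a constant $\nu$-a.e.\ representative), while yours is self-contained and does not appeal to Weyl's lemma.

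One small wording issue: in the last step you invoke \emph{injectivity} of $\mathcal{F}\hookrightarrow L^{2}(M,\mu)$ to pass from $u=c$ in $W_{0}^{1,2}(M)$ to $u=c$ in $L^{2}(M,\mu)$, but what you are actually using is that the embedding is a well-defined map on $\nu$-equivalence classes (so equal inputs give equal outputs). Injectivity is the converse implication. The conclusion is unaffected; just adjust the justification.
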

\begin{proof}
Since $u$ is $\mu$-harmonic, by \cite[Proposition 3.3]{Ngai-Ouyang_2024}, for all $v\in C_{c}^{\infty}(M)$, we have
$$
\int_{M}-\Delta_{\mu}u\cdot v\,d\mu=\int_{M}\langle\nabla u, \nabla v\rangle\,d\nu=-\int_{M}u\Delta v\,d\nu=0.
$$
Therefore, by Weyl's Lemma (see e.g., \cite[(3.11)]{Yau_1976} or \cite{Weyl_1940}), $u$ is smooth and harmonic on $M$. Since $M$ is closed, we have that $u$ is a constant.
\end{proof}

\begin{prop}\label{PMNotempty}
Let $d\geq1$, $M$ be a complete smooth Riemannian $d$-manifold and $\Omega\subseteq M$ be a bounded domain with $\partial\Omega\neq\emptyset$. Let $\mu$ be a positive finite Borel measure on $M$ with {\rm supp}$(\mu)\subseteq\overline{\Omega}$ and $\mu(\Omega)>0$. Assume $\mu$ satisfies (MPI) on $\Omega$. If $u\in{\rm dom}(\Delta_{\mu})$ is a $\mu$-harmonic function, then $u\equiv0$.
\end{prop}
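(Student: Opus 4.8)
The plan is to exploit the relation between the self-adjoint operator $-\Delta_{\mu}$ and its Dirichlet form $\E$, together with the fact that the Poincar\'e inequality (\ref{eq1}) is available precisely because $\partial\Omega\neq\emptyset$.

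First I would note that $u\in{\rm dom}(\Delta_{\mu})\subseteq{\rm dom}(\E)=\F\subseteq W_{0}^{1,2}(\Omega)$. Since $-\Delta_{\mu}$ is a nonnegative self-adjoint operator with ${\rm dom}(\E)={\rm dom}((-\Delta_{\mu})^{1/2})$ and $\E(w,w)=\|(-\Delta_{\mu})^{1/2}w\|_{L^{2}(\Omega,\mu)}^{2}$, the standard identity $\E(u,u)=\langle -\Delta_{\mu}u,\,u\rangle_{L^{2}(\Omega,\mu)}$ holds for $u\in{\rm dom}(\Delta_{\mu})$ (alternatively, one can pick $v_{n}\in C_{c}^{\infty}(\Omega)$ converging to $u$ in the form norm and pass to the limit in $\int_{\Omega}\langle\nabla u,\nabla v_{n}\rangle\,d\nu=\int_{\Omega}(-\Delta_{\mu}u)v_{n}\,d\mu$, using \cite[Proposition 3.3]{Ngai-Ouyang_2024}). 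Because $u$ is $\mu$-harmonic, $\Delta_{\mu}u=0$ $\mu$-a.e., so $\E(u,u)=0$.

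By the definition (\ref{form}) of $\E$ this means $\int_{\Omega}|\nabla u|^{2}\,d\nu=0$, hence $\nabla u=0$ $\nu$-a.e. on $\Omega$. Now, since $u\in W_{0}^{1,2}(\Omega)$ and $\partial\Omega\neq\emptyset$, the Poincar\'e inequality (\ref{eq1}) gives $\int_{\Omega}|u|^{2}\,d\nu\leq C^{-1}\int_{\Omega}|\nabla u|^{2}\,d\nu=0$; therefore $u=0$ $\nu$-a.e., i.e., $u$ is the zero element of $W_{0}^{1,2}(\Omega)$, and in particular of $\F$. Finally, since the embedding $\F\hookrightarrow L^{2}(\Omega,\mu)$ supplied by \cite[Proposition 3.2]{Ngai-Ouyang_2024} is a well-defined (injective) linear map, the zero element of $\F$ is sent to the zero element of $L^{2}(\Omega,\mu)$, so $u\equiv0$.

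The only point requiring care is this last identification, namely that an element of ${\rm dom}(\Delta_{\mu})$ which vanishes $\nu$-a.e. also vanishes $\mu$-a.e.; this is exactly what the construction of $\F$ in \cite{Ngai-Ouyang_2024} guarantees, and it can also be seen directly: writing $u$ as the $\E$-limit of $u_{n}\in C_{c}^{\infty}(\Omega)$, one has $\nabla u_{n}\to\nabla u=0$ in $L^{2}(\Omega,\nu)$, so (MPI) forces $u_{n}\to0$ in $L^{2}(\Omega,\mu)$, whence $u\equiv0$.
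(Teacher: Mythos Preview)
Your argument is correct. It differs from the paper's proof in the route taken to reach $\int_{\Omega}|\nabla u|^{2}\,d\nu=0$: the paper first invokes Weyl's Lemma (as in the preceding Proposition~\ref{PMempty}) to conclude that $u$ is smooth and classically harmonic on $\Omega$, and then applies Green's formula $\int_{\Omega}|\nabla u|^{2}\,d\nu=-\int_{\Omega}u\,\Delta u\,d\nu=0$; from smoothness it obtains the pointwise conclusion $u(x)=0$ for every $x\in\Omega$. You instead use the abstract form--operator identity $\E(u,u)=\langle -\Delta_{\mu}u,u\rangle_{L^{2}(\Omega,\mu)}$ directly, avoiding any elliptic regularity step, and conclude that $u$ is the zero element of $\F$ (hence of $L^{2}(\Omega,\mu)$). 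Your approach is more self-contained within the Dirichlet-form framework and does not require Weyl's Lemma; the paper's approach yields a slightly stronger pointwise statement, though that extra strength is not used in the subsequent application (Proposition~\ref{prop>0}(a)).
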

\begin{proof}
The argument in the proof of Proposition \ref{PMempty} implies that $u$ is smooth and harmonic on $\Omega$. Since $u\in W_{0}^{1,2}(\Omega)$ and $\Delta u=0\in L^{2}(\Omega)$, by Green's formula (see \cite[Lemma 4.4]{Grigoryan_2009}), we have
$$
\int_{\Omega}|\nabla u|^{2}\,d\nu=\int_{\Omega}\langle\nabla u, \nabla u\rangle\,d\nu=-\int_{\Omega}u\Delta u\,d\nu=0
$$
Combining this and the Poincar\'e inequality (\ref{eq1}), we have $\int_{\Omega}|u|^{2}\,d\nu=0.$ Note that $u$ is smooth on $\Omega$. Therefore $u\equiv0$ on $\Omega$.
\end{proof}

\section{Nodal domain theorem}
Let $d\geq1$, $M$, $\Omega$ and $\mu$ be as in Theorem \ref{Courant2}. Then the eigenvalues of $-\Delta_{\mu}$ can be ordered as $0\leq\lambda_{1}\leq\lambda_{2}\leq\cdots.$ Moreover, there exists an orthonormal basis $\{\phi_{i}\}_{i=1}^{\infty}$ of $L^{2}(\Omega,\mu)$ consisting of eigenfunctions of $-\Delta_{\mu}$ \cite[Theorem 2.2]{Ngai-Ouyang_2024}. Based on this spectral property, we obtain the following proposition.

\begin{prop}\label{prop>0}
Assume the hypotheses of Theorem \ref{Courant2}. Let $\{\lambda_{i}\}$ be the eigenvalues of $-\Delta_{\mu}$ given by (\ref{eigen}) or (\ref{eigen2}).
\begin{enumerate}
\item[(a)]~If $\partial \Omega\neq\emptyset$, then the Dirichlet eigenvalues satisfy $0<\lambda_{1}\leq\lambda_{2}\leq\cdots$.

\item[(b)]~If $\partial \Omega=\emptyset$, then the eigenvalues satisfy $0=\lambda_{0}<\lambda_{1}\leq\lambda_{2}\leq\cdots$.
\end{enumerate}
\end{prop}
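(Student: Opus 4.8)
The plan is to reduce both parts to the description of the zero eigenspace of $-\Delta_{\mu}$ supplied by Propositions \ref{PMempty} and \ref{PMNotempty}. Since $-\Delta_{\mu}$ is a nonnegative self-adjoint operator, all of its eigenvalues are $\geq 0$, so what remains is to show that in case $(a)$ the value $0$ is not an eigenvalue, and in case $(b)$ the value $0$ is an eigenvalue of multiplicity exactly one. I will first record that the standing hypothesis $\underline{\dim}_{\infty}(\mu)>d-2$ implies, by \cite[Theorem 2.1]{Ngai-Ouyang_2024}, that $\mu$ satisfies (MPI) on $\Omega$, which is the hypothesis required to invoke the two propositions above; and that by \cite[Theorem 2.2]{Ngai-Ouyang_2024} there is an orthonormal basis of $L^{2}(\Omega,\mu)$ of eigenfunctions, with eigenvalues arranged in increasing order.

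For $(a)$ I would argue by contradiction. Suppose $\partial\Omega\neq\emptyset$ and $\lambda_{1}=0$. Then a $\lambda_{1}$-eigenfunction $\phi_{1}$ satisfies $-\Delta_{\mu}\phi_{1}=0$, so $\phi_{1}\in{\rm dom}(\Delta_{\mu})$ is $\mu$-harmonic, and Proposition \ref{PMNotempty} forces $\phi_{1}\equiv 0$, contradicting the fact that $\phi_{1}$ is a nonzero element of the orthonormal basis of $L^{2}(\Omega,\mu)$. Hence $0<\lambda_{1}\leq\lambda_{2}\leq\cdots$. A self-contained alternative avoids Proposition \ref{PMNotempty}: $\lambda_{1}=0$ gives $\int_{\Omega}|\nabla\phi_{1}|^{2}\,d\nu=\mathcal{E}(\phi_{1},\phi_{1})=0$, so $\phi_{1}$ is constant on the connected open set $\Omega$, and then the Poincar\'e inequality (\ref{eq1}) yields $\|\phi_{1}\|_{L^{2}(\Omega,\nu)}=0$, whence $\phi_{1}=0$ in $W_{0}^{1,2}(\Omega)$ and therefore in $L^{2}(\Omega,\mu)$.

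For $(b)$, when $\partial\Omega=\emptyset$ the set $\Omega$ is open, closed and connected, hence a compact connected smooth Riemannian manifold without boundary, so Proposition \ref{PMempty} applies with $M$ replaced by $\Omega$. First I would check that the constant function $\mathbf{1}$ lies in ${\rm dom}(\Delta_{\mu})$ and is $\mu$-harmonic: since $\nabla\mathbf{1}=0$ one has $\int_{\Omega}\langle\nabla\mathbf{1},\nabla v\rangle\,d\nu=0$ for all $v\in C_{c}^{\infty}(\Omega)$, i.e., $-\Delta_{\mu}\mathbf{1}=0$ in the distributional sense recalled in Section 2, and $\mathbf{1}\not\equiv 0$ in $L^{2}(\Omega,\mu)$ because $\mu(\Omega)>0$; thus $0$ is an eigenvalue of $-\Delta_{\mu}$ and we set $\lambda_{0}=0$. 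Next, by Proposition \ref{PMempty} every $\mu$-harmonic function on $\Omega$ is constant, so the eigenspace of $0$ equals $\mathbb{R}\mathbf{1}$, which is one-dimensional; therefore the eigenvalue $0$ has multiplicity exactly one and the next eigenvalue in the increasing ordering satisfies $\lambda_{1}>0$, giving $0=\lambda_{0}<\lambda_{1}\leq\lambda_{2}\leq\cdots$.

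Since the analytic substance is already contained in Propositions \ref{PMempty} and \ref{PMNotempty}, the remaining steps are bookkeeping rather than obstacles: verifying that constant functions genuinely belong to ${\rm dom}(\Delta_{\mu})$ in case $(b)$ (which follows from the construction in Section 2, as $\mathbf{1}\in C^{\infty}(\Omega)\subseteq\mathcal{F}={\rm dom}(\mathcal{E})$ and $\mathbf{1}$ solves $-\Delta_{\mu}\mathbf{1}=0$), and keeping track of the relabelling convention by which the zero eigenvalue, when it exists, is written $\lambda_{0}$ and removed from the list $\lambda_{1}\leq\lambda_{2}\leq\cdots$. The only point I expect to require any care is making the domain claim fully rigorous in the $\partial\Omega=\emptyset$ case, but this is routine given the setup.
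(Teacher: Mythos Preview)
Your proposal is correct and follows essentially the same route as the paper: part (a) via Proposition \ref{PMNotempty}, and part (b) by verifying that a nonzero constant lies in ${\rm dom}(\Delta_{\mu})$ and is $\mu$-harmonic. Your argument is in fact slightly more complete than the paper's, since you explicitly invoke Proposition \ref{PMempty} to conclude that the zero eigenspace is one-dimensional (hence $\lambda_{1}>0$), a point the paper's own proof leaves implicit.
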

\begin{proof}
\noindent(a) Since $\partial \Omega\neq\emptyset$, by Proposition \ref{PMNotempty}, if $0$ is an eigenvalue of $-\Delta_{\mu}$, then the unique function satisfying $\Delta_{\mu}u=0$ is $u\equiv0$. Hence, $0$ is not an eigenvalue of $-\Delta_{\mu}$. Now, part (a) follows by using \cite[Theorem 2.2]{Ngai-Ouyang_2024}.

\noindent(b) Let $f=C\neq0$ be a constant function on $\Omega$. Obviously, $f\in W^{1,2}(\Omega)$. Since $\Omega=\overline{\Omega}$ is compact, it is complete by Hopf-Rinow theorem. Hence $f\in W_{0}^{1,2}(\Omega)$ (see, e.g., \cite[Theorem 2.7]{Hebey_1996}). Moreover, $f\in L^{2}(\Omega,\mu)$ and $\|f\|_{L^{2}(\Omega,\mu)}\neq0$. Therefore, $f\in{\rm dom}(\mathcal{E})$. Note that the zero function $0$ belong to $L^{2}(\Omega,\mu)$. Thus,
$$\int_{\Omega}\langle\nabla f,\nabla v\rangle\,d\nu=0=\int_{\Omega}0\cdot v\,d\mu\qquad\text{for~all}~v\in C_{c}^{\infty}(\Omega).$$
Hence, it follows from \cite[Proposition 3.3]{Ngai-Ouyang_2024} that $f\in{\rm dom}(\Delta_{\mu})$ and $\Delta_{\mu}f=0$. Thus, $\lambda_{0}=0$ is an eigenvalue of $-\Delta_{\mu}$. Part (a) now follows by using \cite[Theorem 2.2]{Ngai-Ouyang_2024}.
\end{proof}

We use Rayleigh quotient (see, e.g., \cite{Deng-Ngai_2021,Ngai-Zhang-Zhao_2024}) to study the eigenvalues of $-\Delta_{\mu}$.
\begin{defi}\label{RQ}
Assume the hypotheses of Proposition \ref{prop>0}. Let $\mathcal{E}(\cdot,\cdot)$ be defined as in (\ref{form}) on $L^{2}(\Omega,\mu)$. For any $u\in\rm{dom(\mathcal{E})}$, define the {\em Rayleigh quotient}
\begin{equation}
R_{\mu}(u):=\frac{\mathcal{E}(u,u)}{\quad(u,u)_{L^2(\Omega,\mu)}}
=\frac{\int_{\Omega}|\nabla u|^{2} \,d\nu}{\displaystyle\int_{\Omega}|u|^{2} \,d\mu}.
\end{equation}
\end{defi}

Note that if $\partial \Omega=\emptyset$, then it follows from the proof of Proposition \ref{prop>0}(a) that a constant $\phi_{0}=C\neq0$ is an eigenfunction of $-\Delta_{\mu}$ with eigenvalue 0. In order to study the positive eigenvalues and nonconstant eigenfunctions of $-\Delta_{\mu}$, we let

\begin{equation}\label{HH}
\mathcal{H}_{\mu}:=\begin{cases}\{u\in{\rm dom}(\mathcal{E}):\int_{\Omega}u\,d\mu=0\},\quad &\text{if}~\partial \Omega=\emptyset,\\
{\rm dom}(\mathcal{E}),&\text{if}~\partial \Omega\neq\emptyset
\end{cases}
\end{equation}
and
\begin{equation}\label{HHH}
\widetilde{\mathcal{H}}_{\mu}:=\begin{cases}\{u\in L^{2}(\Omega,\mu):\int_{\Omega}u\,d\mu=0\},\quad &\text{if}~\partial \Omega=\emptyset,\\
L^{2}(\Omega,\mu),&\text{if}~\partial \Omega\neq\emptyset.
\end{cases}
\end{equation}
According to \cite[Theorem 2.2]{Ngai-Ouyang_2024} and Proposition \ref{prop>0}, there exists an orthonormal basis $\{\phi_{i}\}_{i=1}^{\infty}$ of $\widetilde{\mathcal{H}}_{\mu}$ consisting of non-constant eigenfunctions of $-\Delta_{\mu}$ on $\Omega$. The following lemma is a generalization of \cite[Lemma 4.1]{Ngai-Zhang-Zhao_2024}; we omit the proof.

\begin{lem}\label{lemRu}
Let $\{\lambda_{n}\}_{n\geq1}$ be the set of positive eigenvalues of $-\Delta_{\mu}$ with corresponding eigenfunctions $\{u_{n}\}_{n\geq1}$. Then
$$
\lambda_{n}=\begin{cases}{\rm min}\big\{R_{\mu}(u)\,\big|\,u\in \mathcal{H}_{\mu}\big\},\quad&  n=1,\\
{\rm min}\big\{R_{\mu}(u)\,\big|\,u\in \mathcal{H}_{\mu},\,(u,u_{i})_{L^2(\Omega,\mu)}=0,\,i=1,\ldots,n-1\big\},& n\geq2.
\end{cases}$$
Moreover, for all integers $n\geq1$, $R_{\mu}(u_{n})=\lambda_{n}$.
\end{lem}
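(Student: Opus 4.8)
The plan is to use the spectral decomposition of $-\Delta_{\mu}$ together with the form representation $\mathcal{E}(u,v)=\big((-\Delta_{\mu})^{1/2}u,\,(-\Delta_{\mu})^{1/2}v\big)_{L^2(\Omega,\mu)}$ recorded in Section 2, thereby reducing the characterization of $\lambda_{n}$ to a Parseval-type computation in the orthonormal eigenbasis. By \cite[Theorem 2.2]{Ngai-Ouyang_2024} and the discussion following \eqref{HHH}, the non-constant eigenfunctions $\{\phi_{i}\}_{i=1}^{\infty}$, which we may take to coincide with the normalized $\{u_{i}\}_{i=1}^{\infty}$, form an orthonormal basis of $\widetilde{\mathcal{H}}_{\mu}$, with $-\Delta_{\mu}\phi_{i}=\lambda_{i}\phi_{i}$ and $0<\lambda_{1}\le\lambda_{2}\le\cdots$ by Proposition \ref{prop>0}. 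The first step is to expand an arbitrary $u\in\mathcal{H}_{\mu}$ as $u=\sum_{i\ge1}c_{i}\phi_{i}$ with $c_{i}=(u,\phi_{i})_{L^2(\Omega,\mu)}$, so that $\|u\|_{L^2(\Omega,\mu)}^{2}=\sum_{i\ge1}c_{i}^{2}$. Note that $\mathcal{H}_{\mu}=\mathrm{dom}(\mathcal{E})\cap\widetilde{\mathcal{H}}_{\mu}$ in both boundary cases, so that the constant eigenfunction present when $\partial\Omega=\emptyset$ is automatically excluded and plays no role in the minimization; the expansion above is therefore legitimate for every $u\in\mathcal{H}_{\mu}$.

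The key step is to establish the energy identity $\mathcal{E}(u,u)=\sum_{i\ge1}\lambda_{i}c_{i}^{2}$ for every $u\in\mathcal{H}_{\mu}$. Since $\mathrm{dom}(\mathcal{E})=\mathrm{dom}((-\Delta_{\mu})^{1/2})$, membership $u\in\mathrm{dom}(\mathcal{E})$ is equivalent to $\sum_{i}\lambda_{i}c_{i}^{2}<\infty$, and then the functional calculus gives $(-\Delta_{\mu})^{1/2}u=\sum_{i}\sqrt{\lambda_{i}}\,c_{i}\phi_{i}$ convergent in $L^2(\Omega,\mu)$. Applying the form representation yields $\mathcal{E}(u,u)=\|(-\Delta_{\mu})^{1/2}u\|_{L^2(\Omega,\mu)}^{2}=\sum_{i}\lambda_{i}c_{i}^{2}$. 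Consequently, for any nonzero $u\in\mathcal{H}_{\mu}$,
$$R_{\mu}(u)=\frac{\sum_{i\ge1}\lambda_{i}c_{i}^{2}}{\sum_{i\ge1}c_{i}^{2}}\ge\lambda_{1},$$
with equality precisely when the $\phi_{i}$-expansion of $u$ is concentrated on the $\lambda_{1}$-eigenspace; in particular $R_{\mu}(u_{1})=\lambda_{1}$, which proves the case $n=1$ and, taking $u=u_{n}$, also the ``moreover'' assertion $R_{\mu}(u_{n})=\lambda_{n}$ for every $n\ge1$.

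For $n\ge2$, the orthogonality constraints $(u,u_{i})_{L^2(\Omega,\mu)}=0$ for $i=1,\ldots,n-1$ force $c_{1}=\cdots=c_{n-1}=0$, whence
$$R_{\mu}(u)=\frac{\sum_{i\ge n}\lambda_{i}c_{i}^{2}}{\sum_{i\ge n}c_{i}^{2}}\ge\lambda_{n},$$
since $\lambda_{i}\ge\lambda_{n}$ for all $i\ge n$; equality is attained at $u=u_{n}$, which realizes the minimum. I expect the only delicate point to be the convergence justification behind the energy identity $\mathcal{E}(u,u)=\sum_{i}\lambda_{i}c_{i}^{2}$: one must confirm that the eigenfunction expansion of $u$ converges not merely in $L^2(\Omega,\mu)$ but in the form norm, equivalently that $\sum_{i}\sqrt{\lambda_{i}}\,c_{i}\phi_{i}$ converges in $L^2(\Omega,\mu)$ to $(-\Delta_{\mu})^{1/2}u$. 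This is guaranteed by the identification $\mathrm{dom}(\mathcal{E})=\mathrm{dom}((-\Delta_{\mu})^{1/2})$ and the spectral theorem for the nonnegative self-adjoint operator $-\Delta_{\mu}$, both furnished by the construction in Section 2, so that no additional hypotheses are required. Since this is the standard Courant–Fischer argument adapted to the form $\mathcal{E}$ and the reference measure $\mu$, it reproduces \cite[Lemma 4.1]{Ngai-Zhang-Zhao_2024} once the present measure-theoretic setup is in place.
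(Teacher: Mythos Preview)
Your proof is correct and is precisely the standard Courant--Fischer/Rayleigh-quotient argument via the spectral decomposition of $-\Delta_{\mu}$; the paper itself omits the proof, stating only that the lemma is a generalization of \cite[Lemma 4.1]{Ngai-Zhang-Zhao_2024}, so your argument is exactly what is expected. Your observation that $\mathcal{H}_{\mu}=\mathrm{dom}(\mathcal{E})\cap\widetilde{\mathcal{H}}_{\mu}$ in both boundary cases cleanly handles the exclusion of the constant eigenfunction when $\partial\Omega=\emptyset$, and the justification of the energy identity via $\mathrm{dom}(\mathcal{E})=\mathrm{dom}((-\Delta_{\mu})^{1/2})$ is the right way to close the convergence gap.
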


The proof of the following corollary is standard; we include it in Appendix A.
\begin{coro}\label{corollary1}
Use the hypotheses of Lemma \ref{lemRu}. Let $\partial\Omega\neq\emptyset$. If $u\in\mathcal{H}_{\mu}$ satisfies $R_{\mu}(u)=\lambda_{n}$ for some eigenvalue $\lambda_{n}$, then $u$ is a $\lambda_{n}$-eigenfunction.
\end{coro}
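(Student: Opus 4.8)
The plan is to identify $u$ as a constrained minimizer of the Rayleigh quotient $R_{\mu}$ and then read off the eigenvalue equation from the first variation. Since $\partial\Omega\neq\emptyset$, Proposition \ref{prop>0}(a) gives $0<\lambda_{1}\leq\lambda_{2}\leq\cdots$, and by \cite[Theorem 2.2]{Ngai-Ouyang_2024} the eigenfunctions $\{u_{i}\}_{i\geq1}$ of $-\Delta_{\mu}$ form an orthonormal basis of $\widetilde{\mathcal{H}}_{\mu}=L^{2}(\Omega,\mu)$, while $\mathcal{H}_{\mu}={\rm dom}(\mathcal{E})={\rm dom}((-\Delta_{\mu})^{1/2})$. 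For $u\in\mathcal{H}_{\mu}$ I would expand $u=\sum_{i}c_{i}u_{i}$ with $c_{i}=(u,u_{i})_{L^{2}(\Omega,\mu)}$; Parseval gives $(u,u)_{L^{2}(\Omega,\mu)}=\sum_{i}c_{i}^{2}$, and since $(-\Delta_{\mu})^{1/2}u_{i}=\lambda_{i}^{1/2}u_{i}$ the spectral theorem for $(-\Delta_{\mu})^{1/2}$ yields $\mathcal{E}(u,u)=\|(-\Delta_{\mu})^{1/2}u\|_{L^{2}(\Omega,\mu)}^{2}=\sum_{i}\lambda_{i}c_{i}^{2}$, the series converging because $u\in{\rm dom}((-\Delta_{\mu})^{1/2})$.

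By Lemma \ref{lemRu}, $\lambda_{n}$ is the minimum of $R_{\mu}$ over the subspace $V_{n}:=\{v\in\mathcal{H}_{\mu}:(v,u_{i})_{L^{2}(\Omega,\mu)}=0,\ i=1,\dots,n-1\}$ (with $V_{1}:=\mathcal{H}_{\mu}$), so the hypothesis $R_{\mu}(u)=\lambda_{n}$ places $u$ at this constrained minimum, in particular $u\in V_{n}$. Differentiating $t\mapsto R_{\mu}(u+t\psi)$ at $t=0$ along any $\psi\in V_{n}$ and using that $t=0$ is a global minimum over the subspace $V_{n}$, I obtain the Euler--Lagrange identity $\mathcal{E}(u,\psi)=\lambda_{n}(u,\psi)_{L^{2}(\Omega,\mu)}$ for all $\psi\in V_{n}$. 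To extend it to all of ${\rm dom}(\mathcal{E})$, observe that for $i<n$, $\mathcal{E}(u,u_{i})=\lambda_{i}^{1/2}\big((-\Delta_{\mu})^{1/2}u,u_{i}\big)_{L^{2}(\Omega,\mu)}=\lambda_{i}(u,u_{i})_{L^{2}(\Omega,\mu)}=0=\lambda_{n}(u,u_{i})_{L^{2}(\Omega,\mu)}$, and ${\rm dom}(\mathcal{E})={\rm span}\{u_{1},\dots,u_{n-1}\}\oplus V_{n}$. Hence $\mathcal{E}(u,\psi)=\lambda_{n}(u,\psi)_{L^{2}(\Omega,\mu)}$ for every $\psi\in{\rm dom}(\mathcal{E})$, which by the definition of $\Delta_{\mu}$ through $\mathcal{E}$ (see \cite[Proposition 3.3]{Ngai-Ouyang_2024}) means $u\in{\rm dom}(\Delta_{\mu})$ and $-\Delta_{\mu}u=\lambda_{n}u$; since $R_{\mu}(u)$ is defined we have $u\not\equiv0$, so $u$ is a $\lambda_{n}$-eigenfunction.

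Even more directly, once $u\in V_{n}$ is known the conclusion drops out of the two Parseval identities: $R_{\mu}(u)=\lambda_{n}$ forces $\sum_{i}(\lambda_{i}-\lambda_{n})c_{i}^{2}=0$ with $c_{1}=\dots=c_{n-1}=0$, so every nonzero term has $\lambda_{i}\geq\lambda_{n}$ and therefore $c_{i}=0$ whenever $\lambda_{i}>\lambda_{n}$, i.e.\ $u$ is a finite linear combination of $\lambda_{n}$-eigenfunctions. I do not expect a genuine obstacle here; the one point that must be handled with care is the interpretation of the hypothesis, namely that the value $\lambda_{n}$ in $R_{\mu}(u)=\lambda_{n}$ is the constrained minimum of Lemma \ref{lemRu}, which is exactly what forces $u\in V_{n}$. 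Granting this, both routes above are routine, which is why the proof is relegated to the appendix.
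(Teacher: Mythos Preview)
Your first-variation route is exactly the paper's argument: in the appendix they differentiate $\epsilon\mapsto R_\mu(u+\epsilon v)$ at $\epsilon=0$, treat $n=1$ with $v\in C_c^\infty(\Omega)$ directly, and for $n\geq 2$ replace $v$ by its projection $\omega=v-\sum_{k<n}a_ku_k$ onto $V_n$, then use $\mathcal{E}(u,u_k)=\lambda_k(u,u_k)_{L^2(\Omega,\mu)}=0$ to strip off the correction---precisely your extension step. Your Parseval alternative (once $u\in V_n$, the identity $\sum_{i\geq n}(\lambda_i-\lambda_n)c_i^2=0$ with nonnegative summands forces $u$ into the $\lambda_n$-eigenspace) is not in the paper and is a cleaner, calculus-free route.

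Your caveat about the hypothesis is well placed and in fact identifies a gap the paper itself shares: the bare assumption $R_\mu(u)=\lambda_n$ does \emph{not} imply $u\in V_n$. With three distinct eigenvalues $\lambda_1<\lambda_2<\lambda_3$ one can choose $u=au_1+bu_3$ so that $R_\mu(u)=\lambda_2$, yet $u\notin V_2$ and $u$ is not an eigenfunction at all. The paper's appendix simply asserts ``by Lemma~\ref{lemRu}, $u\in(\mathrm{span}\{u_1,\ldots,u_k\})^\perp$'' without justification. So the corollary must be read with the implicit extra hypothesis that $u$ realizes the constrained minimum in Lemma~\ref{lemRu}, i.e.\ $u\in V_n$; under that reading both your argument and the paper's go through.
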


\begin{proof}[Proof of Theorem \ref{Courant2}]
\noindent(a) Using Theorem \ref{thmmax}, the fact that $\lambda_{1}>0$, and an argument similar to that of \cite[Theorem 1.1]{Ngai-Zhang-Zhao_2024}, one can show that $u_{1}(x)\neq0$ for all $x\in\Omega$; we omit the details. Without loss of generality, we assume $u_{1}(x)>0$. Note that $u_{n}(x)$ is orthogonal to $u_{1}(x)$, i.e., $\int_{\Omega}u_{n}(x)u_{1}(x) \,d\mu=0.$ Hence, for $n\geq 2$, by the continuity of $u_{n}(x)$, the number of the nodal domain of $u_{n}(x)$ is at least $2$. Let $\mathcal{Z}_{\mu}$ be defined as in (\ref{nodalset}). Assume $\mathcal{Z}_{\mu}(u_{n})$ divides $\Omega$ into $m$ nodal domains $\Omega_{1},\ldots,\Omega_{m}$, where each $\Omega_{i}$ is a nodal domain. Let
$$
w_{j}(x)=\begin{cases}
u_{n}(x),\,\quad\qquad x\in\Omega_{j},\\
0,\,\qquad \qquad\quad x\in M\backslash\Omega_{j}.
\end{cases}
$$
and
\begin{equation}\label{TB88}
w(x)=\sum_{j=1}^{m}c_{j}w_{j}(x),
\end{equation}
where $c_{1},\ldots,c_{m}$ are arbitrary constants. By the same calculation as in the proof of \cite[Theorem 1.1]{Ngai-Zhang-Zhao_2024}, we have
\begin{equation}\label{lamde}
R_{\mu}(w)=\lambda_{n}.
\end{equation}
We can choose the coefficients $\{c_{j}\}_{j=1}^{m}$ of $w(x)$ in (\ref{TB88}) so that
\begin{equation}\label{leqs}
(w,u_{i})_{L^2(\Omega,\mu)}=0,\,\qquad i=1,\ldots,m-1.
\end{equation}
The existence of $\{c_{j}\}_{j=1}^{m}$ follows by the properties of the solutions of the linear system of equations in (\ref{leqs}). Hence, for this choice of $\{c_{j}\}_{j=1}^{m}$, Lemma \ref{lemRu} implies that $\lambda_{m}\leq R_{\mu}(w)$. Combining this and (\ref{lamde}), we see that $\lambda_{m}\leq\lambda_{n}$, which implies $m\leq n$.

\noindent(b) Assume $\partial \Omega=\emptyset$. Since, $u_{n}\in\widetilde{\mathcal{H}}_{\mu}$ for all $n\geq1$, we have $\int_{\Omega}u_{n}\,d\mu=0$. Therefore, by the continuity of $u_{n}$, the number of nodal domains of $u_{n}$ is at least 2. Assume $u_{n}$ has $m$ nodal domains. Using the argument in the proof part (a), we let $w(x)$ be defined as in (\ref{TB88}). By (\ref{lamde}), $R_{\mu}(w)=\lambda_{n}$. For the same reason as in part (a), we can choose the coefficients $\{c_{j}\}_{j=1}^{m}$ of $w(x)$ in (\ref{TB88}) such that $(w,u_{0})_{L^2(\Omega,\mu)}=0$, where $u_{0}=C$ is a constant, and
$$
(w,u_{i})_{L^2(\Omega,\mu)}=0,\qquad i=1,\ldots,m-2.
$$
Therefore, for this choice of $\{c_{j}\}_{j=1}^{m}$, Lemma \ref{lemRu} implies that $\lambda_{m-1}\leq R_{\mu}(w)$. Hence, $\lambda_{m-1}\leq\lambda_{n}$, which implies $m\leq n+1$.
\end{proof}
The following corollaries follow from Theorem \ref{Courant2} and its proof. We omit the proofs.
\begin{coro}
Assume the same hypotheses of Theorem \ref{Courant2}.
\begin{enumerate}
\item[(a)]~If $\partial \Omega\neq\emptyset$, then $u_{1}$ has only $1$ nodal domain and $u_{2}$ has $2$ nodal domains.

\item[(b)]~If $\partial \Omega=\emptyset$, then $u_{1}$ has $2$ nodal domains.
\end{enumerate}
\end{coro}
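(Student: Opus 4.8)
The plan is to read the corollary off directly from Theorem \ref{Courant2} together with the sign information already produced inside its proof: the upper bounds $\mathcal{N}_{\mu}(u_{1})\leq 1$ and $\mathcal{N}_{\mu}(u_{2})\leq 2$ (case $\partial\Omega\neq\emptyset$), and $\mathcal{N}_{\mu}(u_{1})\leq 2$ (case $\partial\Omega=\emptyset$), are exactly what Theorem \ref{Courant2} gives, so the only thing to supply is a matching lower bound in each case. All the lower bounds rest on one elementary principle: a function continuous on the connected set $\Omega$ that is forced to take both a positive and a negative value has at least two nodal domains, since $\{u>0\}$ and $\{u<0\}$ are then disjoint nonempty open subsets of $\Omega\setminus\mathcal{Z}_{\mu}(u)$.

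For part (a) I would first treat $u_{1}$. The proof of Theorem \ref{Courant2}(a) already establishes, via Theorem \ref{thmmax} and $\lambda_{1}>0$, that $u_{1}(x)\neq 0$ for every $x\in\Omega$, so we may assume $u_{1}>0$ on $\Omega$. Then $\mathcal{Z}_{\mu}(u_{1})\cap\Omega=\emptyset$, hence $\Omega\setminus\mathcal{Z}_{\mu}(u_{1})=\Omega$ is connected and $u_{1}$ has exactly one nodal domain. For $u_{2}$ the upper bound $\mathcal{N}_{\mu}(u_{2})\leq 2$ is Theorem \ref{Courant2}(a); for the lower bound I would use the orthogonality $\int_{\Omega}u_{2}u_{1}\,d\mu=0$ with $u_{1}>0$. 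If $u_{2}$ did not change sign, say $u_{2}\geq 0$ on $\Omega$, then $u_{2}u_{1}\geq 0$ on ${\rm supp}(\mu)$ (using $u_{1}=0$ on $\partial\Omega$), and the vanishing integral would force $u_{2}u_{1}=0$ $\mu$-a.e., hence $u_{2}=0$ $\mu$-a.e., contradicting $\|u_{2}\|_{L^{2}(\Omega,\mu)}=1$. Thus $u_{2}$ takes both signs, giving $\mathcal{N}_{\mu}(u_{2})\geq 2$, and combined with the upper bound $\mathcal{N}_{\mu}(u_{2})=2$.

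For part (b), $\partial\Omega=\emptyset$ and the constant $u_{0}=C$ is the eigenfunction for $\lambda_{0}=0$, so $u_{1}$ is the first nonconstant eigenfunction and lies in $\widetilde{\mathcal{H}}_{\mu}$; in particular $\int_{\Omega}u_{1}\,d\mu=0$. The same argument as above, now with $u_{1}$ orthogonal to the positive constant $u_{0}$, shows $u_{1}$ cannot keep a fixed sign, so $\mathcal{N}_{\mu}(u_{1})\geq 2$; together with the bound $\mathcal{N}_{\mu}(u_{1})\leq n+1=2$ from Theorem \ref{Courant2}(b) this yields $\mathcal{N}_{\mu}(u_{1})=2$.

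I expect no serious obstacle, since everything reduces to the already-proved Courant bounds and a sign-change argument. The one point requiring care is passing from ``$u$ vanishes or changes sign only $\mu$-a.e. on ${\rm supp}(\mu)$'' to ``$u$ genuinely changes sign on the open connected set $\Omega$,'' which is where continuity of the eigenfunction on $\overline{\Omega}$ (a standing hypothesis of Theorem \ref{Courant2}) is essential: continuity guarantees that $\{u>0\}$ and $\{u<0\}$ are genuinely nonempty subdomains of $\Omega$, so that the count $\mathcal{N}_{\mu}$ of connected components of $\Omega\setminus\mathcal{Z}_{\mu}(u)$ is indeed at least two.
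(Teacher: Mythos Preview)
Your proposal is correct and follows exactly the route the paper intends: the paper explicitly states that this corollary follows from Theorem~\ref{Courant2} and its proof (and omits the details), and the ingredients you invoke---$u_{1}>0$ on $\Omega$ from the maximum principle, the orthogonality $\int_{\Omega}u_{2}u_{1}\,d\mu=0$ giving $\mathcal{N}_{\mu}(u_{2})\geq 2$, and $\int_{\Omega}u_{1}\,d\mu=0$ in the closed case---are precisely the observations already recorded inside that proof. Your write-up simply fills in the omitted verification.
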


\begin{coro}
Assume the same hypotheses of Theorem \ref{Courant2}.  Let $n\geq2$. Assume that the eigenvalue $\lambda_{n}$ has multiplicity $r\geq1$.
\begin{enumerate}
\item[(a)]~If $\partial \Omega\neq\emptyset$, then $u_{n}$ has at most $n+r-1$ nodal domains.

\item[(b)]~If $\partial \Omega=\emptyset$, then $u_{n}$ has at most $n+r$ nodal domains.
\end{enumerate}
\end{coro}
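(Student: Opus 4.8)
The plan is to run the proof of Theorem \ref{Courant2} essentially verbatim; the only new ingredient is that, when the min--max characterization of Lemma \ref{lemRu} produces an inequality between two eigenvalues, one must read off the index carefully, since $\lambda_n$ is now repeated $r$ times. To this end I would first fix notation for the multiplicity block: as the eigenvalues are nondecreasing and $\lambda_n$ has multiplicity $r$, there is an integer $q\ge 0$ with
\[
\lambda_q<\lambda_{q+1}=\cdots=\lambda_{q+r}=\lambda_n<\lambda_{q+r+1},\qquad q+1\le n\le q+r,
\]
where $\lambda_0:=0$ in the case $\partial\Omega=\emptyset$ and the left-hand inequality is vacuous when $q=0$. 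The facts I would exploit throughout are the strict gap $\lambda_{q+r+1}>\lambda_{q+r}=\lambda_n$ and the inequality $q\le n-1$ (so that $q+r\le n+r-1$). The strict gap is exactly what replaces the step ``$\lambda_m\le\lambda_n\Rightarrow m\le n$'' in the proof of Theorem \ref{Courant2}, which is no longer valid in the presence of multiplicities.

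For part (a), assuming $\partial\Omega\neq\emptyset$, I would argue by contradiction: suppose $u_n$ had at least $m:=q+r+1$ nodal domains $\Omega_1,\dots,\Omega_m$. Setting $w_j:=u_n\mathbf 1_{\Omega_j}$ and $w:=\sum_{j=1}^{m}c_jw_j$ as in \eqref{TB88}, I would recall from the computation leading to \eqref{lamde} that the $w_j$ lie in $\mathcal H_\mu={\rm dom}(\mathcal E)$, are linearly independent (disjoint supports), and satisfy $R_\mu(w)=\lambda_n$ for every nonzero coefficient vector. The $q+r=m-1$ homogeneous conditions $(w,u_i)_{L^2(\Omega,\mu)}=0$, $i=1,\dots,q+r$, in the $m$ unknowns $c_j$ then admit a nonzero solution, and Lemma \ref{lemRu} gives $\lambda_{q+r+1}\le R_\mu(w)=\lambda_n$, contradicting the strict gap. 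Hence $\mathcal N_\mu(u_n)\le q+r\le n+r-1$.

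For part (b), assuming $\partial\Omega=\emptyset$, I would repeat the construction but spend one extra degree of freedom to enforce $w\in\mathcal H_\mu$. Supposing $u_n$ had at least $m:=q+r+2$ nodal domains, I would impose $\int_\Omega w\,d\mu=0$ (orthogonality to the constant eigenfunction $u_0$) together with $(w,u_i)_{L^2(\Omega,\mu)}=0$ for $i=1,\dots,q+r$, a total of $q+r+1=m-1$ homogeneous conditions in $m$ unknowns, hence solvable nontrivially. The same two facts, $R_\mu(w)=\lambda_n$ and Lemma \ref{lemRu}, then yield $\lambda_{q+r+1}\le\lambda_n$, again a contradiction, so that $\mathcal N_\mu(u_n)\le q+r+1\le n+r$.

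I expect the only delicate point---and the main (though minor) obstacle---to be the bookkeeping: correctly identifying the block $\{q+1,\dots,q+r\}$, invoking the strict inequality at its right end, and, in case (b), accounting for the constant eigenfunction by one additional constraint, which is precisely why the bound there exceeds that of case (a) by one. No new analytic input is needed beyond the identity $R_\mu(w)=\lambda_n$ for all coefficient choices and the membership $w\in{\rm dom}(\mathcal E)$, both inherited directly from the proof of Theorem \ref{Courant2}.
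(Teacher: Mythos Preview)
Your proposal is correct and is precisely the natural adaptation of the proof of Theorem~\ref{Courant2} that the paper has in mind; the paper itself omits the proof of this corollary, stating only that it follows from Theorem~\ref{Courant2} and its proof. The key refinement you identify---replacing the step ``$\lambda_m\le\lambda_n\Rightarrow m\le n$'' by the strict gap $\lambda_{q+r+1}>\lambda_n$ at the right end of the multiplicity block---is exactly what is needed, and your bookkeeping $q+r\le n+r-1$ (resp.\ $q+r+1\le n+r$) is correct.
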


\section{Linear Dirichlet Problem and Green operator on bounded domains}

In this section, we study the inverse operator of $\Delta_{\mu}$. Let $d\geq2$ and $M$ be a smooth complete Riemannian $d$-manifold. Let $\Omega\subseteq M$ be a bounded domain. Let $\M(\Omega)$ be the vector space of finite Borel measures on $\Omega$. In order to define the Green operator, we first consider the following linear Dirichlet problem with density $\mu$:
\begin{equation}\label{DP2}
\begin{cases}
\Delta u=\mu,\quad &\text{on}~\Omega,\\
u=0,\quad&\text{in}~\partial\Omega,
\end{cases}
\end{equation}
where $\mu\in\M(\Omega)$. Dirichlet problem (\ref{DP2}) was considered in, for instance, \cite{Littman-Stampacchia-Weinberger_1963,Ponce_2006} to study the regular points for elliptic equations and potential theory. We follow the method in \cite[Definition 3.1]{Ponce_2006} of choosing the space of test functions to define a weak solution of (\ref{DP2}).
\begin{defi}
Let $\Omega\subseteq M$ be a bounded domain, and let $\mu\in\M(\Omega)$. We call $u:\Omega\to\mathbb{R}$ a {\em weak solution} of (\ref{DP2}) if $u\in L^{1}(\Omega)$ and
\begin{equation}\label{weak}
\int_{\Omega}u\Delta v\,d\nu=-\int_{\Omega}v\,d\mu\qquad \text{for~all} ~v\in C_{c}^{\infty}(\Omega).
\end{equation}
\end{defi}

Let $\big(W_{0}^{1,2}(\Omega)\big)'$ be the dual space of the Sobolev space $W_{0}^{1,2}(\Omega)$.
\begin{prop}\label{prop3}
Let $d\geq2$, M be a smooth complete Riemannian $d$-manifold and $\Omega\subseteq M$ be a bounded domain. If $\mu\in\big(W_{0}^{1,2}\big(\Omega)\big)'$, then there exists an unique weak solution $u\in W_{0}^{1,2}(\Omega)$ of the Dirichlet problem (\ref{DP2}).
\end{prop}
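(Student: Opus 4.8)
The plan is to solve the variational problem for the bilinear form $a(u,v):=\int_{\Omega}\langle\nabla u,\nabla v\rangle\,d\nu$ on the Hilbert space $W_{0}^{1,2}(\Omega)$ and then to check that the resulting minimizer is exactly a weak solution in the sense of \eqref{weak}. Since $\partial\Omega\neq\emptyset$ here (the Dirichlet problem \eqref{DP2} only makes sense with a boundary condition), the Poincar\'e inequality \eqref{eq1} holds, so $a(\cdot,\cdot)$ is an inner product on $W_{0}^{1,2}(\Omega)$ equivalent to $\langle\cdot,\cdot\rangle_{W^{1,2}(\Omega)}$; in particular $a$ is bounded and coercive on $W_{0}^{1,2}(\Omega)$.

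First I would observe that $\mu\in\big(W_{0}^{1,2}(\Omega)\big)'$ means $v\mapsto\int_{\Omega}v\,d\mu$ (interpreted via the action of the functional $\mu$, extending the integral against a measure on the dense subspace $C_{c}^{\infty}(\Omega)$) defines a bounded linear functional $\ell$ on $W_{0}^{1,2}(\Omega)$; set $\ell(v):=-\langle\mu,v\rangle$. By the Lax--Milgram theorem (or simply the Riesz representation theorem, since $a$ is symmetric), there exists a unique $u\in W_{0}^{1,2}(\Omega)$ with
$$
\int_{\Omega}\langle\nabla u,\nabla v\rangle\,d\nu=a(u,v)=\ell(v)=-\langle\mu,v\rangle\qquad\text{for all }v\in W_{0}^{1,2}(\Omega).
$$
Next I would restrict this identity to test functions $v\in C_{c}^{\infty}(\Omega)$. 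For such $v$, the functional $\langle\mu,v\rangle$ agrees with $\int_{\Omega}v\,d\mu$, and by Green's formula (the divergence theorem applied to $v\nabla u$ on the smooth domain, or \cite[Lemma 4.4]{Grigoryan_2009}) one has $\int_{\Omega}\langle\nabla u,\nabla v\rangle\,d\nu=-\int_{\Omega}u\Delta v\,d\nu$ because $v$ has compact support. Substituting gives $\int_{\Omega}u\Delta v\,d\nu=\int_{\Omega}v\,d\mu$ for all $v\in C_{c}^{\infty}(\Omega)$, i.e. \eqref{weak}. Since $\Omega$ is bounded and $u\in W_{0}^{1,2}(\Omega)\subseteq L^{2}(\Omega)\subseteq L^{1}(\Omega)$ (finite volume), $u$ is a weak solution.

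For uniqueness within $W_{0}^{1,2}(\Omega)$: if $u_{1},u_{2}$ are two weak solutions in $W_{0}^{1,2}(\Omega)$, then $w:=u_{1}-u_{2}\in W_{0}^{1,2}(\Omega)$ satisfies $\int_{\Omega}w\Delta v\,d\nu=0$ for all $v\in C_{c}^{\infty}(\Omega)$, hence $\Delta w=0$ in the sense of distributions; by Proposition \ref{PMNotempty} (or equivalently by Weyl's lemma together with the argument there showing a harmonic $W_{0}^{1,2}$ function vanishes), $w\equiv0$. The main obstacle, and the only point needing care, is the identification of the abstract functional $\mu\in\big(W_{0}^{1,2}(\Omega)\big)'$ with integration against the measure on the test space $C_{c}^{\infty}(\Omega)$ — one has to be explicit that the functional is, by hypothesis, an extension of $v\mapsto\int_{\Omega}v\,d\mu$ — and the clean application of Green's formula, which uses the smoothness of $u$ only through the $W_{0}^{1,2}$ framework and the compact support of $v$, so no boundary term appears.
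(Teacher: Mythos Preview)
Your approach matches the paper's exactly: Riesz representation for the equivalent inner product $(u,v)\mapsto\int_\Omega\langle\nabla u,\nabla v\rangle\,d\nu$ on $W_0^{1,2}(\Omega)$, followed by Green's formula for test functions $v\in C_c^\infty(\Omega)$. One sign slip: with your choice $\ell(v)=-\langle\mu,v\rangle$ you end up with $\int_\Omega u\Delta v\,d\nu=+\int_\Omega v\,d\mu$, the negative of \eqref{weak}; set $\ell(v)=\langle\mu,v\rangle$ (as the paper does) and the signs line up. Your uniqueness via Weyl's lemma and Proposition~\ref{PMNotempty} is a valid alternative to the paper's one-line density argument.
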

\begin{proof}
Since $\mu\in\big(W_{0}^{1,2}(\Omega)\big)'$, there exists a constant $A>0$ such that for any $\varphi\in W_{0}^{1,2}(\Omega)$
$$|\mu[\varphi]|:=\Big|\int_{\Omega} \varphi\,d\mu\Big|\leq A\|\varphi\|_{W_{0}^{1,2}(\Omega)}.$$
Note that for any $u,v\in W_{0}^{1,2}(\Omega)$, $\int_{\Omega}\langle\nabla u,\nabla v\rangle\,d\nu$ is an inner product in $W_{0}^{1,2}(\Omega)$. The Riesz representation theorem implies that there exists an unique $u\in W_{0}^{1,2}(\Omega)$ such that for every $\varphi\in W_{0}^{1,2}(\Omega)$,
\begin{equation}\label{eq14}
\int_{\Omega}\varphi\,d\mu=\int_{\Omega}\langle\nabla u,\nabla \varphi\rangle\,d\nu
\end{equation}
Note that (\ref{eq14}) holds for all $\varphi\in C_{c}^{\infty}(\Omega)\subseteq W_{0}^{1,2}(\Omega)$ and
\begin{equation}\label{eq14a}
\int_{\Omega}\varphi\,d\mu=\int_{\Omega}\langle\nabla u,\nabla \varphi\rangle\,d\nu=-\int_{\Omega}u\Delta \varphi\,d\nu,
\end{equation}
where the last equation follows by Green formula (see e.g., \cite[Lemma 4.4]{Grigoryan_2009}). Hence $u$ is a weak solution of Dirichlet problem (\ref{DP2}). Moreover, one can check that the uniqueness of $u$ is reserved by the  denseness of $C_{c}^{\infty}(\Omega)\subseteq W_{0}^{1,2}(\Omega)$.
\end{proof}

Let $\mathcal{X}$ be a metric space, $\mu$ be a positive finite Borel measure on $\mathcal{X}$ with compact support, and $s>0$. We recall that $\mu$ is {\em upper $s$-regular} if there exists some constant $c>0$ such that
$$\mu(B^{\mathcal{X}}_{r}(x))\leq cr^{s}$$
for all $x\in{\rm supp}(\mu)$ and all $r\in[0,{\rm diam}({\rm supp}(\mu))]$ (see e.g., \cite{Hu-Lau-Ngai_2006}).

Note that the Bishop-Gromov inequality (see e.g., \cite{Bishop-Crittenden_1964,Anderson_1990}) implies that the Riemannian volume measure $\nu$ of $M$ is upper $d$-regular. It follows by \cite[Lemma 4.1]{Ngai-Ouyang_2024} that $\underline{\dim}_{\infty}(\nu)\geq d,$ where $\underline{\rm dim}_{\infty}(\nu)$ is defined as in (\ref{dim}). We have the following example; we omit the proof.
\begin{exam}
Let $d\geq2$, $M$ be a complete $d$-dimensional Riemannian manifold with nonnegative Ricci curvature and $\nu$ be the Riemannian volume measure. Then $\underline{\dim}_{\infty}(\nu)\geq d$.
\end{exam}

\begin{prop}\label{prop5}
Assume the hypotheses of Proposition \ref{prop3}. Let $\mu$ be a positive finite Borel measure on $\Omega$ with $\mu(\Omega)>0$ and supp$(\mu)\subseteq\overline{\Omega}$. Assume that $\underline{\dim}_{\infty}(\mu)>d-2$. Then for any $f\in L^{2}(\Omega,\mu)$, $f\mu\in\big(W_{0}^{1,2}(\Omega)\big)'$.
\end{prop}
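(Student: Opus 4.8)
The plan is to observe that the assertion $f\mu\in\big(W_{0}^{1,2}(\Omega)\big)'$ means precisely that the linear functional
\[
\varphi\longmapsto\int_{\Omega}\varphi f\,d\mu
\]
is bounded for the $W_{0}^{1,2}(\Omega)$-norm, and that the whole content is packaged into the measure Poincar\'e inequality, which the hypothesis $\underline{\dim}_{\infty}(\mu)>d-2$ supplies essentially for free. So the strategy is: get (MPI) from the dimension condition, estimate the functional on smooth functions by Cauchy--Schwarz and (MPI), and extend by density.

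Concretely, I would first invoke \cite[Theorem 2.1]{Ngai-Ouyang_2024}: since $\underline{\dim}_{\infty}(\mu)>d-2$, the measure $\mu$ satisfies (MPI) on $\Omega$, so there is a constant $C>0$ with $\int_{\Omega}|v|^{2}\,d\mu\le C\int_{\Omega}|\nabla v|^{2}\,d\nu$ for every $v\in C_{c}^{\infty}(\Omega)$. Then, for $v\in C_{c}^{\infty}(\Omega)$, Cauchy--Schwarz in $L^{2}(\Omega,\mu)$ together with (MPI) gives
\[
\Big|\int_{\Omega}v f\,d\mu\Big|\le\|f\|_{L^{2}(\Omega,\mu)}\,\|v\|_{L^{2}(\Omega,\mu)}\le\sqrt{C}\,\|f\|_{L^{2}(\Omega,\mu)}\Big(\int_{\Omega}|\nabla v|^{2}\,d\nu\Big)^{1/2}\le\sqrt{C}\,\|f\|_{L^{2}(\Omega,\mu)}\,\|v\|_{W_{0}^{1,2}(\Omega)}.
\]
Hence the functional $v\mapsto\int_{\Omega}v f\,d\mu$ is bounded on the dense subspace $C_{c}^{\infty}(\Omega)$ of $W_{0}^{1,2}(\Omega)$, and therefore extends uniquely to a bounded linear functional on $W_{0}^{1,2}(\Omega)$; this extension is the action of $f\mu$, so $f\mu\in\big(W_{0}^{1,2}(\Omega)\big)'$. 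I would also record that $f\mu$ is a finite signed Borel measure: by Cauchy--Schwarz $\int_{\Omega}|f|\,d\mu\le\|f\|_{L^{2}(\Omega,\mu)}\,\mu(\Omega)^{1/2}<\infty$, so $f\mu\in\M(\Omega)$ as well, which is what makes the conclusion usable in Proposition \ref{prop3}.

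The only genuine subtlety — and the step I would be most careful about — is making sense of $\int_{\Omega}\varphi f\,d\mu$ for an arbitrary $\varphi\in W_{0}^{1,2}(\Omega)$, not merely a smooth one. The same inequality resolves this: if $v_{n}\in C_{c}^{\infty}(\Omega)$ with $v_{n}\to\varphi$ in $W_{0}^{1,2}(\Omega)$, then $(v_{n})$ is Cauchy in $L^{2}(\Omega,\mu)$ and hence converges there to a $\mu$-trace $\bar\varphi$ of $\varphi$; this is exactly the bounded embedding into $L^{2}(\Omega,\mu)$ underlying the discussion of $\mathcal{F}={\rm dom}(\mathcal{E})$ in Section 2, and one sets $\int_{\Omega}\varphi f\,d\mu:=\int_{\Omega}\bar\varphi f\,d\mu$, consistent with the literal integral when $\varphi\in C(\overline{\Omega})$. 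No use of the upper $s$-regularity of $\nu$ or of $\underline{\dim}_{\infty}(\nu)>d-2$ is needed here; those enter only in the proof of Theorem \ref{thcon}.
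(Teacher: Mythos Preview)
Your proof is correct and follows essentially the same approach as the paper: both invoke the condition $\underline{\dim}_{\infty}(\mu)>d-2$ to obtain (MPI) and the resulting $L^{2}(\Omega,\mu)$-representative (what you call the $\mu$-trace $\bar\varphi$) of an arbitrary $\varphi\in W_{0}^{1,2}(\Omega)$, then apply Cauchy--Schwarz in $L^{2}(\Omega,\mu)$ to bound the functional. Your version is slightly more detailed in spelling out the density/extension step and in recording that $f\mu\in\M(\Omega)$, but the argument is the same.
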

\begin{proof}
Let $f\in L^{2}(\Omega,\mu)$. For any $\phi\in W_{0}^{1,2}(\Omega)$, under the assumption of $\underline{\dim}_{\infty}(\mu)>d-2$, $\phi$ has a unique $L^{2}(\Omega,\mu)$-representative $\overline{\phi}$. Hence, We have
\begin{align*}
|f\mu[\phi]|&:=\Big|\int_{\Omega}\overline{\phi}\cdot f\,d\mu\Big|\leq\|f\|_{L^{2}(\Omega,\mu)}\|\overline{\phi}\|_{L^{2}(\Omega,\mu)}\\
&\leq C \|f\|_{L^{2}(\Omega,\mu)}\|\nabla\phi\|_{L^{2}(\Omega)}\leq C \|f\|_{L^{2}(\Omega,\mu)}\|\phi\|_{W_{0}^{1,2}(\Omega)}.
\end{align*}
Therefore, for any $f\in L^{2}(\Omega,\mu)$, $f\mu\in\big(W_{0}^{1,2}(\Omega)\big)'$.
\end{proof}

Let $\Omega\subseteq M$ be a bounded domain on which the Green function $G_{y}(x)$ of $\Delta$ exists. It is known that $G_{y}(x)$ satisfies the equation (in distributional sense):
\begin{equation}\label{GD}
-\Delta G_{y}(x)=\delta_{y}, \quad \text{for~all}~y\in\Omega,
\end{equation}
where $\delta_{y}$ is the Dirac measure at $y$. For more properties of $G_{y}(x)$, see e.g., \cite{Grigoryan_1999,Li-Tam_1987,Grigoryan_2009} and the references therein. We summarize the properties of $G_{y}(x)$ below.
\begin{prop}\label{pg}
Let $d\geq2$, M be a smooth complete Riemannian $d$-manifold and $\Omega\subseteq M$ be a bounded domain on which the Green function $G_{y}(x)$ exists. Then $G_{y}(x)$ satisfies
\begin{enumerate}
\item[(a)] The on-diagonal value $G_{y}(y)$ is infinite and the singularity of $G_{y}(x)$ as $x\to y$ is of the same order as that in $\R^{d}$; that is
$$\widetilde{C}_{1}\log{\rho(x,y)^{-1}}\leq G_{y}(x)\leq\widetilde{C}_{2}\log{\rho(x,y)^{-1}},~\text{when}~d=2,$$
and
$$\widetilde{C}_{1}\rho(x,y)^{2-d}\leq G_{y}(x)\leq\widetilde{C}_{2}\rho(x,y)^{2-d},~\text{when}~d\geq3,$$
as $\rho(x,y)\to0$, where the constant $\widetilde{C}_{1}$ and $\widetilde{C}_{2}$ only depended on $d$.

\item[(b)] $G_{y}(x)>0$ and $G_{y}(x)=G_{x}(y)$ for any $x,y\in\Omega$.

\item[(c)] $G_{y}(x)$ is harmonic away from $y$.

\item[(d)] $G_{y}(x)=0$ for any $x\in\Omega$ and $y\in\partial\Omega$.
\end{enumerate}
Moreover, if $\partial\Omega$ is smooth, $G_{y}(x)$ is continuous up to the boundary $\partial\Omega$.
\end{prop}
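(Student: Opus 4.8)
The plan is to reduce each of the four assertions, together with the final clause, to classical facts about the Dirichlet Green function of a uniformly elliptic operator on a smooth bounded domain; the only genuinely local computation is a parametrix near the diagonal, which handles (a). Throughout, recall that $\overline{\Omega}$ is compact by the Hopf-Rinow theorem, so all geometric quantities that appear are uniformly bounded. Fix $y\in\Omega$ and work in a geodesic normal coordinate chart $\varphi\colon B^{M}_{\epsilon}(y)\to B_{\epsilon}(0)\subseteq\R^{d}$ of the type constructed in Section 3, centered so that $\varphi(y)=0$; in these coordinates the metric satisfies $g_{ij}(0)=\delta_{ij}$ and $\partial_{k}g_{ij}(0)=0$, so $\Delta$ is a smooth uniformly elliptic operator that differs from the Euclidean Laplacian by a second-order operator whose coefficients are of size $O(\rho(x,y)^{2})$ near $y$. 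Writing $\Gamma_{d}$ for the Euclidean fundamental solution ($\Gamma_{2}(r)=-c_{2}\log r$, and $\Gamma_{d}(r)=c_{d}r^{2-d}$ for $d\geq3$), the standard parametrix construction for Green's functions on manifolds (see \cite[Chapter 4]{Aubin_1982}, and also \cite{Grigoryan_1999,Li-Tam_1987,Grigoryan_2009}) produces a decomposition $G_{y}(x)=\chi(x)\,\Gamma_{d}(\rho(x,y))+h_{y}(x)$, where $\chi$ is a cutoff equal to $1$ in a neighborhood of $y$ and $h_{y}$ is bounded and continuous on $\overline{\Omega}$; hence $G_{y}(x)/\Gamma_{d}(\rho(x,y))\to1$ as $\rho(x,y)\to0$. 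This yields $G_{y}(y)=+\infty$ and the two-sided bounds in (a) with $\widetilde{C}_{1},\widetilde{C}_{2}$ taken to be fixed multiples of $c_{d}$ (of $c_{2}$ when $d=2$), so that they depend on $d$ only — the threshold on $\rho(x,y)$ below which the displayed inequalities hold of course depends on the chart $\varphi$, hence on the local geometry of $\overline{\Omega}$.

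For (c) I would observe that, away from $y$, the defining distributional identity $-\Delta G_{y}=\delta_{y}$ reads $\Delta G_{y}=0$ in $\mathcal{D}'(\Omega\setminus\{y\})$, and Weyl's lemma (invoked as in the proof of Proposition \ref{PMempty}) upgrades $G_{y}$ to a smooth classical harmonic function on $\Omega\setminus\{y\}$. Assertion (d) is the homogeneous Dirichlet boundary condition that is part of the definition of the Dirichlet Green function. For (b), positivity and symmetry are standard properties of the Dirichlet Green function (see \cite{Grigoryan_1999,Li-Tam_1987,Grigoryan_2009}): concretely, on a bounded domain with boundary one has $G_{y}(x)=\int_{0}^{\infty}p^{\Omega}_{t}(x,y)\,dt$, where $p^{\Omega}_{t}$ is the symmetric, strictly positive Dirichlet heat kernel, and this representation yields both $G_{y}(x)=G_{x}(y)$ and $G_{y}(x)>0$ for all $x,y\in\Omega$ at once; alternatively, positivity follows from (a), (c) and the strong minimum principle applied on $\Omega$ with a small geodesic ball around $y$ removed.

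For the final clause, suppose $\partial\Omega$ is smooth. Then in a neighborhood of $\partial\Omega$ the function $G_{y}$ is harmonic with boundary value $0$ on the smooth hypersurface $\partial\Omega$, so the standard boundary regularity theory for the Dirichlet problem — exterior-ball barriers, or boundary Schauder/elliptic estimates as in \cite[Theorem 4.17]{Aubin_1982} — shows that $G_{y}$ extends continuously (in fact smoothly) up to $\partial\Omega$ with boundary value $0$.

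The step I expect to cost the most is the parametrix underlying (a): one has to verify that, after subtracting the local model $\chi\,\Gamma_{d}(\rho(\cdot,y))$, the error generated by the $O(\rho^{2})$ discrepancy between $\Delta$ and the Euclidean Laplacian can be absorbed into a bounded, continuous remainder $h_{y}$ — this is where the compactness of $\overline{\Omega}$ and the uniform control of the charts from Section 3 are used — and that the leading coefficient is genuinely the universal Euclidean constant, so that $\widetilde{C}_{1}$ and $\widetilde{C}_{2}$ depend on $d$ alone. Once (a) is in hand, items (b), (c), (d) and the boundary regularity are routine assemblies of the maximum principle, the heat-kernel representation of $G_{y}$, Weyl's lemma and standard elliptic boundary estimates, none of which requires new ideas.
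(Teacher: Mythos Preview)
Your proof proposal is correct and follows the standard route (parametrix for (a), Weyl's lemma for (c), the Dirichlet condition for (d), heat-kernel or maximum-principle arguments for (b), and elliptic boundary regularity for the final clause). However, you should know that the paper does not actually give a proof of this proposition: it introduces it with the sentence ``We summarize the properties of $G_{y}(x)$ below'' and simply cites \cite{Grigoryan_1999,Li-Tam_1987,Grigoryan_2009} (and \cite[Theorem 4.17]{Aubin_1982} for existence) for the details. So there is nothing to compare your argument against --- you have supplied a proof where the paper deliberately omits one, and your sketch is consistent with the references the paper invokes.
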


Let $\mu$ be a positive finite Borel measure with supp$(\mu)\subseteq\overline{\Omega}$ and $\mu(\Omega)>0$. We need the following condition: there exists a constant $\overline{C}>0$ such that
\begin{equation}\label{C2}
\sup_{y\in\Omega}\int_{\Omega}G_{y}(x)\,d\mu(x)\leq \overline{C}<\infty.
\end{equation}
The following proposition generalizes \cite[Proposition 4.1]{Hu-Lau-Ngai_2006} to $\Omega\subseteq M$. We will omit some details in the proof.
\begin{prop}\label{prop4}
Assume the hypotheses of Theorem \ref{thcon}. Then condition (\ref{C2}) holds.
\end{prop}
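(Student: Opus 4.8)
The plan is to bound $\int_{\Omega}G_{y}(x)\,d\mu(x)$ uniformly in $y\in\Omega$ by splitting $\Omega$ into a small geodesic ball $B^{M}_{r_{0}}(y)$ around the singularity of $G_{y}$ and its complement. On the complement $G_{y}$ is harmonic (Proposition \ref{pg}(c)) and, as we will show, uniformly bounded, so its contribution is at most a constant times $\mu(\Omega)$. On the small ball the integrand behaves like $\rho(x,y)^{2-d}$ (for $d\geq3$) or $\log\rho(x,y)^{-1}$ (for $d=2$) by Proposition \ref{pg}(a), and integrability of these against $\mu$ follows from the hypothesis $\underline{\dim}_{\infty}(\mu)>d-2$: by \eqref{dim} there exist $s\in(d-2,\underline{\dim}_{\infty}(\mu))$ and $\delta_{0}>0$ with $\sup_{x}\mu(B^{M}_{\delta}(x))\leq\delta^{s}$ for all $0<\delta\leq\delta_{0}$, hence (by the inclusion $B^{M}_{\delta}(y)\subseteq B^{M}_{2\delta}(x_{0})$ for any $x_{0}\in B^{M}_{\delta}(y)\cap{\rm supp}(\mu)$, or $\mu(B^{M}_{\delta}(y))=0$ otherwise) $\mu(B^{M}_{\delta}(y))\leq c\,\delta^{s}$ for all $y\in\Omega$ and $0<\delta\leq\delta_{0}$. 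I fix such $s,\delta_{0},c$ and take $r_{0}\in(0,\delta_{0})$, with $r_{0}<1$ when $d=2$, small enough that the near-diagonal bounds of Proposition \ref{pg}(a) hold whenever $0<\rho(x,y)\leq r_{0}$. This is the argument of \cite[Proposition 4.1]{Hu-Lau-Ngai_2006}, now carried out on $\Omega\subseteq M$ with geodesic balls.

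For the far part I would apply the maximum principle to $G_{y}$ on $\Omega\setminus\overline{B^{M}_{r_{0}/2}(y)}$, a relatively compact open set whose boundary lies in $\partial\Omega\cup\partial B^{M}_{r_{0}/2}(y)$. There $G_{y}$ is harmonic and continuous up to the boundary (continuity up to $\partial\Omega$ holds since $\partial\Omega$ is smooth, by the last assertion of Proposition \ref{pg}; continuity up to $\partial B^{M}_{r_{0}/2}(y)$ holds since $G_{y}$ is smooth away from $y$). On $\partial\Omega$ one has $G_{y}=0$ (Proposition \ref{pg}(d)), and on $\partial B^{M}_{r_{0}/2}(y)$ one has $G_{y}(x)\leq\widetilde{C}_{2}(r_{0}/2)^{2-d}$ for $d\geq3$ and $G_{y}(x)\leq\widetilde{C}_{2}\log(2/r_{0})$ for $d=2$, by Proposition \ref{pg}(a). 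Hence $\sup_{\rho(x,y)\geq r_{0}}G_{y}(x)\leq K$ with $K$ independent of $y$, and $\int_{\Omega\setminus B^{M}_{r_{0}}(y)}G_{y}(x)\,d\mu(x)\leq K\,\mu(\Omega)$.

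For the near part I would use the layer-cake formula. For $d\geq3$, writing $h(x)=\rho(x,y)^{2-d}$,
\begin{equation*}
\int_{B^{M}_{r_{0}}(y)}G_{y}(x)\,d\mu(x)\leq\widetilde{C}_{2}\int_{0}^{\infty}\mu\big(\{x\in B^{M}_{r_{0}}(y):h(x)>t\}\big)\,dt,
\end{equation*}
and since $\{h>t\}\cap B^{M}_{r_{0}}(y)=B^{M}_{\min(r_{0},\,t^{-1/(d-2)})}(y)$, splitting the $t$-integral at $t=r_{0}^{2-d}$ and using $\mu(B^{M}_{\delta}(y))\leq c\,\delta^{s}$ for $0<\delta\leq r_{0}$ gives the bound $r_{0}^{2-d}\mu(\Omega)+c\int_{r_{0}^{2-d}}^{\infty}t^{-s/(d-2)}\,dt$, which is finite since $s/(d-2)>1$ and is independent of $y$. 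For $d=2$ the same computation with $h(x)=\log\rho(x,y)^{-1}$ (positive on $B^{M}_{r_{0}}(y)$ because $r_{0}<1$) and $\{h>t\}\cap B^{M}_{r_{0}}(y)=B^{M}_{\min(r_{0},e^{-t})}(y)$ yields $(\log r_{0}^{-1})\mu(\Omega)+c\int_{\log r_{0}^{-1}}^{\infty}e^{-st}\,dt<\infty$, again uniformly in $y$, where any $s>0=d-2$ suffices. Adding the near and far contributions produces the constant $\overline{C}$ of \eqref{C2}.

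The main obstacle is obtaining the near-diagonal estimate of $G_{y}$ \emph{uniformly} in the pole $y$: Proposition \ref{pg}(a) is phrased as an asymptotic as $\rho(x,y)\to0$, so one must argue — using compactness of $\overline{\Omega}$ together with the standard parametrix construction of $G_{y}$ on geodesic balls underlying Proposition \ref{pg}(a) — that a single radius $r_{0}>0$ and a single pair of constants work for all $x,y$ with $0<\rho(x,y)\leq r_{0}$. A minor technical point, already noted above, is that the supremum in \eqref{dim} ranges over $x\in{\rm supp}(\mu)$ whereas we apply the bound with center $y\in\Omega$; this costs only a factor $2^{s}$ absorbed into $c$. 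The remaining steps are the routine bookkeeping of \cite[Proposition 4.1]{Hu-Lau-Ngai_2006}, which is why the paper omits the details.
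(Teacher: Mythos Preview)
Your proposal is correct and follows essentially the same approach as the paper: both invoke the upper $s$-regularity of $\mu$ that comes from $\underline{\dim}_{\infty}(\mu)>d-2$ (the paper cites \cite[Lemma 4.1]{Ngai-Ouyang_2024} for this, you derive it directly from \eqref{dim}), then reproduce the argument of \cite[Proposition 4.1]{Hu-Lau-Ngai_2006} by splitting into near and far parts and using the maximum principle (your far-part bound is exactly what the paper later records as \eqref{decreasing}). The only cosmetic difference is that you handle the near-diagonal integral via the layer-cake formula, whereas the paper's companion estimates (e.g.\ Lemma \ref{lem2}) use the equivalent dyadic-annulus decomposition over the sets $Q_{k}(y)$; both yield the same uniform bound.
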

\begin{proof}
Assume that $\underline{\dim}_{\infty}(\mu)>d-2$. By \cite[Lemma 4.1]{Ngai-Ouyang_2024}, $\mu$ is $\alpha$-regular for some $\alpha>d-2$, i.e., there exists a constant $C>0$ such that for all $x\in{\rm supp}(\mu)$ and all $r>0$,
\begin{equation}\label{eqregular}
\mu(B_{r}(x))<Cr^{\alpha}.
\end{equation}
The proposition follows by using properties of $G_{y}(x)$ and applying a similar argument as that in the proof of \cite[Proposition 4.1]{Hu-Lau-Ngai_2006}.
\end{proof}

Under the assumption of (\ref{C2}), we define the {\em Green's operator} $G_{\mu}$ on $L^{p}(\Omega,\mu)~(1\leq p\leq\infty)$ by
\begin{equation}\label{go}
(G_{\mu}f)(x)=\int_{\Omega}G_{y}(x)f(y)\,d\mu(y)
\end{equation}
By (\ref{C2}) and a similar calculation as in \cite[Proposition 4.2]{Hu-Lau-Ngai_2006}, we have that $G_{\mu}$ is bounded on $L^{p}(\Omega,\mu)$.
\begin{prop}\label{propLp}
Assume the hypotheses of Theorem \ref{thcon}. For any $f\in L^{p}(\Omega,\mu)$ with $1\leq p\leq\infty$, Let $G_{\mu}f$ be defined as (\ref{go}). Then there exists some constant $\widehat{C}>0$ such
\begin{equation}\label{eqnu}
\|G_{\mu}f\|_{L^{p}(\Omega)}\leq\widehat{C}\|f\|_{L^{p}(\Omega,\mu)}.
\end{equation}
\end{prop}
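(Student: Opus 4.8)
The plan is to run a Schur-type test, exploiting the symmetry $G_{y}(x)=G_{x}(y)$ (Proposition \ref{pg}(b)) together with two uniform integrability bounds for the Green function: the bound (\ref{C2}) against $\mu$ already provided by Proposition \ref{prop4}, and a companion bound against the Riemannian volume $\nu$. For the latter I would argue exactly as in the proof of Proposition \ref{prop4}. Since $\overline{\Omega}$ is compact, $\nu|_{\Omega}$ is a finite Borel measure with support $\overline{\Omega}$, and by the Bishop--Gromov inequality it is upper $d$-regular; since $d>d-2$, the same computation using the logarithmic/power singularity estimates of Proposition \ref{pg}(a) near the diagonal and the continuity (hence boundedness) of $G_{y}(x)$ away from the diagonal up to the smooth boundary $\partial\Omega$ yields a constant $\overline{C}'>0$ with
$$\sup_{y\in\Omega}\int_{\Omega}G_{y}(x)\,d\nu(x)\le\overline{C}'<\infty$$
(equivalently, $y\mapsto\int_{\Omega}G_{y}(x)\,d\nu(x)$ is the torsion function of $\Omega$, which is bounded on a bounded domain with smooth boundary).

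With these two bounds in hand the endpoint cases are immediate. Since $G_{y}(x)>0$ we have $|G_{\mu}f(x)|\le\int_{\Omega}G_{y}(x)\,|f(y)|\,d\mu(y)$. For $p=\infty$, using $G_{y}(x)=G_{x}(y)$ and (\ref{C2}) (with the roles of $x$ and the bound variable relabelled), $|G_{\mu}f(x)|\le\|f\|_{L^{\infty}(\Omega,\mu)}\int_{\Omega}G_{y}(x)\,d\mu(y)\le\overline{C}\,\|f\|_{L^{\infty}(\Omega,\mu)}$ for every $x$, hence for $\nu$-a.e.\ $x$. For $p=1$, Tonelli's theorem gives $\int_{\Omega}|G_{\mu}f(x)|\,d\nu(x)\le\int_{\Omega}|f(y)|\big(\int_{\Omega}G_{y}(x)\,d\nu(x)\big)\,d\mu(y)\le\overline{C}'\,\|f\|_{L^{1}(\Omega,\mu)}$. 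For $1<p<\infty$ I would split $G_{y}(x)=G_{y}(x)^{1/p'}G_{y}(x)^{1/p}$ with $1/p+1/p'=1$ and apply Hölder's inequality in $y$ with respect to $\mu$:
\[
|G_{\mu}f(x)|\le\Big(\int_{\Omega}G_{y}(x)\,d\mu(y)\Big)^{1/p'}\Big(\int_{\Omega}G_{y}(x)\,|f(y)|^{p}\,d\mu(y)\Big)^{1/p}\le\overline{C}^{\,1/p'}\Big(\int_{\Omega}G_{y}(x)\,|f(y)|^{p}\,d\mu(y)\Big)^{1/p}.
\]
Raising to the $p$-th power, integrating in $x$ against $\nu$, applying Tonelli, and invoking $\sup_{y}\int_{\Omega}G_{y}(x)\,d\nu(x)\le\overline{C}'$ yields $\|G_{\mu}f\|_{L^{p}(\Omega)}^{p}\le\overline{C}^{\,p/p'}\overline{C}'\,\|f\|_{L^{p}(\Omega,\mu)}^{p}$, which is (\ref{eqnu}) with $\widehat{C}:=\max\{\overline{C},\,\overline{C}',\,\overline{C}^{\,1/p'}(\overline{C}')^{1/p}\}$.

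The only step carrying any real content is establishing the $\nu$-side uniform bound $\sup_{y}\int_{\Omega}G_{y}(x)\,d\nu(x)<\infty$; once that and (\ref{C2}) are available, the rest is a routine Schur/Hölder argument parallel to \cite[Proposition 4.2]{Hu-Lau-Ngai_2006}. The one geometric subtlety there is that the boundedness of $G_{y}(x)$ for $\rho(x,y)$ bounded below must be uniform in $y$, which is why we use that $\Omega$ is bounded with smooth boundary (so $G_{y}$ extends continuously to $\partial\Omega$ and vanishes there) together with the near-diagonal estimate of Proposition \ref{pg}(a) and volume comparison; this is also the precise place where the hypothesis $\underline{\dim}_{\infty}(\nu)>d-2$ of Theorem \ref{thcon} (which holds with room to spare, as $\underline{\dim}_{\infty}(\nu)\ge d$) enters.
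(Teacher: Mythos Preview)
Your proposal is correct and follows essentially the same approach as the paper: the paper invokes Proposition~\ref{prop4} applied to $\nu$ (using the hypothesis $\underline{\dim}_{\infty}(\nu)>d-2$) to get the $\nu$-side bound $\sup_{y}\int_{\Omega}G_{y}(x)\,d\nu(x)<\infty$, and then runs exactly the Schur/H\"older splitting $G_{y}(x)=G_{y}(x)^{1/p'}G_{y}(x)^{1/p}$ that you describe for $1<p<\infty$, declaring the endpoint cases ``clear.'' Your write-up is in fact more explicit than the paper's, which has a minor typo (an inner $d\mu(y)$ that should be $d\nu(x)$) in the displayed computation.
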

\begin{proof}
By Proposition \ref{prop4}, condition (\ref{C2}) holds for $\nu$.  we have, for $1<p<\infty$,
\begin{align*}
\int_{\Omega}\big|G_{\mu}f(x)\big|^{p}\,d\nu&=\int_{\Omega}\Big|\int_{\Omega}G_{y}(x)f(y)\,d\mu(y)\Big|^{p}\,d\nu(x)\\
&\leq\int_{\Omega}\Big(\int_{\Omega}G_{y}(x)|f(y)|^{p}\,d\mu(y)\Big)\Big(\int_{\Omega}G_{y}(x)\,d\mu(y)\Big)^{p-1}\,d\nu(x)\\
&\leq\overline{C}_{1}^{p-1}\int_{\Omega}\Big(\int_{\Omega}G_{y}(x)\,d\mu(y)\Big)|f(y)|^{p}\,d\mu(y)\\
&\leq \overline{C}_{1}^{p-1}C\|f\|_{L^{p}(\Omega,\mu)}^{p}.
\end{align*}
Thus (\ref{eqnu}) holds for $p\in(0,\infty)$. The cases $p=1$ and $p=\infty$ are clear.
\end{proof}

\begin{prop}\label{prop6}
Assume the hypotheses of Theorem \ref{thcon}. Then for any $f\in L^{2}(\Omega,\mu)$, $G_{\mu}f$ is a weak solution of Dirichlet problem:
\begin{equation}\label{D2}
\begin{cases}
\Delta u=f\mu,~&\text{in}~\Omega,\\
u=0,~&\text{on}~\partial\Omega.
\end{cases}
\end{equation}
\end{prop}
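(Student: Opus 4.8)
The plan is to verify directly that $u:=G_{\mu}f$ satisfies Definition of a weak solution of \eqref{D2}, namely that $u\in L^{1}(\Omega)$ and that $\int_{\Omega}u\Delta v\,d\nu=-\int_{\Omega}v f\,d\mu$ for every $v\in C_{c}^{\infty}(\Omega)$. The membership $u\in L^{1}(\Omega)$ is immediate from Proposition \ref{propLp} with $p=1$ (or $p=2$ together with finiteness of $\nu(\Omega)$, since $\overline{\Omega}$ is compact), so the substance is the integral identity. First I would fix $v\in C_{c}^{\infty}(\Omega)$ and write
\[
\int_{\Omega}u(x)\Delta v(x)\,d\nu(x)=\int_{\Omega}\Big(\int_{\Omega}G_{y}(x)f(y)\,d\mu(y)\Big)\Delta v(x)\,d\nu(x).
\]
The first step is to justify interchanging the order of integration by Fubini's theorem: this is legitimate because $\Delta v$ is bounded with compact support, $\int_{\Omega}G_{y}(x)\,d\mu(y)\le\overline{C}$ by \eqref{C2}, and $f\in L^{2}(\Omega,\mu)\subseteq L^{1}(\Omega,\mu)$ (as $\mu$ is finite), so the double integral of the absolute value is finite. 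After swapping we obtain
\[
\int_{\Omega}u\Delta v\,d\nu=\int_{\Omega}f(y)\Big(\int_{\Omega}G_{y}(x)\Delta v(x)\,d\nu(x)\Big)\,d\mu(y).
\]

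The second and central step is to evaluate the inner integral $\int_{\Omega}G_{y}(x)\Delta v(x)\,d\nu(x)$ for fixed $y\in\Omega$, and to show it equals $-v(y)$. This is exactly the defining property \eqref{GD} of the Green function, $-\Delta G_{y}=\delta_{y}$ in the distributional sense, combined with the fact that $G_{y}$ vanishes on $\partial\Omega$ (Proposition \ref{pg}(d)) so that no boundary term appears when one integrates by parts against a test function that need not be compactly supported away from $y$. Concretely, I would excise a small geodesic ball $B^{M}_{\varepsilon}(y)$, apply Green's formula on $\Omega\setminus B^{M}_{\varepsilon}(y)$ where $G_{y}$ is smooth and harmonic (Proposition \ref{pg}(c)), and let $\varepsilon\to0$; the singularity estimates in Proposition \ref{pg}(a) (logarithmic for $d=2$, $\rho^{2-d}$ for $d\ge3$) show that the surface integral over $\partial B^{M}_{\varepsilon}(y)$ involving $G_{y}\,\partial_{n}v$ tends to $0$, while the surface integral involving $v\,\partial_{n}G_{y}$ converges to $-v(y)$ — this is the standard computation reproducing the fundamental-solution identity, and it is purely local so the Riemannian setting changes nothing essential. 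Alternatively one can simply cite \eqref{GD} directly as the statement that $-\int_{\Omega}G_{y}\Delta v\,d\nu=v(y)$ for all $v\in C_{c}^{\infty}(\Omega)$.

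Substituting the inner integral's value back gives
\[
\int_{\Omega}u\Delta v\,d\nu=\int_{\Omega}f(y)\big(-v(y)\big)\,d\mu(y)=-\int_{\Omega}v f\,d\mu,
\]
which is precisely \eqref{weak} with the measure $f\mu$ in place of $\mu$, completing the proof. I expect the main obstacle to be the rigorous justification of the Fubini interchange and of the limiting argument in the excision step — in particular checking that $G_{y}(\cdot)$ is $\nu$-integrable near its pole uniformly enough in $y$ (which follows from Proposition \ref{pg}(a), since $\rho^{2-d}$ and $\log\rho^{-1}$ are locally integrable in dimension $d$), and that the boundary contribution at $\partial\Omega$ genuinely vanishes, which is where the smoothness of $\partial\Omega$ and the continuity of $G_{y}$ up to $\partial\Omega$ from Proposition \ref{pg} are used. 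The rest is bookkeeping.
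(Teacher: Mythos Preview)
Your proposal is correct and follows essentially the same route as the paper's proof: establish $G_{\mu}f\in L^{2}(\Omega)$ (hence $L^{1}$) via Proposition~\ref{propLp}, swap the order of integration by Fubini, and then invoke the distributional identity \eqref{GD} to evaluate the inner integral as $-v(y)$. The paper is terser---it simply writes ``(Fubini)'' and applies \eqref{GD} without comment---whereas you spell out why Fubini is justified and sketch the standard excision argument behind \eqref{GD}; these are elaborations rather than a different strategy.
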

\begin{proof}
Combining Propositions \ref{prop3} and \ref{prop5}, we see that the Dirichlet problem (\ref{D2}) has a unique $L^{2}$ weak solution. It follows by Proposition \ref{propLp} that $G_{\mu}f\in L^{2}(\Omega)$. Moreover,
for any $\xi\in C_{c}^{\infty}(\Omega)$,
\begin{align}\label{Ge}
\int_{\Omega}\Big(\int_{\Omega}G_{y}(x)f(y)\,d\mu(y)\Big)\Delta\xi\,d\nu&=\int_{\Omega}\Big(\int_{\Omega}G_{y}(x)\,d(f\mu)(y)\Big)\Delta\xi\,d\nu\notag\\
&=\int_{\Omega}\int_{\Omega}G_{y}(x)\Delta\xi(x)\,d\nu(x)d(f\mu)(y)\qquad\qquad(\text{Fubini})\notag\\
&=\int_{\Omega}\xi(y)\,d(f\mu)(y).
\end{align}
Therefore, $\int_{\Omega}G_{y}(x)f(y)\,d\mu(y)$ is a weak solution of (\ref{D2}).
\end{proof}

\begin{prop}\label{prop++1}
Assume the hypotheses of Proposition \ref{prop6}. Then there exists a smooth and harmonic function $h_{\mu}(f)\in L^{2}(\Omega,\mu)$ depending on $f$ and $\mu$ such that $(G_{\mu}f+h_{\mu}(f))\in W_{0}^{1,2}(\Omega)$.
\end{prop}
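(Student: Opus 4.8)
The plan is to realize $h_\mu(f)$ as the difference between the Sobolev weak solution of the Dirichlet problem \eqref{D2} and the Green potential $G_\mu f$, and then to apply Weyl's lemma to this difference.

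First I would observe that, by Proposition \ref{prop5}, $f\mu\in\big(W_0^{1,2}(\Omega)\big)'$, so Proposition \ref{prop3}, applied to the measure $f\mu$, yields a unique weak solution $w\in W_0^{1,2}(\Omega)$ of \eqref{D2}; using \eqref{eq14a} (with $\mu$ replaced by $f\mu$), this $w$ satisfies
$$\int_\Omega w\,\Delta v\,d\nu=-\int_\Omega vf\,d\mu\qquad\text{for all }v\in C_c^\infty(\Omega).$$
On the other hand, Proposition \ref{prop6} asserts that $G_\mu f$ is also a weak solution of \eqref{D2}, i.e.\ it satisfies the same integral identity, and by Proposition \ref{propLp} we have $G_\mu f\in L^2(\Omega)$; since $\overline\Omega$ is compact, $\nu(\Omega)<\infty$, so both $w$ and $G_\mu f$ lie in $L^1(\Omega)$. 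Then I would set $h_\mu(f):=w-G_\mu f\in L^1(\Omega)$; subtracting the two weak formulations gives $\int_\Omega h_\mu(f)\,\Delta v\,d\nu=0$ for all $v\in C_c^\infty(\Omega)$, so by Weyl's lemma (see, e.g., \cite[(3.11)]{Yau_1976} or \cite{Weyl_1940}) the function $h_\mu(f)$ agrees $\nu$-a.e.\ with a function that is smooth and harmonic on $\Omega$, and I take $h_\mu(f)$ to be this representative.

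It then remains to check that $h_\mu(f)\in L^2(\Omega,\mu)$. Since $\underline{\dim}_\infty(\mu)>d-2$, the measure $\mu$ satisfies (MPI) on $\Omega$, so, exactly as in the proof of Proposition \ref{prop5}, $w\in W_0^{1,2}(\Omega)$ determines a canonical $L^2(\Omega,\mu)$-representative; moreover $G_\mu f\in L^2(\Omega,\mu)$ because, as noted before Proposition \ref{propLp}, condition \eqref{C2} makes $G_\mu$ bounded on $L^2(\Omega,\mu)$. Hence $h_\mu(f)=w-G_\mu f\in L^2(\Omega,\mu)$, and by construction $G_\mu f+h_\mu(f)=w\in W_0^{1,2}(\Omega)$, which is the assertion; $h_\mu(f)$ depends on $f$ and $\mu$ only through $w$ and $G_\mu f$.

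I do not expect a serious obstacle, since once the earlier propositions are in hand the statement is essentially bookkeeping. The one point warranting care is matching the sign conventions in the two weak formulations so that \eqref{go} and $w$ solve \emph{the same} equation $\Delta u=f\mu$ on $\Omega$ (rather than equations differing by a sign or a factor), together with the care needed to ensure that a single function represents $w$ both in $L^1(\Omega,\nu)$ and, via its canonical representative, in $L^2(\Omega,\mu)$, so that the identity $G_\mu f+h_\mu(f)=w$ can be read simultaneously in $W_0^{1,2}(\Omega)$ and in $L^2(\Omega,\mu)$.
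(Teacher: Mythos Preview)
Your proposal is correct and follows essentially the same route as the paper's own proof: obtain the Sobolev weak solution $w\in W_0^{1,2}(\Omega)$ via Propositions~\ref{prop5} and~\ref{prop3}, subtract $G_\mu f$ (a weak solution by Proposition~\ref{prop6}), apply Weyl's lemma to the difference to get a smooth harmonic $h_\mu(f)$, and then use (MPI) together with the $L^2(\Omega,\mu)$-boundedness of $G_\mu$ to conclude $h_\mu(f)\in L^2(\Omega,\mu)$. Your added care about $L^1$-membership and about reading the identity $G_\mu f+h_\mu(f)=w$ simultaneously in $W_0^{1,2}(\Omega)$ and $L^2(\Omega,\mu)$ is appropriate but does not change the argument.
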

\begin{proof}
Since $\underline{\dim}_{\infty}(\mu)>d-2$ and $f\in L^{2}(\Omega,\mu)$, it follows from Proposition \ref{prop5}, we have $f\mu\in\big(W_{0}^{1,2}(\Omega)\big)'$. In view of Proposition \ref{prop3}, there exists a unique weak solution $u$ in $W_{0}^{1,2}(\Omega)$. Note that by Proposition \ref{prop6}, $G_{\mu}f$ is a weak solution of (\ref{DP2}). Hence, by (\ref{weak}), we have for any $\xi\in C_{c}^{\infty}(\Omega)$,
$$\int_{\Omega}(u-G_{\mu}f)\Delta\xi\,d\nu=0,$$
which implies $u-G_{\mu}f$ is harmonic in the sense of distribution. Therefore, by \cite[Section 4.1]{Grigoryan_1999} or Weyl's lemma (see, e.g., \cite[(3.11)]{Yau_1976},\cite{Weyl_1940}), $u-G_{\mu}f$ is smooth and harmonic on $\Omega$. Let $h_{\mu}f:=u-G_{\mu}f$. Since $u\in W_{0}^{1,2}(\Omega)$, (\ref{MPI}) implies $u$ has a representative in $L^{2}(\Omega,\mu)$. Combining this and the fact that $G_{\mu}f\in L^{2}(\Omega,\mu)$, we have $h_{\mu}(f)\in L^{2}(\Omega,\mu)$. Moreover, $h_{\mu}f+G_{\mu}f=u\in W_{0}^{1,2}(\Omega)$.
\end{proof}

Since $u\in W_{0}^{1,2}(\Omega)\subseteq L^{2}(\Omega)$ and by Proposition \ref{propLp}, $G_{\mu}f\in L^{2}(\Omega)$, we have $h_{\mu}(f)\in L^{2}(\Omega)$. Hence, we have the following remark.
\begin{rmk}
Assume the hypotheses of Proposition \ref{prop6}. Then for any $f\in L^{2}(\Omega,\mu)$, $h_{\mu}(f)\in L^{2}(\Omega)$.
\end{rmk}
\begin{theo}\label{thGmu}
Assume the hypotheses of Proposition \ref{prop6}. Then for any $f\in L^{2}(\Omega,\mu)$,
$$-\Delta_{\mu}(G_{\mu}f+h_{\mu}(f))=f.$$
Moreover, $\big(G_{\mu}+h_{\mu}\big)\big(L^{2}(\Omega,\mu)\big)\subseteq{\rm dom}(-\Delta_{\mu})$ and $G_{\mu}+h_{\mu}=-\Delta_{\mu}^{-1}$.
\end{theo}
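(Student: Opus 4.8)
The plan is to verify the identity $-\Delta_\mu(G_\mu f + h_\mu(f)) = f$ directly from the characterization of $\mathrm{dom}(-\Delta_\mu)$ recorded after \cite[Proposition 3.3]{Ngai-Ouyang_2024}, namely that $u \in \mathrm{dom}(-\Delta_\mu)$ with $-\Delta_\mu u = f$ precisely when $u \in \mathcal{F} \subseteq W_0^{1,2}(\Omega)$ and $\int_\Omega \langle \nabla u, \nabla v\rangle\,d\nu = \int_\Omega f v\,d\mu$ for all $v \in C_c^\infty(\Omega)$. By Proposition \ref{prop++1} we already know $u := G_\mu f + h_\mu(f) \in W_0^{1,2}(\Omega)$, and since $\underline{\dim}_\infty(\mu) > d-2$ guarantees (MPI) holds, $u$ lies in $\mathcal{F} = \mathrm{dom}(\mathcal{E})$ (recall (MPI) forces $W_0^{1,2}(\Omega)$-functions to have $L^2(\Omega,\mu)$-representatives and $\mathcal{F}$ is the associated closed subspace). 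So the only thing left is the weak-formulation identity.

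First I would recall from the proof of Proposition \ref{prop++1} that $u = G_\mu f + h_\mu(f)$ is, by construction, exactly the unique weak solution in $W_0^{1,2}(\Omega)$ of the Dirichlet problem (\ref{DP2}) with datum $f\mu$; indeed $h_\mu(f)$ was defined as $u_{\mathrm{weak}} - G_\mu f$ where $u_{\mathrm{weak}}$ is that Riesz-representation solution from Proposition \ref{prop3}. Hence by (\ref{eq14a}) applied with the measure $f\mu$ in place of $\mu$, we have for every $\varphi \in C_c^\infty(\Omega)$
\begin{equation*}
\int_\Omega \langle \nabla u, \nabla \varphi\rangle\,d\nu = \int_\Omega \varphi \, d(f\mu) = \int_\Omega f\varphi\,d\mu.
\end{equation*}
This is precisely the weak characterization of $-\Delta_\mu u = f$, so $u \in \mathrm{dom}(-\Delta_\mu)$ and $-\Delta_\mu(G_\mu f + h_\mu(f)) = f$.

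For the second assertion, the inclusion $(G_\mu + h_\mu)(L^2(\Omega,\mu)) \subseteq \mathrm{dom}(-\Delta_\mu)$ is immediate from the first part since $f$ ranges over all of $L^2(\Omega,\mu)$. To conclude $G_\mu + h_\mu = -\Delta_\mu^{-1}$ I would argue that $-\Delta_\mu$ is injective on its domain: by Proposition \ref{prop>0}, either $\partial\Omega \ne \emptyset$, in which case $\lambda_1 > 0$ and $0$ is not an eigenvalue so $-\Delta_\mu$ is injective, or — if one wants the full bounded-domain statement — one restricts as needed; in the setting of Theorem \ref{thcon} with smooth nonempty boundary $\partial\Omega$, injectivity holds. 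Then $-\Delta_\mu(G_\mu + h_\mu) = \mathrm{id}$ on $L^2(\Omega,\mu)$ combined with injectivity and the fact that $-\Delta_\mu$ maps $\mathrm{dom}(-\Delta_\mu)$ onto $L^2(\Omega,\mu)$ (again from \cite[Theorem 2.2]{Ngai-Ouyang_2024}, the spectral resolution with strictly positive eigenvalues) gives that $G_\mu + h_\mu$ is exactly the two-sided inverse.

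I expect the main obstacle to be purely bookkeeping rather than conceptual: one must be careful that the "weak solution" notion from Section 5 (test functions $C_c^\infty(\Omega)$, solution only in $L^1$ a priori) is compatible with the "$\mathrm{dom}(-\Delta_\mu)$" notion (solution in $\mathcal{F}$, tested against $C_c^\infty(\Omega)$ but phrased with $\nabla$ rather than $\Delta$), and the bridge between the two is the Green formula \cite[Lemma 4.4]{Grigoryan_2009} already invoked in (\ref{eq14a}), which requires $u \in W_0^{1,2}(\Omega)$ — precisely what Proposition \ref{prop++1} supplies. A secondary subtlety is justifying that $h_\mu(f)$, being harmonic, contributes nothing spurious: since $h_\mu(f) = u - G_\mu f$ with $u$ the genuine weak solution, this is automatic, but it is worth stating explicitly that one is not claiming $h_\mu(f) \in \mathrm{dom}(-\Delta_\mu)$ separately.
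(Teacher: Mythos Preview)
Your approach is essentially the same as the paper's, and in one respect cleaner: the paper verifies $-\Delta(G_\mu f + h_\mu(f)) = f\,d\mu$ by combining (\ref{Ge}) with the harmonicity of $h_\mu(f)$, whereas you observe directly that $u := G_\mu f + h_\mu(f)$ is by construction the Riesz solution of Proposition~\ref{prop3}, so (\ref{eq14}) already gives the weak identity. For the inverse statement the paper invokes \cite[Theorem 3.4]{Ngai-Ouyang_2024}, while you argue via injectivity from Proposition~\ref{prop>0}(a); both are fine.

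There is one genuine loose end. You write that $u \in W_0^{1,2}(\Omega)$ together with (MPI) already forces $u \in \mathcal{F} = \mathrm{dom}(\mathcal{E})$, with the parenthetical justification that (MPI) gives $L^2(\Omega,\mu)$-representatives. This is not sufficient: in general $\mathcal{F}$ is the orthogonal complement in $W_0^{1,2}(\Omega)$ of the kernel $\mathcal{N}$ of the map $W_0^{1,2}(\Omega) \to L^2(\Omega,\mu)$, and $\mathcal{N}$ need not be trivial, so membership in $W_0^{1,2}(\Omega)$ alone does not place $u$ in $\mathcal{F}$. The paper handles this by citing the argument of \cite[Theorem~1.3]{Hu-Lau-Ngai_2006}. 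In your setup the fix is immediate once stated: since $u$ satisfies (\ref{eq14}) for \emph{all} $\varphi \in W_0^{1,2}(\Omega)$, taking $\varphi \in \mathcal{N}$ gives $\langle u,\varphi\rangle_{W_0^{1,2}(\Omega)} = \int_\Omega \varphi\,d(f\mu) = 0$ (the $L^2(\Omega,\mu)$-representative of $\varphi$ vanishes), so $u \perp \mathcal{N}$ and hence $u \in \mathcal{F}$. You should make this explicit rather than leaving it to the parenthetical.
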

\begin{proof}
By Proposition \ref{prop++1}, we have that for any $f\in L^{2}(\Omega,\mu)$, $(G_{\mu}f+h_{\mu}f)\in W_{0}^{1,2}(\Omega)$. Using a similar argument as in the proof of \cite[Theorem 1.3]{Hu-Lau-Ngai_2006}, we have $(G_{\mu}f+h_{\mu}(f))\in{\rm dom}(\mathcal{E})$. Moreover, it follows by (\ref{Ge}) and the fact that $h_{\mu}(f)$ is harmonic, we have $-\Delta\big(G_{\mu}f+h_{\mu}(f)\big)=f\,d\mu$ in the sense of distribution. Hence, for any $f\in L^{2}(\Omega,\mu)$, it follows from \cite[Proposition 3.3]{Ngai-Ouyang_2024}, we have that $(G_{\mu}f+h_{\mu}f)\in{\rm dom}(-\Delta_{\mu})$ and $-\Delta_{\mu}\big(G_{\mu}f+h_{\mu}(f)\big)=f$. Consequently, $(G_{\mu}+h_{\mu})(L^{2}(\Omega,\mu))\subseteq{\rm dom}(-\Delta_{\mu})$. In view of \cite[Theorem 3.4]{Ngai-Ouyang_2024}, we have $G_{\mu}+h_{\mu}=-\Delta_{\mu}^{-1}$.
\end{proof}

\section{Continuity of Eigenfunctions on bounded domain}

Throughout this section, let $d$, $M$, $\Omega$, $\mu$ be as in Theorem \ref{thcon}. We will study the continuity of eigenfunctions of $-\Delta_{\mu}$ and prove Theorem \ref{thcon}.

Let $G_{y}(x)$ be the Green function defined in (\ref{GD}). In this section, we will make frequent use of the properties of Green function in Proposition \ref{pg}. For convenience, for integer $k\geq1$, we let
\begin{equation}\label{Qk}
Q_{k}(y):=\big\{x\in\Omega:\rho(x,y)\in[2^{-k},2^{-(k-1)}]\big\},
\end{equation}
and let
\begin{equation}\label{Nr}
V_{r}(y):=B^{M}_{r}(y)\cap\Omega=\{x\in\Omega:\rho(x,y)<r\}.
\end{equation}
For any $y\in\Omega$, Proposition \ref{pg}(c) implies that the Green function is harmonic on $\Omega\setminus\{y\}$. Since $G_{y}(x)$ is continuous up to $\partial\Omega$ and vanishes on $\partial\Omega$, it follows by the maximum principle (see, e.g., \cite[Section XII-11]{Chavel_1999}) that
\begin{equation}\label{decreasing}
\sup_{\Omega\setminus V_{r}(y)}G_{y}(x)=\max_{x\in\partial V_{r}(y)}G_{y}(x)
\end{equation}
for $r>0$ such that $\Omega\setminus V_{r}(y)\neq\emptyset$.
\begin{lem}\label{lem2}
Let $d=2$ and assume that the hypotheses of Theorem \ref{thGmu} are satisfied. Let $G_{y}(\cdot)$ be the Dirichlet Green function on $\Omega$. Then $G_{y}(\cdot)\in L^{2}(\Omega,\mu)$ for each $y\in\Omega$.
\end{lem}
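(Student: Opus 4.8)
The plan is to show that $\int_{\Omega}G_y(x)^2\,d\mu(x)<\infty$ by splitting $\Omega$ into the dyadic annuli $Q_k(y)$ from \eqref{Qk} together with the ``far'' region where $\rho(x,y)$ is bounded below, and to use the logarithmic upper bound for $G_y$ from Proposition \ref{pg}(a) together with the $\alpha$-regularity of $\mu$ (with some $\alpha>d-2=0$, i.e. $\alpha>0$) coming from $\underline{\dim}_\infty(\mu)>d-2$ and \cite[Lemma 4.1]{Ngai-Ouyang_2024}. Concretely, first fix $y\in\Omega$ and a small radius $r_0$ so that the asymptotic bound $G_y(x)\le \widetilde C_2\log\rho(x,y)^{-1}$ holds for $\rho(x,y)<r_0$; on the complementary region $\Omega\setminus V_{r_0}(y)$ the function $G_y$ is bounded (by \eqref{decreasing} and continuity up to $\partial\Omega$), so that part of the integral is at most (that bound)$^2\cdot\mu(\Omega)<\infty$.

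For the singular part, I would write $V_{r_0}(y)\subseteq\bigcup_{k\ge k_0}Q_k(y)$ where $2^{-(k_0-1)}\le r_0$. On $Q_k(y)$ we have $\rho(x,y)\ge 2^{-k}$, hence $\log\rho(x,y)^{-1}\le k\log 2$, so $G_y(x)^2\le \widetilde C_2^2(\log 2)^2 k^2$ there. Meanwhile $Q_k(y)\subseteq B^M_{2^{-(k-1)}}(y)$, so by $\alpha$-regularity $\mu(Q_k(y))\le \mu(B^M_{2^{-(k-1)}}(y))\le C\,2^{-(k-1)\alpha}=C\,2^{\alpha}\,2^{-k\alpha}$. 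Therefore
\begin{equation*}
\int_{V_{r_0}(y)}G_y(x)^2\,d\mu(x)\le \sum_{k\ge k_0}\widetilde C_2^2(\log 2)^2 k^2\cdot C\,2^{\alpha}\,2^{-k\alpha},
\end{equation*}
and since $\alpha>0$ the series $\sum_k k^2 2^{-k\alpha}$ converges. Adding the two contributions gives $G_y(\cdot)\in L^2(\Omega,\mu)$.

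The main (really the only) obstacle is a bookkeeping one: making sure the asymptotic estimate in Proposition \ref{pg}(a) is applied only on a neighbourhood of $y$ where it is valid, and handling the transition region cleanly — this is why I separate $\Omega$ into $V_{r_0}(y)$ and its complement and invoke \eqref{decreasing} to control $G_y$ on the complement. A minor point to check is that $\mathrm{supp}(\mu)$ may meet $\partial\Omega$, but since $\mu$ is a finite measure and $G_y$ extends continuously (hence boundedly) to $\overline\Omega\setminus V_{r_0}(y)$ when $\partial\Omega$ is smooth, the far-region estimate is unaffected; near $y$, $\mathrm{supp}(\mu)$ is irrelevant to the argument since we only used $\mu(B^M_r(y))\le Cr^\alpha$, which holds for all balls. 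One should also note the estimate is for a fixed $y\in\Omega$, so $r_0$ and the bound on $G_y$ over the far region may depend on $y$; that is allowed by the statement, which only asserts membership in $L^2(\Omega,\mu)$ for each $y$. This mirrors the argument used to establish condition \eqref{C2} in Proposition \ref{prop4}, now with the square of $G_y$ and hence the extra factor $k^2$, which is harmless because of the geometric decay $2^{-k\alpha}$.
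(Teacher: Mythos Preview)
Your proof is correct and follows essentially the same route as the paper: split $\Omega$ into a small neighbourhood $V_{r_0}(y)$ and its complement, bound $G_y$ on the complement via \eqref{decreasing}, and on $V_{r_0}(y)$ use the logarithmic upper bound from Proposition~\ref{pg}(a) together with the dyadic decomposition into the annuli $Q_k(y)$ and the $\alpha$-regularity of $\mu$ to obtain the convergent series $\sum_k k^2 2^{-k\alpha}$. The only differences from the paper's argument are cosmetic (you start the sum at $k_0$ rather than $k=1$, and you add the helpful remarks about $\mathrm{supp}(\mu)$ possibly meeting $\partial\Omega$ and about the $y$-dependence of $r_0$).
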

\begin{proof}
Let $y\in\Omega$ be arbitrary and $V_{r}(y)$ be defined as in (\ref{Nr}). By Proposition \ref{pg}(a), there exists a constant $\widetilde{C}_{1}>0$ such that for some $r_{1}\in(0,1)$ sufficiently small and all $x\in V_{r_{1}}(y)$,
\begin{equation}\label{eq15}
G_{y}(x)\leq \widetilde{C}_{1}|\log{\rho(x,y)}|.
\end{equation}
By (\ref{decreasing}), $G_{y}(x)|_{\Omega\setminus V_{r_{1}}(y)}\leq\displaystyle\max_{x\in \partial V_{r_{1}}(y)}G_{y}(x)\leq\widetilde{C}_{1}|\log{r_{1}}|.$
Therefore, to prove the lemma, it suffices to prove that $\int_{V_{r_{1}}(y)}|G_{y}(x)|^{2}\,d\mu$ is finite. Let $Q_{k}(y)$ be defined as in (\ref{Qk}). By (\ref{eq15}) and the fact that $V_{r_{1}(y)}\subseteq V_{1}(y)$, we have
\begin{align}\label{eq15a}
\int_{V_{r_{1}}(y)}|G_{y}(x)|^{2}\,d\mu&\leq\widetilde{C}_{1}^{2}\int_{V_{r_{1}}(y)}|\log{\rho(x,y)}|^{2}\,d\mu=\widetilde{C}_{1}^{2}\sum_{k=1}^{\infty}\int_{Q_{k}(y)}|\log{\rho(x,y)}|^{2}\,d\mu\notag\\
&\leq\widetilde{C}_{1}^{2}\sum_{k=1}^{\infty}(\log{2^{k}})^{2}\mu(V_{2^{-(k-1)}}(y))\notag\\
&\leq\widetilde{C}_{1}^{2}(\log2)^{2}\sum_{k=1}^{\infty}k^{2}2^{-\alpha(k-1)}<\infty,\qquad\qquad(\text{by}~(\ref{eqregular}))
\end{align}
where $\alpha>0$ is a constant.
\end{proof}

\begin{rmk}\label{rmk1}
Assume the hypotheses of Lemma \ref{lem2}. Then there exists a constant $\overline{C}_{2}>0$ such that for all $y\in\Omega$, $\|G_{y}(\cdot)\|_{L^{2}(\Omega,\mu)}\leq\overline{C}_{2}$.
\end{rmk}
\begin{proof}
We use the notations and proof of Lemma \ref{lem2}. By (\ref{decreasing}), for any fixed $y\in\Omega$ and all $x\in\Omega\setminus V_{r_{1}}(y)$, we have $G_{y}(x)\leq \max_{z\in\partial V_{r_{1}}(y)}G_{y}(z)\leq\widetilde{C}_{1}|\log{r_{1}}|.$
 Note that this bound is independent of the choice of $y$. Combining this and (\ref{eq15a}), we have
$$\|G_{y}(\cdot)\|_{L^{2}(\Omega,\mu)}\leq\left(\widetilde{C}_{1}^{2}(\log2)^{2}\sum_{k=1}^{\infty}k^{2}2^{-\alpha(k-1)}+\mu(\Omega)\cdot\widetilde{C}_{1}^{2}(\log{r_{1}})^{2}\right)^{\frac{1}{2}}=:\overline{C}_{2},$$
where $\overline{C}_{2}$ is independent of $y$.
\end{proof}

\begin{lem}\label{lem3}
Let $d\geq3$ and assume the hypotheses of Theorem \ref{thGmu}. Let $G_{y}(x)$ be the Dirichlet Green function on $\Omega$ and let $f\in{\rm dom}(-\Delta_{\mu})$. Then $G_{\mu}f^{2}$ is bounded.
\end{lem}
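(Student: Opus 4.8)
The plan is to show that $G_\mu f^2$ is bounded by splitting the defining integral near the singularity of the Green function and away from it, using the pointwise bound $\|f\|_{L^\infty(\Omega,\mu)}<\infty$ (which holds for eigenfunctions or, more generally here, for $f\in\mathrm{dom}(-\Delta_\mu)$ via the decomposition $f = -\Delta_\mu^{-1}(-\Delta_\mu f)$ and Proposition~\ref{propLp}, though for the lemma as stated one only needs $f^2\in L^1(\Omega,\mu)$ together with the uniform estimate). Concretely, for $x\in\Omega$ fixed, write
\[
(G_\mu f^2)(x)=\int_{\Omega}G_y(x)f(y)^2\,d\mu(y)=\int_{V_{r_1}(x)}G_y(x)f(y)^2\,d\mu(y)+\int_{\Omega\setminus V_{r_1}(x)}G_y(x)f(y)^2\,d\mu(y),
\]
where $r_1\in(0,1)$ is chosen small enough that the singularity estimate in Proposition~\ref{pg}(a) applies. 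On the outer piece, $G_y(x)=G_x(y)$ is harmonic away from $x$ and vanishes on $\partial\Omega$, so by \eqref{decreasing} it is bounded by $\max_{z\in\partial V_{r_1}(x)}G_x(z)\le \widetilde C_2\, r_1^{\,2-d}$, a bound uniform in $x$; hence this term is at most $\widetilde C_2 r_1^{2-d}\|f\|_{L^2(\Omega,\mu)}^2<\infty$.

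The inner piece is where the singularity order $\rho(x,y)^{2-d}$ enters, and this is the main point. Using Proposition~\ref{pg}(a), on $V_{r_1}(x)$ we have $G_y(x)\le \widetilde C_2\,\rho(x,y)^{2-d}$. Decompose $V_{r_1}(x)$ into the dyadic annuli $Q_k(x)$ of \eqref{Qk} for $k$ large, on which $\rho(x,y)^{2-d}\le 2^{(k-1)(d-2)}$, and apply the $\alpha$-regularity estimate \eqref{eqregular} with $\alpha>d-2$ (available because $\underline{\dim}_\infty(\mu)>d-2$, by \cite[Lemma 4.1]{Ngai-Ouyang_2024}): this gives
\[
\int_{V_{r_1}(x)}\rho(x,y)^{2-d}\,d\mu(y)\le\sum_{k\ge k_0}2^{(k-1)(d-2)}\mu\big(Q_k(x)\big)\le C\sum_{k\ge k_0}2^{(k-1)(d-2)}2^{-\alpha k}<\infty,
\]
the geometric series converging since $\alpha>d-2$, and the bound being independent of $x$. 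Multiplying by $\|f\|_{L^\infty(\Omega,\mu)}^2$ (if one uses the sup-norm of $f$) controls the inner integral; alternatively, keeping $f(y)^2$ inside and using that $\int_{V_{r_1}(x)}G_y(x)\,d\mu(y)$ is uniformly bounded one estimates $\int_{V_{r_1}(x)}G_y(x)f(y)^2\,d\mu\le \|f\|_{L^\infty(\Omega,\mu)}^2\cdot\sup_x\int_{V_{r_1}(x)}G_y(x)\,d\mu$.

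Combining the two pieces yields $\sup_{x\in\Omega}|(G_\mu f^2)(x)|\le C(r_1,d,\mu)\big(\|f\|_{L^2(\Omega,\mu)}^2+\|f\|_{L^\infty(\Omega,\mu)}^2\big)<\infty$, i.e.\ $G_\mu f^2$ is bounded, as claimed. The only genuine obstacle is verifying that the relevant norm of $f$ is finite: for a generic $f\in\mathrm{dom}(-\Delta_\mu)$ one should note that $\mathrm{dom}(-\Delta_\mu)\subseteq L^\infty(\Omega,\mu)$ — this follows from Theorem~\ref{thGmu} together with Proposition~\ref{propLp} applied with $p=\infty$, since $f=(G_\mu+h_\mu)(-\Delta_\mu f)$ with $-\Delta_\mu f\in L^2(\Omega,\mu)\subseteq L^\infty$? — if that embedding is not available one instead argues directly that $f^2\in L^1(\Omega,\mu)$ suffices once the kernel estimate $\sup_x\int_{V_{r_1}(x)}G_y(x)\,d\mu(y)<\infty$ is upgraded to a uniform-in-$x$ control of $\int_{V_{r_1}(x)}G_y(x)f(y)^2\,d\mu(y)$ via absolute continuity of the integral; I would present the $L^\infty$ route as the clean one and fall back on the dyadic estimate above for the uniform bound on the kernel.
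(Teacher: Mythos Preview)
Your argument has a genuine gap at the step where you invoke $\|f\|_{L^\infty(\Omega,\mu)}<\infty$. For a general $f\in\mathrm{dom}(-\Delta_\mu)$ this is not known at this stage of the paper; in fact, boundedness of elements of $\mathrm{dom}(-\Delta_\mu)$ is precisely what the chain Lemma~\ref{lem3} $\Rightarrow$ Proposition~\ref{bounded} $\Rightarrow$ Proposition~\ref{continuous} is working towards. Your attempted justification via Theorem~\ref{thGmu} and Proposition~\ref{propLp} with $p=\infty$ would require $-\Delta_\mu f\in L^\infty(\Omega,\mu)$, whereas you only know $-\Delta_\mu f\in L^2(\Omega,\mu)$, and $L^2(\Omega,\mu)\not\subseteq L^\infty(\Omega,\mu)$ (you rightly flag this with a question mark). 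The fallback via ``absolute continuity of the integral'' is also insufficient: absolute continuity tells you $\int_E f^2\,d\mu<\epsilon$ whenever $\mu(E)$ is small, but gives no \emph{rate}, so from $f^2\in L^1(\Omega,\mu)$ alone there is no way to control $\sum_k 2^{k(d-2)}\int_{Q_k(x)}f(y)^2\,d\mu(y)$ uniformly in $x$.

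The paper avoids the $L^\infty$ assumption by approximating $f$ in $L^2(\Omega,\mu)$ by $f_n\in C_c^\infty(\Omega)$ (available since $\mathrm{dom}(-\Delta_\mu)\subseteq W_0^{1,2}(\Omega)$ and $\underline{\dim}_\infty(\mu)>d-2$). Each $f_n$ is bounded, so $|G_\mu f_n^2|\le\|f_n\|_\infty^2\,\overline C$ directly from \eqref{C2}. The substance of the proof is then to bound $|G_\mu(f^2-f_n^2)|$ uniformly in $x$ for large $n$: the far piece is handled exactly as you propose, while on the near piece one uses $\|f^2-f_n^2\|_{L^1(\Omega,\mu)}\to 0$ together with the dyadic decomposition, choosing $n$ large (depending on the scale $k$) to force $\int_{Q_k}|f^2-f_n^2|\,d\mu\le 2^{-k(d-1)}$ and hence make the weighted sum converge. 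This supplies precisely the quantitative decay that raw absolute continuity cannot. Had $f\in L^\infty(\Omega,\mu)$ been available, your route would indeed be the one-line estimate $|G_\mu f^2|\le\|f\|_\infty^2\,\overline C$; the point of the lemma is that it is not, and the approximation argument is the missing idea.
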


\begin{proof}
Since $f\in{\rm Dom(-\Delta_{\mu})}\subseteq W_{0}^{1,2}(\Omega)$ and $\underline{\dim}_{\infty}(\mu)>d-2$, there exists $f_{n}\in C_{c}^{\infty}(\Omega)$ such that $f_{n}\to f$ in $L^{2}(\Omega,\mu)$ as $n\to\infty$. We claim that there exists some constant $\widetilde{C}_{3}>0$ such that
\begin{equation}\label{eq16}
\lim_{n\to\infty}|G_{\mu}f^{2}-G_{\mu}f^{2}_{n}|\leq\widetilde{C}_{3}.
\end{equation}
By the definition of $G_{\mu}f$, we have, for all $x\in\Omega$,
$$|G_{\mu}f^{2}(x)-G_{\mu}f^{2}_{n}(x)|\leq\int_{\Omega}G_{y}(x)|f^{2}(y)-f_{n}^{2}(y)|\,d\mu(y).$$
Hence, to prove (\ref{eq16}), it suffices to prove that for all $x\in\Omega$,
$$\lim_{n\to\infty}\int_{\Omega}G_{y}(x)|f^{2}(y)-f_{n}^{2}(y)|\,d\mu(y)\leq\widetilde{C}_{3}.$$
Note that, by H\"older's inequality,
\begin{equation}\label{eq17}
\int_{\Omega}|f^{2}(y)-f_{n}^{2}(y)|\,d\mu(y)\leq\|f-f_{n}\|_{L^{2}(\Omega,\mu)}\cdot\|f+f_{n}\|_{L^{2}(\Omega,\mu)}\to0
\end{equation}
as $n\to\infty$. By Proposition \ref{pg}(a), there exists $r_{2}\in(0,1)$ sufficiently small such that
\begin{equation}\label{eq18}
G_{y}(x)\leq\widetilde{C}_{2}|\rho(x,y)|^{-(d-2)}
\end{equation}
when $\rho(x,y)\leq r_{2}$. Let $r_{0}:=\sup_{x,y\in\Omega}\rho(x,y)$. Then
\begin{align}\label{eq18a}
\int_{\Omega}G_{y}(x)|f^{2}(y)-f_{n}^{2}(y)|\,d\mu(y)\leq&\int_{\rho(x,y)<r_{2}}G_{y}(x)|f^{2}(y)-f_{n}^{2}(y)|\,d\mu(y)\notag\\
&+\int_{r_{2}\leq\rho(x,y)\leq r_{0}}G_{y}(x)|f^{2}(y)-f_{n}^{2}(y)|\,d\mu(y).
\end{align}
The second integral on the right-hand side tends to $0$ by using (\ref{eq17}) and the fact that $G_{y}(x)$ is bounded on $\rho(x,y)\in[r_{2},r_{0}]$. Moreover, it follows by (\ref{decreasing}) that
$$G_{y}(x)|_{\Omega\setminus V_{r_{2}}(x)}\leq\max_{y\in\partial V_{r_{2}}(x)}G_{y}(x)\leq\widetilde{C}_{2}\rho(x,y)^{-(d-2)}|_{\partial V_{r_{2}}(x)}=\widetilde{C}_{2}r_{2}^{-(d-2)}.$$
Note that this bound is independent of $x$. Hence $G_{y}(x)$ is uniformly bounded on $\rho(x,y)\in[r_{2},r_{0}]$. Let $Q_{k}:=Q_{k}(x)$. The first integral in (\ref{eq18a}) can be estimated as follows:
\begin{align}\label{eq19}
\int_{\rho(x,y)<r_{2}}G_{y}(x)|f^{2}(y)-f_{n}^{2}(y)|\,d\mu(y)&\leq\widetilde{C}_{2}\int_{\rho(x,y)<r_{2}}\rho(x,y)^{-(d-2)}|f^{2}(y)-f_{n}^{2}(y)|\,d\mu(y)\notag\\
&\leq\widetilde{C}_{2}\sum_{k=1}^{\infty}\int_{Q_{k}}\rho(x,y)^{-(d-2)}|f^{2}(y)-f_{n}^{2}(y)|\,d\mu(y)\notag\\
&\leq\widetilde{C}_{2}\sum_{k=1}^{\infty}2^{k(d-2)}\int_{Q_{k}}|f^{2}(y)-f_{n}^{2}(y)|\,d\mu(y)
\end{align}
Let $N\in\mathbb{N}$ be arbitrary. Note that by (\ref{eq17}), for each $k\in\{1,\ldots,N\}$, there exists $n_{N}(k)\in\mathbb{N}^{+}$, independent of $x$, such that
$$\int_{Q_{k}}|f^{2}(y)-f^{2}_{n_{N}}(y)|\,d\mu\leq\int_{\Omega}|f^{2}(y)-f^{2}_{n_{N}}(y)|\,d\mu\leq2^{-k(d-1)}.$$
Hence,
$$\sum_{k=1}^{N}2^{k(d-2)}\int_{Q_{k}}|f^{2}(y)-f^{2}_{n_{N}(k)}(y)|\,d\mu\leq\sum_{k=1}^{N}2^{-k}.$$
Letting $N\to\infty$, we have

\begin{equation}\label{eq20}
\sum_{k=1}^{\infty}2^{k(d-2)}\int_{Q_{k}}|f^{2}(y)-f^{2}_{n_{N}(k)}(y)|\,d\mu(y)\leq\sum_{k=1}^{\infty}2^{-k}=1.
\end{equation}
Combining this with (\ref{eq19}) proves (\ref{eq16}). Note that (\ref{eq16}) implies that there exists an integer $N_{0}$ sufficiently large such that for all $n>N_{0}$ and all $x\in\Omega$,
$$|G_{\mu}\big(f^{2}(x)-f^{2}_{n}(x)\big)|\leq \widetilde{C}_{4},$$
where $\widetilde{C}_{4}$ is independent of $x$. In particular,
\begin{align}\label{eq22}
|G_{\mu}f^{2}(x)|&\leq|G_{\mu}f^{2}_{N_{0}+1}(x)|+\widetilde{C}_{4}\leq\|f^{2}_{N_{0}+1}\|_{L^{\infty}(\Omega)}\int_{\Omega}G_{y}(x)\,d\mu(y)+\widetilde{C}_{4}\notag\\
&\leq\overline{C}_{1}\|f^{2}_{N_{0}+1}\|_{L^{\infty}(\Omega)}+\widetilde{C}_{4}=:\widetilde{C}_{5},
\end{align}
which completes the proof.
\end{proof}

\begin{prop}\label{bounded}
Assume the hypotheses of Theorem \ref{thcon}. Let $f\in{\rm dom}(-\Delta_{\mu})$. Then $G_{\mu}f$ is bounded.
\end{prop}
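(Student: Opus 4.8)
The plan is to treat the cases $d=2$ and $d\ge3$ separately, since the local behaviour of the Dirichlet Green function differs (logarithmic versus power singularity), and in both cases to reduce the estimate to the uniform bound $\sup_{x\in\Omega}\int_{\Omega}G_{y}(x)\,d\mu(y)\le\overline{C}$ coming from Proposition \ref{prop4} (condition (\ref{C2})), used via the symmetry $G_{y}(x)=G_{x}(y)$ of Proposition \ref{pg}(b).

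\emph{Case $d=2$.} Here $f\in{\rm dom}(-\Delta_{\mu})\subseteq{\rm dom}(\mathcal{E})$, and since (MPI) holds, every element of ${\rm dom}(\mathcal{E})$ has a representative in $L^{2}(\Omega,\mu)$; thus $f\in L^{2}(\Omega,\mu)$. Using $G_{y}(x)=G_{x}(y)$ and Remark \ref{rmk1}, one has $\int_{\Omega}G_{y}(x)^{2}\,d\mu(y)=\int_{\Omega}G_{x}(y)^{2}\,d\mu(y)=\|G_{x}(\cdot)\|_{L^{2}(\Omega,\mu)}^{2}\le\overline{C}_{2}^{\,2}$ for every $x\in\Omega$. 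Then the Cauchy--Schwarz inequality gives $|G_{\mu}f(x)|\le\overline{C}_{2}\,\|f\|_{L^{2}(\Omega,\mu)}$ for all $x\in\Omega$, which is the desired uniform bound.

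\emph{Case $d\ge3$.} Since $G_{y}(x)\ge0$, I would apply the Cauchy--Schwarz inequality in the form
$$
|G_{\mu}f(x)|\le\int_{\Omega}G_{y}(x)\,|f(y)|\,d\mu(y)\le\Big(\int_{\Omega}G_{y}(x)\,d\mu(y)\Big)^{1/2}\Big(\int_{\Omega}G_{y}(x)\,f(y)^{2}\,d\mu(y)\Big)^{1/2},
$$
that is, $|G_{\mu}f(x)|^{2}\le\big(G_{\mu}1(x)\big)\big(G_{\mu}f^{2}(x)\big)$. By $G_{y}(x)=G_{x}(y)$ and Proposition \ref{prop4}, the first factor satisfies $G_{\mu}1(x)=\int_{\Omega}G_{x}(y)\,d\mu(y)\le\overline{C}$ uniformly in $x$; the second factor is bounded by Lemma \ref{lem3}, which applies precisely because $f\in{\rm dom}(-\Delta_{\mu})$. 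Combining these two bounds yields a constant bounding $G_{\mu}f$ on all of $\Omega$.

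The argument is essentially an assembly of estimates already established, so I do not expect a serious obstacle. The one point needing a little care is the case $d=2$: there one cannot invoke Lemma \ref{lem3} directly and must instead exploit the symmetry of the Dirichlet Green function together with the uniform $L^{2}(\Omega,\mu)$-bound of Remark \ref{rmk1}. One should also check the routine positivity/measurability facts that make the Cauchy--Schwarz splitting in the $d\ge3$ case legitimate (so that $G_{y}(x)f(y)=\sqrt{G_{y}(x)}\cdot\sqrt{G_{y}(x)}\,f(y)$ may be paired as indicated).
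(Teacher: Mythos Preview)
Your proposal is correct and follows essentially the same approach as the paper: the case $d=2$ is handled by Cauchy--Schwarz together with the uniform $L^{2}(\Omega,\mu)$-bound on $G_{x}(\cdot)$ from Remark \ref{rmk1}, and the case $d\ge3$ by the weighted Cauchy--Schwarz splitting $|G_{\mu}f(x)|^{2}\le(G_{\mu}1)(x)\,(G_{\mu}f^{2})(x)$ combined with condition (\ref{C2}) and Lemma \ref{lem3}. The only difference is that you make the use of the symmetry $G_{y}(x)=G_{x}(y)$ explicit, whereas the paper invokes it tacitly.
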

\begin{proof}
We divide the proof into two cases.

\noindent{\em Case 1. $d=2$.} By Lemma \ref{lem2}, for any $y\in\Omega$, $G_{y}\in L^{2}(\Omega,\mu)$. Hence,
\begin{align*}
|G_{\mu}f(x)|&=\Big|\int_{\Omega}G_{y}(x)f(y)\,d\mu(y)\Big|\leq\|G_{x}(\cdot)\|_{L^{2}(\Omega,\mu)}\|f\|_{L^{2}(\Omega,\mu)}\\
&\leq\overline{C}_{2}\|f\|_{L^{2}(\Omega,\mu)}.\qquad\qquad\qquad{(\text{by~Remark~\ref{rmk1}}})
\end{align*}

\noindent{\em Case 2. $d\geq3$.} By Lemma \ref{lem3}, there exists some constant $\widetilde{C}_{5}>0$ such that for any $f\in{\rm dom}(-\Delta_{\mu})$ and all $x\in\Omega$,
\begin{equation}\label{eq22}
|G_{\mu}f^{2}(x)|\leq\widetilde{C}_{5}.
\end{equation}
Hence,
\begin{align*}
|G_{\mu}f(x)|^{2}&=\Big|\int_{\Omega}G_{y}(x)f(y)\,d\mu(y)\Big|^{2}\leq\Big(\int_{\Omega}|G_{y}^{1/2}(x)f(y)\cdot G_{y}^{1/2}(x)|\,d\mu(y)\Big)^{2}\\
&\leq\int_{\Omega}G_{y}(x)f^{2}(y)\,d\mu(y)\cdot\int_{\Omega}G_{y}(x)\,d\mu(y)\\
&\leq\widetilde{C}_{5}\cdot\overline{C}.\qquad\qquad\qquad\qquad\qquad(\text{by~(\ref{C2})})
\end{align*}
Combining Cases 1 and 2 completes the proof.
\end{proof}

The following proposition is obvious; we omit the proof.
\begin{prop}\label{propnoatomic}
Let $\mathfrak{X}$ be a complete metric space and $\Omega\subseteq\mathfrak{X}$ be a precompact subset. Let $\mu$ be a positive finite regular Borel measure defined on $\mathfrak{X}$ with compact support $supp(\mu)\subseteq\overline{\Omega}$ and $\mu(\Omega)>0$. Let $\underline{{\rm dim}}_{\infty}(\mu)$ be defined as in (\ref{dim}). If $\underline{{\rm dim}}_{\infty}(\mu)>0$, then $\mu$ has no atomic.
\end{prop}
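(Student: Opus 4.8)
The plan is to prove the contrapositive by contradiction: assuming $\underline{\dim}_\infty(\mu)>0$, I will show that if $\mu$ had an atom, the mass concentrated there would obstruct the polynomial-in-$\delta$ decay of $\sup_x\mu(B_\delta(x))$ that a positive lower $L^\infty$-dimension forces, yielding $\underline{\dim}_\infty(\mu)\le0$, a contradiction.

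First I would fix the hypothetical atom: suppose $\mu(\{x_0\})=c>0$ for some $x_0\in\mathfrak X$. The key preliminary observation is that $x_0\in{\rm supp}(\mu)$, since every open neighborhood of $x_0$ contains $x_0$ and therefore has $\mu$-measure at least $c$; hence $x_0$ is an admissible center for the supremum in the definition \eqref{dim}. Next, for every $\delta>0$ we have $x_0\in B^{\mathfrak X}_\delta(x_0)$, so $\mu(B^{\mathfrak X}_\delta(x_0))\ge c$ and consequently $\sup_x\mu(B^{\mathfrak X}_\delta(x))\ge c$. Taking logarithms and restricting to $\delta\in(0,1)$, where $\ln\delta<0$, dividing by $\ln\delta$ reverses the inequality and gives
$$\frac{\ln\big(\sup_x\mu(B^{\mathfrak X}_\delta(x))\big)}{\ln\delta}\le\frac{\ln c}{\ln\delta}.$$
As $\delta\to0^+$ we have $\ln\delta\to-\infty$, so the right-hand side tends to $0$; passing to the $\liminf$ on the left then yields $\underline{\dim}_\infty(\mu)\le0$, contradicting the hypothesis. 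Therefore no atom exists and $\mu$ is non-atomic.

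I expect no genuine obstacle here; the proof is essentially a one-line unwinding of the definition. The only points deserving a sentence of care are the remark that an atom necessarily lies in ${\rm supp}(\mu)$ (so that it may legitimately serve as the center of the balls over which the supremum in \eqref{dim} is taken) and the sign reversal when dividing by the negative quantity $\ln\delta$. It is worth noting that completeness of $\mathfrak X$, precompactness of $\Omega$, and the finiteness and regularity of $\mu$ play no role in this implication; they merely fix the ambient setting in which $\underline{\dim}_\infty(\mu)$ and the preceding results are phrased.
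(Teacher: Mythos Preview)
Your argument is correct; the paper itself declares the proposition ``obvious'' and omits the proof entirely, so your write-up is precisely the natural justification one would supply. The two points you flag (that an atom must lie in ${\rm supp}(\mu)$, and the sign reversal when dividing by $\ln\delta<0$) are indeed the only places requiring any care.
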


\begin{prop}\label{continuous}
Assume the hypotheses of Theorem \ref{thcon} are satisfied. Let $f\in{\rm dom}(-\Delta_{\mu})$. Then $G_{\mu}f$ is continuous on $\Omega$.
\end{prop}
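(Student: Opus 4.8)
The plan is to show that $G_\mu f$ is continuous at every point $x_0 \in \Omega$ by controlling the ``bad part'' of the integral $\int_\Omega G_y(x) f(y)\,d\mu(y)$ coming from a small geodesic ball around $x_0$, and using the continuity of $G_y(x)$ in $x$ away from $y$ (Proposition \ref{pg}(c)) for the ``good part.'' First I would fix $x_0 \in \Omega$ and $\varepsilon > 0$, and split $G_\mu f(x) = \int_{V_r(x_0)} G_y(x) f(y)\,d\mu(y) + \int_{\Omega \setminus V_r(x_0)} G_y(x) f(y)\,d\mu(y)$ for a radius $r>0$ to be chosen small, where $V_r$ is as in (\ref{Nr}). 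For the second integral, when $x$ is restricted to the smaller ball $V_{r/2}(x_0)$, the integrand $G_y(x)$ is a continuous (indeed harmonic) function of $x$ on $V_{r/2}(x_0)$ uniformly in $y \in \Omega \setminus V_r(x_0)$, and it is bounded there by (\ref{decreasing}); combined with $f \in L^1(\Omega,\mu)$ (which holds since $f \in L^2(\Omega,\mu)$ and $\mu$ is finite) and dominated convergence, this part is continuous in $x$ at $x_0$.

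The main work is to make the first integral uniformly small, i.e.\ to show $\sup_{x \in \Omega} \int_{V_r(x_0)} G_y(x)|f(y)|\,d\mu(y) \to 0$ as $r \to 0^+$, uniformly in the base point in a neighborhood of $x_0$. Here I would separate the two dimensional regimes exactly as in the preceding lemmas. For $d = 2$, by Cauchy--Schwarz this integral is at most $\|G_x(\cdot)\|_{L^2(V_r(x_0),\mu)} \cdot \|f\|_{L^2(\Omega,\mu)}$, and the localized version of the estimate (\ref{eq15a}) in the proof of Lemma \ref{lem2} shows $\|G_x(\cdot)\|_{L^2(V_r(x_0),\mu)} \to 0$ as $r \to 0$ (the tail of the convergent series $\sum k^2 2^{-\alpha(k-1)}$ goes to zero), with a bound independent of $x$ by Remark \ref{rmk1}. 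For $d \geq 3$, I would instead use Cauchy--Schwarz in the form $|G_\mu f(x)|$-type splitting: write $\int_{V_r(x_0)} G_y(x)|f(y)|\,d\mu \leq \big(\int_{V_r(x_0)} G_y(x) f^2(y)\,d\mu\big)^{1/2}\big(\int_{V_r(x_0)} G_y(x)\,d\mu\big)^{1/2}$, bound the first factor by $\widetilde C_5^{1/2}$ using Lemma \ref{lem3}, and show the second factor $\int_{V_r(x_0)} G_y(x)\,d\mu \to 0$ as $r \to 0$ uniformly in $x$; this last fact follows from the $\alpha$-regularity (\ref{eqregular}) of $\mu$ with $\alpha > d-2$ together with the dyadic decomposition $\int_{V_r(x_0)} G_y(x)\,d\mu \leq \widetilde C_2 \sum_{2^{-k} < r} 2^{k(d-2)} \mu(Q_k(x)) \lesssim \sum_{2^{-k}<r} 2^{k(d-2)} 2^{-k\alpha} = \sum_{2^{-k}<r} 2^{-k(\alpha - (d-2))}$, whose tail tends to $0$. (I also need $\int_{\Omega\setminus V_r(x_0)} G_y(x)\,d\mu$ bounded, which is (\ref{C2}) from Proposition \ref{prop4}.)

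Putting the pieces together: given $\varepsilon$, choose $r$ so that the first integral is $< \varepsilon/3$ for all $x$ in a fixed neighborhood $W \subseteq V_{r/2}(x_0)$ of $x_0$ (using that the smallness estimates above are uniform over base points near $x_0$, since the constants $\widetilde C_1, \widetilde C_2, \widetilde C_5, \overline C$ do not depend on the center), then shrink $W$ further so that the second integral differs from its value at $x_0$ by $< \varepsilon/3$ using the continuity and dominated convergence argument; this gives $|G_\mu f(x) - G_\mu f(x_0)| < \varepsilon$ for $x \in W$. The main obstacle I anticipate is the bookkeeping to get all the smallness estimates \emph{uniform in the base point} (so that one fixed $r$ works as $x$ ranges over a neighborhood of $x_0$, not just at $x = x_0$); this is where I would lean on the fact, noted after Lemma \ref{lem3} and in Remark \ref{rmk1}, that the relevant constants are independent of the point, and on the observation that $V_r(x) \subseteq V_{2r}(x_0)$ when $x \in V_r(x_0)$, so a single slightly larger radius controls all nearby integrals simultaneously.
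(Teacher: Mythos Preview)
Your proposal is correct and follows essentially the same three-step scheme as the paper: split $G_\mu f$ at a small ball around the base point, use dominated convergence and harmonicity of $G_y(\cdot)$ away from the diagonal for the far piece, and bound the near piece via Cauchy--Schwarz (directly against $\|G_x(\cdot)\|_{L^2(\mu)}$ when $d=2$, and via the $G^{1/2}\cdot G^{1/2}$ split together with Lemma~\ref{lem3} when $d\ge 3$). The only notable difference is that the paper obtains the smallness of $\int_{B_r(z)}G_y(x)^2\,d\mu$ and $\int_{B_r(z)}G_y(x)\,d\mu$ by invoking non-atomicity of $\mu$ (Proposition~\ref{propnoatomic}) together with the integrability already proved, whereas you get it quantitatively from the tail of the dyadic series using $\alpha$-regularity; your route makes the uniformity in $x$ (which the paper states but does not fully justify) more transparent.
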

\begin{proof}
We divide the proof into three steps.

\noindent{\em Step 1.} We claim that for any $\epsilon>0$, there exists constant $\widetilde{r}>0$ and $f_{1}\in L^{2}(\Omega,\mu)$ such that $|G_{\mu}f_{1}|<\epsilon$.
In the case $d=2$, by Lemma \ref{lem2}, we see that for each $y\in\Omega$, $G_{y}(\cdot)\in L^{2}(\Omega,\mu)$. Moreover, by Remark \ref{rmk1}, there exists $\overline{C}_{2}$ such that for all $y\in\Omega$, $\|G_{y}(\cdot)\|_{L^{2}(\Omega,\mu)}\leq\overline{C}_{2}$. Since $\underline{\rm dim}_{\infty}(\mu)>d-2=0$, it follows by Proposition \ref{propnoatomic} that $\mu$ is a continuous measure. Therefore,
$$\lim_{r\to0^{+}}\int_{B_{r}(z)}|G_{y}(x)|^{2}\,d\mu(y)=0\qquad \text{for~all}~x\in\Omega.$$
Hence,
for any $\epsilon>0$, there exists $\widetilde{r}_{1}>0$ sufficiently small such that for all $x\in\Omega$,
\begin{equation}\label{eq+1}
\int_{B_{\widetilde{r}_{1}}(z)}|G_{y}(x)|^{2}\,d\mu(y)\leq\epsilon^{2}.
\end{equation}
Now consider the case $d\geq3$. Let $\epsilon>0$. Then by Proposition \ref{prop4}, (\ref{C2}), and the continuity of $\mu$ again, there exists $\widetilde{r}_{2}>0$ sufficiently small such that  for all $x\in\Omega$,
\begin{equation}\label{eq+2}
\int_{B_{\widetilde{r}_{2}}(z)}G_{y}(x)\,d\mu(y)<\epsilon^{2}.
\end{equation}
Let $\widetilde{r}:=\min\{\widetilde{r}_{1},\widetilde{r}_{2}\}$ and $f_{1}:=f\chi_{B_{\widetilde{r}}(z)}$. For the case $d=2$, by H\"{o}lder's inequality and (\ref{eq+1}), we have
\begin{align}\label{eq5.15}
|G_{\mu}f_{1}|&=\Big|\int_{\Omega}G_{y}(x)f_{1}(y)\,d\mu(y)\Big|\leq\|f_{1}\|_{L^{2}(\Omega,\mu)}\Big(\int_{B_{\widetilde{r}}(z)}G_{y}^{2}(x)\,d\mu(y)\Big)^{1/2}\notag\\
&\leq\epsilon\|f\|_{L^{2}(\Omega,\mu)}.
\end{align}
For the case $d\geq3$, by H\"{o}lder's inequality and (\ref{eq+2}), we have
\begin{align*}
\big|G_{\mu}f_{1}\big|^{2}&=\left|\int_{\Omega}G_{y}(x)f_{1}(y)\,d\mu(y)\right|^{2}\leq\int_{B_{\widetilde{r}}(z)}G_{y}(x)f_{1}^{2}(y)\,d\mu(y)\cdot\int_{B_{\widetilde{r}}(z)}G_{y}(x)\,d\mu(x)\\
&\leq\epsilon^{2}\int_{\Omega}G_{y}(x)f^{2}(y)\,d\mu(y).
\end{align*}
Combining this and (\ref{eq22}), we have
\begin{equation}\label{eq(5.33)}
\big|G_{\mu}f_{1}\big|\leq\widetilde{C}_{5}\cdot\epsilon.
\end{equation}
Therefore, combining (\ref{eq5.15}) and (\ref{eq(5.33)}) proves the claim.

{\em Step 2.} Let $f_{2}:=f-f_{1}$. We claim that $G_{\mu}f_{2}$ is continuous at $z$. In fact, by Proposition \ref{pg}(a), for any $y\in\Omega\setminus B_{\widetilde{r}}(z)$, $G_{y}(z)$ is continuous at $z$, i.e.,
\begin{equation}\label{eq5.15aa}
\lim_{x\to z}G_{y}(x)=G_{y}(z).
\end{equation}
Note that there exists $\delta\in(0,\widetilde{r}/2)$ such that for any $x\in B_{\delta}(z)$ and all $y\in\Omega\setminus B_{\widetilde{r}}(z)$
$$|G_{y}(x)|\leq\widetilde{C}_{6}:=\max\left\{\widetilde{C}_{1}\Big|\log\frac{\widetilde{r}}{2}\Big|,\widetilde{C}_{2}\left(\frac{\widetilde{r}}{2}\right)^{2-d}\right\},$$
which implies that for all $y\in\Omega\setminus B_{\widetilde{r}}(z)$ and $x\in B_{\delta}(z)$
\begin{equation}\label{equbed}
|G_{y}(x)f_{2}(y)|\leq \widetilde{C}_{6}|f_{2}(y)|.
\end{equation}
Moreover, $f_{2}\in L^{1}(\Omega,\mu)$ as $\|f_{2}\|_{L^{1}(\Omega,\mu)}\leq\|f\|_{L^{1}(\Omega,\mu)}$. Therefore, by (\ref{eq5.15aa}), (\ref{equbed}) and dominated convergence theorem, we have
\begin{align*}
\lim_{x\to z}G_{\mu}f_{2}(x)&=\lim_{x\to z}\int_{\Omega}G_{y}(x)f_{2}(y)\,d\mu(y)=\lim_{x\to z}\int_{\Omega\setminus B_{\widetilde{r}}(z)}G_{y}(x)f_{2}(y)\,d\mu(y)\\
&=\int_{\Omega\setminus B_{\widetilde{r}}(z)}G_{y}(z)f_{2}(y)\,d\mu(y)=G_{\mu}f_{2}(z).
\end{align*}
{\em Step 3.} By step 2, For any $\epsilon>0$, there exists $\delta>0$ such that for any $\widetilde{z}\in B_{\delta}(z)$, $\big|G_{\mu}f_{2}(z)-G_{\mu}f_{2}(\widetilde{z})\big|<\epsilon$. Combining this, step 1 and the definition of $f_{1}$ and $f_{2}$, we have
\begin{align*}
\left|G_{\mu}f(z)-G_{\mu}f(\widetilde{z})\right|&=\left|G_{\mu}f_{1}(z)+G_{\mu}f_{2}(z)-G_{\mu}f_{1}(\widetilde{z})-G_{\mu}f_{2}(\widetilde{z})\right|\\
&\leq\left|G_{\mu}f_{1}(z)\right|+\left|G_{\mu}f_{1}(\widetilde{z})\right|+\left|G_{\mu}f_{2}(z)-G_{\mu}f_{2}(\widetilde{z})\right|\leq3\epsilon,
\end{align*}
which shows that $G_{\mu}f$ is continuous at $z$. The arbitrary of $z$ implies that $G_{\mu}f\in C(\Omega)$.
\end{proof}

\begin{proof}[Proof of Theorem \ref{thcon}]
we claim that $u\in{\rm dom}(-\Delta_{\mu})$ is a $\lambda$-eigenfunction of $-\Delta_{\mu}$ if and only if $u=\lambda(G_{\mu}u+h_{\mu}u)$. On the one hand, if $u\in{\rm dom}(\Delta_{\mu})$ satisfies $u=\lambda(G_{\mu}u+h_{\mu}(u))$, then by Theorem \ref{thGmu}, we have $-\Delta_{\mu}u=-\lambda\Delta_{\mu}(G_{\mu}u+h_{\mu}(u))=\lambda u,$ which implies $u$ is a $\lambda$-eigenfunction of $-\Delta_{\mu}$. On the other hand, if $-\Delta_{\mu} u=\lambda u$ for some $\lambda>0$, then  by Theorem \ref{thGmu} again, we have
$$u=\lambda(-\Delta_{\mu})^{-1}u=\lambda(G_{\mu}u+h_{\mu}(u)).$$
Therefore, to prove the continuity of the eigenfunction $u$ on $\Omega$, it suffices to prove that $G_{\mu}u+h_{\mu}(u)$ is continuous, but this follows by combining Proposition \ref{continuous} and the fact that $h_{\mu}(u)$ is smooth on $\Omega$.
\end{proof}

\section{Continuous eigenfunctions on compact Riemannian manifolds}

In this section, we let $M$ be a compact connected smooth Riemannian manifold with $\partial M=\emptyset$ and $\mu$ be a positive finite Borel measure measure defined on $M$. We study the continuity of eigenfunctions of $-\Delta_{\mu}$ defined on $M$ and prove Theorem \ref{CCT}.

Let $\mathcal{M}(M)$ be the vector space of finite Borel measures on $M$. We consider the following equation with density $\mu\in\mathcal{M}(M)$:
\begin{equation}\label{2.1}
\Delta u=\mu.
\end{equation}
\begin{defi}
Let $M$ be a compact connected smooth Riemannian manifold with $\partial M=\emptyset$ and $\mu\in\mathcal{M}(M)$. We call $u: M\to\mathbb{R}$ a {\em weak solution} of (\ref{2.1}) if $u\in L^{1}(M)$ and
\begin{equation}\label{7.2}
\int_{M}u\Delta v\,d\nu=-\int_{M}v\,d\mu
\end{equation}
for all $v\in C_{c}^{\infty}(M)$ satisfying $\int_{M}v\,d\nu=0$.
\end{defi}

Let $C>0$ be a constant and {\rm span}$\{C\}$ be the subspace of $W^{1,2}(M)$ spanned by $C$. Let
\begin{equation}\label{WM}
\mathcal{W}(M):={\rm span}\{C\}^{\bot}=\left\{f\in W^{1,2}(M):\int_{M}f\,d\nu=0\right\}.
\end{equation}
Obviously, $\mathcal{W}(M)$ is a closed subspace of $W^{1,2}(M)$. Hence $\mathcal{W}(M)$ is a Hilbert space (see e.g., \cite[Theorem 3.2--4]{Kreyszig_1978}).

Let $M$ be a compact smooth Riemannian manifold. The Green function on $M$ satisfies
\begin{equation}\label{GOC}
\Delta G_{y}(x)=\delta_{x}(y)-\nu(M)^{-1}
\end{equation}
in the sense of distribution. We state the following properties of $G_{y}(x)$ without proof; details can be found in \cite[Theorem 4.13]{Aubin_1982}.

\begin{thm}\label{pg2}
Let $M$ be a compact connected smooth Riemannian manifold. There exists $G_{y}(x)$, a Green's function of the Laplacian which has the following properties:
\begin{enumerate}
\item[(a)]~For all functions $f\in C^{2}(M)$:
\begin{equation}\label{deltaun}
\int_{M}G_{y}(x)\Delta f(x)\,d\nu(x)=f(y)-\nu(M)^{-1}\int_{M}f(x)\,d\nu(x).
\end{equation}
\item[(b)]~ $G_{y}(x)$ is smooth on $(M\times M)\setminus\{(x,x): x\in M\}$.
\item[(c)]~There exists a constant $\widehat{C}$ such that
\begin{equation}
|G_{y}(x)|<
\begin{cases}
\widehat{C}(1+|\log{\rho(x,y)}|),&\text{for}~d=2,\\
\widehat{C}\rho(x,y)^{2-d},&\text{for}~d\geq3,
\end{cases}
\end{equation}
where $\rho(x,y):={\rm dist}(x,y)$ is the Riemannian distance between $x$ and $y$.

\item[(d)] There exists a constant $A$ such that $G_{y}(x)\geq A$. Since the Green function is defined up to a constant, we may choose one that is positive everywhere.

\item[(e)] $\int_{M}G_{y}(x)\,d\nu(x)$ is constant. We can choose $G_{y}(x)$ so that its integral equals zero.

\item[(f)] $G_{y}(x)=G_{x}(y)$.
\end{enumerate}
\end{thm}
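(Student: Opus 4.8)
The statement is classical — it is \cite[Theorem 4.13]{Aubin_1982}, so in the paper it suffices to cite it — but here is the construction I would give, via the Levi parametrix method, which also makes transparent where each of (a)--(f) comes from. Since $\partial M=\emptyset$ and $M$ is compact, $\int_M\Delta f\,d\nu=0$ for every $f\in C^2(M)$, so $-\Delta$ has kernel equal to the constants and range exactly $\mathcal W(M)$ of \eqref{WM}; the Green function is, up to an additive constant, the Schwartz kernel of the inverse of $-\Delta:\mathcal W(M)\to\mathcal W(M)$. First I would fix a cutoff $\chi\in C^\infty_c([0,\min\{{\rm inj}(M),1\}))$ with $\chi\equiv1$ near $0$ and set, in geodesic normal coordinates about $y$,
\[
H(x,y):=\chi(\rho(x,y))\,\Phi_d(\rho(x,y)),
\]
extended by $0$, where $\Phi_d(r)=c_d r^{2-d}$ for $d\ge3$ and $\Phi_2(r)=c_2\log(1/r)$, with $c_d$ the constants for which $-\Delta(c_d|x|^{2-d})=\delta_0$ (resp. $-\Delta(c_2\log|x|^{-1})=\delta_0$) in $\mathbb R^d$. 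Because in geodesic normal coordinates the metric osculates the Euclidean one at $y$ (the Christoffel symbols vanish at the centre), one may arrange, after adding a smooth function to $H$, that
\[
-\Delta_xH(x,y)=\delta_y(x)-\nu(M)^{-1}-\Gamma_1(x,y),
\]
where $\Gamma_1$ is continuous off the diagonal, has a locally integrable diagonal singularity milder than that of $\Phi_d$, and satisfies $\int_M\Gamma_1(x,y)\,d\nu(x)=0$ for each $y$.

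Next I would iterate: put $\Gamma_{k+1}(x,y):=\int_M\Gamma_k(x,z)\,\Gamma_1(z,y)\,d\nu(z)$ and track how the diagonal singularity of $\Gamma_k$ becomes milder with $k$, so that $\Gamma_k$ is continuous for $k$ large and then smooth by elliptic regularity (Weyl's lemma); the alternating series $F:=\sum_{k\ge1}(-1)^k\Gamma_k$ then converges and solves $\Delta_xF(x,y)=-\Gamma_1(x,y)$. One then sets
\[
G_y(x):=H(x,y)+\int_M H(x,z)\,F(z,y)\,d\nu(z),
\]
possibly after a further smooth adjustment, so that $-\Delta_xG_y(x)=\delta_y(x)-\nu(M)^{-1}$ in the distributional sense, which is exactly \eqref{GOC}. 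Property (b) is then immediate ($H$ is smooth off the diagonal and the correction term is smooth by elliptic regularity), and property (c) is read off from the leading term $H$.

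The remaining items follow softly from \eqref{GOC}. For (a) (i.e. \eqref{deltaun}) I would integrate $G_y(x)$ against $\Delta f$, use self-adjointness of $\Delta$ on the closed manifold $M$ (no boundary terms), and approximate $\delta_y$. For (f): apply Green's identity to $G_{y_1}$ and $G_{y_2}$ on $M$ with small geodesic balls around $y_1,y_2$ removed, where both are smooth and satisfy $\Delta G=-\nu(M)^{-1}$; the interior terms cancel and the boundary spheres contribute $G_{y_1}(y_2)-G_{y_2}(y_1)$ in the limit, forcing symmetry. For (e): $\psi(y):=\int_M G_x(y)\,d\nu(x)$ satisfies $\Delta_y\psi\equiv0$ by differentiating \eqref{GOC} under the integral, hence $\psi$ is constant on the closed manifold $M$, and (f) then shows $\int_M G_y(x)\,d\nu(x)$ is independent of $y$; subtracting that constant normalizes the integral to $0$ without disturbing (a), since adding a constant to $G$ is harmless as $\int_M\Delta f\,d\nu=0$. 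For (d): on $M\setminus\{y\}$ the function $G_y$ satisfies $\Delta G_y=\nu(M)^{-1}>0$ and blows up at $y$, so it attains a finite minimum on compact $M$; since $(x,y)\mapsto G_y(x)-H(x,y)$ is continuous on $M\times M$ and $H\ge0$ under our choice of $\chi$, the bound $G_y(x)\ge A$ is uniform in $(x,y)$, and adding $|A|+1$ makes $G$ everywhere positive.

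The one genuinely technical point — the main obstacle — is the convergence of the Levi iteration: one must keep careful track of how the diagonal exponents improve under each convolution (a count on powers $\rho(x,y)^{2-d+k}$ for $d\ge3$, and an analogous logarithmic bookkeeping for $d=2$) to see that after finitely many steps $\Gamma_k$ is continuous and elliptic regularity takes over, and one must check that all the estimates are uniform in $y$. A route that bypasses this bookkeeping is to put $G_y(x)=\int_0^\infty\big(p_t(x,y)-\nu(M)^{-1}\big)\,dt$ for the heat kernel $p_t$: the integral converges because $p_t(x,y)\to\nu(M)^{-1}$ exponentially as $t\to\infty$ by the spectral gap on the closed manifold, while the short-time Gaussian bounds give exactly the singularity in (c), and (a)--(f) transfer from the well-known properties of $p_t$ (symmetry, smoothness off the diagonal, positivity, and $\int_M p_t(x,y)\,d\nu(x)=1$). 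Either way one obtains Theorem \ref{pg2}.
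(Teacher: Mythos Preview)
Your identification is correct: the paper does not prove this theorem at all but simply quotes \cite[Theorem~4.13]{Aubin_1982}, so there is nothing to compare against beyond the citation you already gave. Your parametrix sketch is precisely the construction Aubin carries out (iterated convolution of the local fundamental solution until the remainder is continuous, then elliptic regularity), and the heat-kernel alternative you mention is the other standard route; both are fine, though note that in your Step~1 the term $-\nu(M)^{-1}$ cannot be produced by ``adding a smooth function to $H$'' since $\int_M\Delta\psi\,d\nu=0$ for any smooth $\psi$---it enters instead because the iterated remainders $\Gamma_k$ each integrate to $1$ in $x$, and the limiting smooth remainder must therefore be the constant $\nu(M)^{-1}$.
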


In view of Theorem \ref{pg2}(e), we can choose a Green function $G_{y}(x)$ that is positive; moreover, there exists some constant $\widehat{C}_{0}$ such that for all $y\in M$,
\begin{equation}\label{C222}
\int_{M}G_{y}(x)\,d\nu(x)=\widehat{C}_{0}.
\end{equation}

Let $\mu$ be a positive finite Borel measure defined on $M$. We need the following condition: there exists some constant $C$ such that
\begin{equation}\label{C22}
\sup_{y\in M}\int_{M}G_{y}(x)\,d\mu(x)\leq C.
\end{equation}

A similar argument as that in the proof of \cite[Proposition 4.1]{Hu-Lau-Ngai_2006} implies that if $\underline{\dim}_{\infty}(\mu)>d-2$, then condition (\ref{C22}) holds. Using a similar argument as in Proposition \ref{prop3}, we can prove that if $\mu\in \big(\mathcal{W}(M)\big)'$, then there exists a unique weak solution $u\in\mathcal{W}(M)$ (see Proposition \ref{App2}). For $1\leq p\leq\infty$, We define the Green operator $G_{\mu}$ on $L^{p}(M,\mu)$ by
$$(G_{\mu}f)(x):=\int_{M}G_{y}(x)f(y)\,d\mu(y).$$
\begin{prop}\label{propLpp}
Assume the hypotheses of Theorem \ref{CCT}. Then there exists a constant $C>0$ such that for any $f\in L^{2}(M,\mu)$,
\begin{equation}\label{GuBB}
\|G_{\mu}f\|_{L^{p}(M,\mu)}\leq C\|f\|_{L^{p}(M,\mu)}
\end{equation}
and
\begin{equation}\label{GuBB2}
\|G_{\mu}f\|_{L^{p}(M)}\leq C\|f\|_{L^{p}(M,\mu)}.
\end{equation}
\end{prop}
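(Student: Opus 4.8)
The plan is to follow the proof of Proposition \ref{propLp} almost verbatim; the only new feature is that the outer integration is now performed against either $\mu$ or the Riemannian volume $\nu$, and each of these requires its own uniform bound on the Green potential. First I would record the two driving estimates. Since $\underline{\dim}_{\infty}(\mu)>d-2$, condition (\ref{C22}) holds: there is a constant $C>0$ with $\sup_{y\in M}\int_M G_y(x)\,d\mu(x)\le C$. Moreover, by Theorem \ref{pg2}(d),(e) the normalization of $G_y(x)$ has been fixed so that $G_y(x)>0$ and $\int_M G_y(x)\,d\nu(x)=\widehat{C}_0$ for every $y\in M$ (see (\ref{C222})). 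Both bounds are uniform in $y$, and by the symmetry $G_y(x)=G_x(y)$ (Theorem \ref{pg2}(f)) the bound (\ref{C22}) applies equally to $\sup_x\int_M G_y(x)\,d\mu(y)$.

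Next, for $1<p<\infty$ with conjugate exponent $p'$, I would split the kernel as $G_y(x)=G_y(x)^{1/p'}G_y(x)^{1/p}$ and apply H\"older's inequality in the variable $y$:
\[
|G_\mu f(x)|\le\Big(\int_M G_y(x)|f(y)|^p\,d\mu(y)\Big)^{1/p}\Big(\int_M G_y(x)\,d\mu(y)\Big)^{1/p'}\le C^{1/p'}\Big(\int_M G_y(x)|f(y)|^p\,d\mu(y)\Big)^{1/p}.
\]
Raising this to the $p$-th power, integrating against $d\mu(x)$, and applying Tonelli's theorem together with (\ref{C22}) to the inner integral $\int_M G_y(x)\,d\mu(x)\le C$ gives $\|G_\mu f\|_{L^p(M,\mu)}^p\le C^{p/p'+1}\|f\|_{L^p(M,\mu)}^p$, which is (\ref{GuBB}). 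Integrating instead against $d\nu(x)$ and using $\int_M G_y(x)\,d\nu(x)=\widehat{C}_0$ gives $\|G_\mu f\|_{L^p(M)}^p\le C^{p/p'}\widehat{C}_0\|f\|_{L^p(M,\mu)}^p$, which is (\ref{GuBB2}); enlarging $C$ if necessary produces a single constant as in the statement.

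The endpoint cases are immediate. For $p=\infty$, one has $|G_\mu f(x)|\le\|f\|_{L^\infty(M,\mu)}\int_M G_y(x)\,d\mu(y)\le C\|f\|_{L^\infty(M,\mu)}$ pointwise. For $p=1$, Tonelli's theorem gives $\int_M|G_\mu f|\,d\mu\le\int_M|f(y)|\big(\int_M G_y(x)\,d\mu(x)\big)\,d\mu(y)\le C\|f\|_{L^1(M,\mu)}$, and likewise $\int_M|G_\mu f|\,d\nu\le\widehat{C}_0\|f\|_{L^1(M,\mu)}$. I do not expect a genuine obstacle, since the argument is structurally identical to that of Proposition \ref{propLp}. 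The only points requiring a little care are that on a closed manifold the Green function is determined only up to an additive constant, so the positive normalization with $\int_M G_y(x)\,d\nu(x)=\widehat{C}_0$ must be fixed at the outset before the potential-theoretic estimates make sense, and that (\ref{C22}) must be invoked in both the inner ($y$) and the outer ($x$) integrations, which is legitimate only because of the symmetry in Theorem \ref{pg2}(f).
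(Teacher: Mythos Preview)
Your proposal is correct and follows essentially the same approach as the paper, which simply cites \cite[Proposition 4.2]{Hu-Lau-Ngai_2006} and the argument of Proposition \ref{propLp}. Your explicit treatment of the two outer measures---using (\ref{C22}) for the $\mu$-integration and the normalization (\ref{C222}) for the $\nu$-integration, together with the symmetry from Theorem \ref{pg2}(f)---is exactly what the omitted proof requires.
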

\begin{proof}
The proof is the same as that of \cite[Proposition 4.2]{Hu-Lau-Ngai_2006}, and is omitted.
\end{proof}

\begin{prop}\label{rmk-1}
Assume the hypotheses of Proposition \ref{propLpp}. Let $f\in L^{2}(M,\mu)$. Then $G_{\mu}f\in W^{1,2}(M)$.
\end{prop}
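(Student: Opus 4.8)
The plan is to show that $G_\mu f \in W^{1,2}(M)$ by realizing it, up to a harmonic (hence smooth) correction, as the unique weak solution guaranteed by Proposition \ref{App2}. First I would verify the hypotheses of that proposition: since $\underline{\dim}_\infty(\mu) > d-2$, condition (\ref{C22}) holds, and moreover, by the same mechanism used in Proposition \ref{prop5} (the (MPI) estimate $\|\overline\phi\|_{L^2(M,\mu)} \le C\|\nabla\phi\|_{L^2(M)}$, valid since $\underline{\dim}_\infty(\mu)>d-2$), for any $f \in L^2(M,\mu)$ the functional $\phi \mapsto \int_M \overline\phi\, f\, d\mu$ is bounded on $\mathcal{W}(M)$; hence $f\mu$, interpreted modulo the constant functions, defines an element of $(\mathcal{W}(M))'$. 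By Proposition \ref{App2} there is a unique $u \in \mathcal{W}(M) \subseteq W^{1,2}(M)$ that is a weak solution of $\Delta u = f\mu$ in the sense of (\ref{7.2}), i.e. $\int_M u\,\Delta v\,d\nu = -\int_M v\,f\,d\mu$ for all $v \in C_c^\infty(M)$ with $\int_M v\,d\nu = 0$.

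Next I would compare $u$ with $G_\mu f$. Using Theorem \ref{pg2}(a) together with Fubini's theorem (justified by (\ref{C22}) and $f \in L^2(M,\mu) \subseteq L^1(M,\mu)$), one computes, for every $v \in C^2(M)$,
\begin{align*}
\int_M (G_\mu f)(x)\,\Delta v(x)\,d\nu(x)
&= \int_M \int_M G_y(x)\,\Delta v(x)\,d\nu(x)\, f(y)\,d\mu(y)\\
&= \int_M \Big( v(y) - \nu(M)^{-1}\!\int_M v\,d\nu\Big) f(y)\,d\mu(y).
\end{align*}
Restricting to test functions $v \in C_c^\infty(M)$ with $\int_M v\,d\nu = 0$, the correction term drops out and we get $\int_M (G_\mu f)\,\Delta v\,d\nu = \int_M v\,f\,d\mu = -\int_M u\,\Delta v\,d\nu$. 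Hence $w := G_\mu f - u$ satisfies $\int_M w\,\Delta v\,d\nu = 0$ for all $v \in C_c^\infty(M)$ with $\int_M v\,d\nu = 0$; since adding a constant to $v$ changes $\Delta v$ by $0$, this identity in fact holds for all $v \in C_c^\infty(M)$, so $w$ is harmonic in the distributional sense. By Weyl's lemma (as invoked in Proposition \ref{PMempty}), $w$ is smooth and harmonic on $M$, and since $M$ is closed $w$ is constant. Note $G_\mu f \in L^1(M)$ by Proposition \ref{propLpp}, so $w = G_\mu f - u \in L^1(M)$ is a well-defined constant.

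Finally, $G_\mu f = u + w$ with $u \in \mathcal{W}(M) \subseteq W^{1,2}(M)$ and $w$ a constant; since constants lie in $W^{1,2}(M)$ (as $M$ is compact), we conclude $G_\mu f \in W^{1,2}(M)$. The main obstacle I anticipate is the bookkeeping around the ``modulo constants'' identification: one must be careful that $f\mu$ genuinely defines a bounded functional on the quotient space $\mathcal{W}(M)$ (not all of $W^{1,2}(M)$, where it need not vanish on constants) and that the comparison with $G_\mu f$ is carried out only against mean-zero test functions, so that Theorem \ref{pg2}(a)'s correction term is legitimately discarded; everything else is a routine application of Fubini, Weyl's lemma, and the fact that $M$ is closed.
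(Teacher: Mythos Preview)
Your proposal is correct and follows essentially the same approach as the paper: both show $f\mu\in(\mathcal{W}(M))'$ via the mechanism of Proposition~\ref{prop5} (this is Proposition~\ref{App3}), invoke Proposition~\ref{App2} to obtain the unique weak solution $u\in\mathcal{W}(M)$, verify via Fubini and Theorem~\ref{pg2}(a) that $G_\mu f$ is also a weak solution, extend the resulting identity from mean-zero test functions to all of $C_c^\infty(M)$, and apply Weyl's lemma on the closed manifold to conclude that $G_\mu f$ and $u$ differ by a constant. Apart from a harmless sign slip in your comparison step (from $\int(G_\mu f)\Delta v=-\int u\,\Delta v$ one gets $\int(G_\mu f+u)\Delta v=0$, not with $w=G_\mu f-u$; the paper has the same inconsistency), the argument is identical to the paper's.
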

\begin{proof}
Let $f\in L^{2}(M,\mu)$. We claim that $G_{\mu}f$ is a weak solution of
\begin{equation}\label{eqweak2}
\Delta u=f\mu.
\end{equation}
In fact, by (\ref{GuBB2}), $G_{\mu}f\in L^{2}(M)$. For any $\xi\in C_{c}^{\infty}(M)$ satisfying $\int_{M}\xi\,d\nu=0$, we have
\begin{align}\label{eqeqeq1}
\int_{M}\Big(&\int_{M}G_{y}(x)f(y)\,d\mu(y)\Big)\Delta\xi\,d\nu=\int_{M}\Big(\int_{M}G_{y}(x)\,d(f\mu)(y)\Big)\Delta\xi\,d\nu\notag\\
&=\int_{M}\int_{M}G_{y}(x)\Delta\xi(x)\,d\nu(x)d(f\mu)(y)\notag\\
&=\int_{M}\Big(\xi(y)-\nu(M)^{-1}\int_{M}\xi(x)\,d\nu(x)\Big)\,d(f\mu)(y)\qquad(\text{by~(\ref{deltaun})})\notag\\
&=\int_{M}\xi(y)\,d(f\mu)(y),\qquad\qquad\Big(\int_{M}\xi\,d\nu=0\Big)
\end{align}
which implies that $G_{\mu}f$ is a weak solution of (\ref{eqweak2}). Using similar methods as those in the proof of  Propositions \ref{prop3} and \ref{prop5}, one can prove that $f\mu\in\left(\mathcal{W}(M)\right)'$ (see Proposition \ref{App3}) and there exists a unique weak solution $u\in \mathcal{W}(M)$ of (\ref{2.1}) (see Proposition \ref{App2}). Thus, for all $\xi\in C_{c}^{\infty}(M)$ satisfying $\int_{M}\xi\,d\nu=0$, we have by (\ref{7.2})
\begin{equation}\label{smooth}
\int_{M}(u-G_{\mu}f)\Delta\xi\,d\nu=0.
\end{equation}
Note that for any $\phi\in C_{c}^{\infty}(M)$, $\zeta:=\phi-\nu(M)^{-1}\int_{M}\phi\,d\nu\in C_{c}^{\infty}(M)$ and $\int_{M}\zeta\,d\nu=0$. By (\ref{smooth}), we have
$$
\int_{M}(u-G_{\mu}f)\Delta\phi\,d\nu=\int_{M}(u-G_{\mu}f)\Delta\Big(\zeta+\nu(M)^{-1}\int_{M}\phi\,d\nu\Big)\,d\nu=0.$$
Hence, (\ref{smooth}) holds for any $\phi\in C_{c}^{\infty}(M)$. It follows by Weyl's lemma, $u-G_{\mu}f$ is smooth and harmonic. Since $M$ is compact, any harmonic function is constant. Thus, $u-G_{\mu}f=C$, a constant. The linearity of $W^{1,2}(M)$ and $u\in\mathcal{W}(M)\subseteq W^{1,2}(M)$ imply $G_{\mu}f\in W^{1,2}(M)$.
\end{proof}

Let $\widetilde{\mathcal{H}}_{\mu}:=\left\{u\in L^{2}(M,\mu):\int_{M}u\,d\mu=0\right\}$ and $D_{\mu}:=G_{\mu}(\widetilde{\mathcal{H}}_{\mu})$.

\begin{thm}\label{OG}
Assume the hypotheses of Theorem \ref{CCT}. Let $f\in \widetilde{\mathcal{H}}_{\mu}$. Then $G_{\mu}f\in {\rm dom}(\Delta_{\mu})$ and $\Delta_{\mu}(G_{\mu}f)=f.$ Consequently, $D_{\mu}\subseteq {\rm dom}(\Delta_{\mu})$ and $G_{\mu}|_{\widetilde{\mathcal{H}}_{\mu}}=\big(\Delta_{\mu}|_{ D_{\mu}}\big)^{-1}$.
\end{thm}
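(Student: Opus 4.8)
The plan is to imitate the structure used for Theorem \ref{thGmu}, but now exploiting that $M$ is compact and boundaryless so that the harmonic correction term collapses to a constant rather than to a genuine harmonic operator. First I would fix $f\in\widetilde{\mathcal{H}}_{\mu}$, so $\int_{M}f\,d\mu=0$. By Proposition \ref{rmk-1}, $G_{\mu}f\in W^{1,2}(M)$, and since $\underline{\dim}_{\infty}(\mu)>d-2$, the measure $\mu$ satisfies (MPI) on $M$, so $G_{\mu}f$ has a representative in $L^{2}(M,\mu)$; combined with the variational characterization of $\mathrm{dom}(\mathcal{E})$ (via an argument as in \cite[Theorem 1.3]{Hu-Lau-Ngai_2006}) this will give $G_{\mu}f\in\mathrm{dom}(\mathcal{E})$. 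Next, from the weak-solution identity \eqref{eqeqeq1} established in the proof of Proposition \ref{rmk-1}, we know $\int_{M}G_{\mu}f\,\Delta\xi\,d\nu=\int_{M}\xi\,d(f\mu)$ for all $\xi\in C_{c}^{\infty}(M)$ with $\int_{M}\xi\,d\nu=0$, and the reduction argument in that same proof (replacing a general $\phi$ by $\zeta=\phi-\nu(M)^{-1}\int_{M}\phi\,d\nu$) extends this to all $\phi\in C_{c}^{\infty}(M)$. Hence $-\Delta(G_{\mu}f)=-f\,d\mu$ in the distributional sense on $M$; equivalently $\int_{M}\langle\nabla(G_{\mu}f),\nabla\phi\rangle\,d\nu=\int_{M}f\phi\,d\mu$ for all test $\phi$. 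By \cite[Proposition 3.3]{Ngai-Ouyang_2024} (the analogue for $\partial M=\emptyset$), this means $G_{\mu}f\in\mathrm{dom}(\Delta_{\mu})$ with $\Delta_{\mu}(G_{\mu}f)=f$ — but here I must be careful: in the boundaryless case $\mathrm{dom}(\Delta_{\mu})$ and the eigenfunctions live in $\widetilde{\mathcal{H}}_{\mu}$, so I should also record that $G_{\mu}f$ has $\mu$-mean zero, or work with $\Delta_{\mu}$ on the appropriate quotient; this is where I expect the only real subtlety to lie.

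To handle that subtlety cleanly, I would observe that $\Delta_{\mu}$ on a closed manifold annihilates constants (Proposition \ref{PMempty}), so the constant $\phi_{0}=C$ spans the kernel, and $L^{2}(M,\mu)=\widetilde{\mathcal{H}}_{\mu}\oplus\mathrm{span}\{C\}$ is an orthogonal decomposition reducing $\Delta_{\mu}$. Restricting attention to $\widetilde{\mathcal{H}}_{\mu}$, the operator $\Delta_{\mu}|_{D_{\mu}}$ is the restriction whose inverse we seek. From the displayed computation above, $G_{\mu}$ maps $\widetilde{\mathcal{H}}_{\mu}$ into $\mathrm{dom}(\Delta_{\mu})$ and $\Delta_{\mu}\circ G_{\mu}=\mathrm{id}$ on $\widetilde{\mathcal{H}}_{\mu}$ (up to the mean-zero bookkeeping: note $\Delta_{\mu}(G_{\mu}f)=f$ already has $\mu$-mean zero, so the image lands where it should). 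For injectivity of $G_{\mu}|_{\widetilde{\mathcal{H}}_{\mu}}$: if $G_{\mu}f=c$ is constant then $f\,\mu=-\Delta(c)=0$ as distributions, hence $f=0$ in $L^{2}(M,\mu)$; since the image $D_{\mu}$ is defined as $G_{\mu}(\widetilde{\mathcal{H}}_{\mu})$, this shows $G_{\mu}|_{\widetilde{\mathcal{H}}_{\mu}}:\widetilde{\mathcal{H}}_{\mu}\to D_{\mu}$ is a bijection with two-sided inverse $\Delta_{\mu}|_{D_{\mu}}$ — we still need $G_{\mu}\circ\Delta_{\mu}=\mathrm{id}$ on $D_{\mu}$, which follows because on $D_{\mu}$ both $u$ and $G_{\mu}(\Delta_{\mu}u)$ are weak solutions of the same Dirichlet-type problem \eqref{2.1} with the same density $\mu$-data differing by a constant, and the uniqueness statement of Proposition \ref{App2} forces them to agree modulo a constant, which is pinned down by the mean-zero normalization on $\widetilde{\mathcal{H}}_{\mu}$.

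The main obstacle, then, is not analytic depth — every piece is either quoted or a light repetition of the bounded-domain argument — but rather the normalization bookkeeping: tracking when "$=f$" really means "$=f$" versus "$=f+\text{const}$" because the Green function on a closed manifold is only defined up to an additive constant and satisfies \eqref{GOC} with the $\nu(M)^{-1}$ correction. I would resolve this once and for all at the start by fixing, via Theorem \ref{pg2}(e), the Green function with $\int_{M}G_{y}(x)\,d\nu(x)=0$, and then consistently projecting onto $\widetilde{\mathcal{H}}_{\mu}$; with that convention in place the identities $\Delta_{\mu}G_{\mu}=\mathrm{id}$ and $G_{\mu}\Delta_{\mu}=\mathrm{id}$ on the respective mean-zero spaces follow, establishing $D_{\mu}\subseteq\mathrm{dom}(\Delta_{\mu})$ and $G_{\mu}|_{\widetilde{\mathcal{H}}_{\mu}}=\big(\Delta_{\mu}|_{D_{\mu}}\big)^{-1}$.
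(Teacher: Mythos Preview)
Your plan is essentially the paper's proof: use Proposition \ref{rmk-1} for $G_{\mu}f\in W^{1,2}(M)$; verify $G_{\mu}f\in\mathrm{dom}(\mathcal{E})$ by orthogonality to $(\mathrm{dom}(\mathcal{E}))^{\perp}$; use Fubini and (\ref{deltaun}) to check $\int_{M}G_{\mu}f\,\Delta v\,d\nu=\int_{M}fv\,d\mu$ for all $v\in C_{c}^{\infty}(M)$ (the $\nu(M)^{-1}\int_{M}v\,d\nu$ term vanishes precisely because $\int_{M}f\,d\mu=0$); then invoke \cite[Proposition 3.3]{Ngai-Ouyang_2024}.

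The one place you over-engineer is the final step. The paper dispatches $G_{\mu}\circ\Delta_{\mu}=\mathrm{id}$ on $D_{\mu}$ by the single algebraic line: if $u\in D_{\mu}$ then $u=G_{\mu}f$ for some $f\in\widetilde{\mathcal{H}}_{\mu}$, hence $G_{\mu}(\Delta_{\mu}u)=G_{\mu}(\Delta_{\mu}G_{\mu}f)=G_{\mu}f=u$. No uniqueness of weak solutions, no Proposition \ref{App2}, no constant-pinning is needed. Relatedly, your normalization worries are largely misplaced: the theorem asserts only $G_{\mu}f\in\mathrm{dom}(\Delta_{\mu})$, not $G_{\mu}f\in\widetilde{\mathcal{H}}_{\mu}$, and since $\Delta_{\mu}$ annihilates constants the additive constant in $G_{y}(x)$ never affects $\Delta_{\mu}G_{\mu}f$. (Your remark that ``$\mathrm{dom}(\Delta_{\mu})$ and the eigenfunctions live in $\widetilde{\mathcal{H}}_{\mu}$'' is also not right --- constants are in $\mathrm{dom}(\Delta_{\mu})$.) So the proposal is correct; just drop the last paragraph and replace your uniqueness argument by the one-liner above.
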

\begin{proof}
Let $f\in\widetilde{\mathcal{H}}_{\mu}$. We divide the proof into several steps.

\noindent{\em Step 1.} We prove $G_{\mu}f\in{\rm dom}(\mathcal{E})$. By Proposition \ref{rmk-1}, $G_{\mu}f\in W^{1,2}(M)$.
Moreover, let $u\in({\rm dom}(\mathcal{E}))^{\bot}$, which implies that $\|u\|_{L^{2}(M,\mu)}=0$ and there exists $u_{n}\in C_{c}^{\infty}(M)$ such that $u_{n}\to u$ in both $W^{1,2}(M)$ and $L^{2}(M,\mu)$. Thus,
\begin{align*}
\mathcal{E}(G_{\mu}f,u)&=\int_{M}\langle\nabla G_{\mu}f,\nabla u\rangle\,d\nu=\lim_{n\to\infty}\int_{M}\Big(\int_{M}G_{y}(x)f(y)\,d\mu(y)\Big)\Delta u_{n}(x)\,d\nu(x)\\
&=\lim_{n\to\infty}\int_{M}\Big(\int_{M}G_{y}(x)\Delta u_{n}(x)\,d\nu(x)\Big)f(y)\,d\mu(y)\qquad\qquad\quad(\text{Fubini})\\
&=\lim_{n\to\infty}\int_{M}\Big(u_{n}(y)-\frac{1}{\nu(M)}\int_{M}u_{n}(x)\,d\nu(x)\Big)f(y)\,d\mu(y)\qquad(\text{by}~(\ref{deltaun}))\\
&=\lim_{n\to\infty}\int_{M}u_{n}(y)f(y)\,d\mu(y).\qquad\qquad\qquad\qquad(f\in\widetilde{\mathcal{H}}_{\mu})
\end{align*}
Therefore,
$$\big|\mathcal{E}(G_{\mu}f,u)\big|\leq\lim_{n\to\infty}\|u_{n}\|_{L^{2}(M,\mu)}\|f\|_{L^{2}(M,\mu)}=0,$$
which implies that $G_{\mu}f\in{\rm dom}(\mathcal{E})$.

\noindent{\em Step 2.} We prove that $G_{\mu}f\in{\rm dom}(\Delta_{\mu})$ and $\Delta_{\mu}(G_{\mu}f)=f$. Since $G_{\mu}f\in{\rm dom}(\mathcal{E})$, by \cite[Proposition 3.3]{Ngai-Ouyang_2024}, to prove Step 2, it suffices to show $\Delta(G_{\mu}f)=f\,d\mu$ in the sense of distribution. For any $v\in C_{c}^{\infty}(M)$, by Fubini's theorem and (\ref{deltaun}), one can show that
$$\int_{M}v\Delta(G_{\mu}f)\,d\nu=\int_{M}(\Delta v)G_{\mu}f\,d\nu=\int_{M}fv\,d\mu,$$
proving that $\Delta(G_{\mu}f)=f\,d\mu$ in the sense of distribution. Therefore, Step 2 is proved.
Moreover, since $f\in\widetilde{H}_{\mu}$ is arbitrary, we have $D_{\mu}\subseteq{\rm dom}(\Delta_{\mu})$ and $\Delta_{\mu}(D_{\mu})=\widetilde{H}_{\mu}$.

\noindent{\em Step 3.} We prove that for any $u\in D_{\mu}$, $G_{\mu}(\Delta_{\mu}u)=u$. Let $u\in D_{\mu}=G_{\mu}(\widetilde{\mathcal{H}}_{\mu})$ be arbitrary. Then there exists $f\in\widetilde{\mathcal{H}}_{\mu}$ such that $G_{\mu}f=u$. Hence, by Step 2,
$$G_{\mu}(\Delta_{\mu}u)=G_{\mu}(\Delta_{\mu}(G_{\mu}f))=G_{\mu}f=u,$$
which proves Step 3. Combining Steps 2 and 3 proves that $G_{\mu}|_{\widetilde{\mathcal{H}}_{\mu}}=\big(\Delta_{\mu}|_{ D_{\mu}}\big)^{-1}$.
\end{proof}

\begin{rmk}
One cannot conclude that $G_{\mu}$ is the inverse of $-\Delta_{\mu}$ on $L^{2}(M,\mu)$, since for all constant functions $\phi=C$, $\Delta_{\mu}(G_{\mu}\phi)=G_{\mu}(\Delta_{\mu}\phi)=0$.
\end{rmk}

\begin{proof}[Proof of Theorem \ref{CCT}]
By Proposition \ref{prop>0}(a), a nonzero constant $C$ is an eigenfunction of $\Delta_{\mu}$ and is continuous. Therefore, we only consider nonconstant eigenfunctions. Note that each nonconstant eigenfunction $u$ belong to $ {\rm dom}(\Delta_{\mu})\cap\widetilde{\mathcal{H}}_{\mu}$. Let $f\in {\rm dom}(\Delta_{\mu})\cap\widetilde{\mathcal{H}}_{\mu}$, we claim that $f$ is a $\lambda$-eigenfunction of $\Delta_{\mu}$ if and only if $f=\lambda G_{\mu}f$. In fact, on the one hand, if $f\in {\rm dom}(\Delta_{\mu})\cap\widetilde{\mathcal{H}}_{\mu}$ satisfies $f=\lambda G_{\mu}f$, then by Theorem \ref{OG},
$$\Delta_{\mu}f=\lambda\Delta_{\mu}G_{\mu}f=\lambda f,$$
which implies that $f$ is a $\lambda$-eigenfunction of $\Delta_{\mu}$. On the other hand, if $f\in {\rm dom}(\Delta_{\mu})\cap\widetilde{\mathcal{H}}_{\mu}$ is a $\lambda$-eigenfunction, i.e., $\Delta_{\mu}f=\lambda f$, then
\begin{equation}\label{inverse}
G_{\mu}(\lambda f)=G_{\mu}(\Delta_{\mu}f).
\end{equation}
In view of Theorem \ref{OG}, $G_{\mu}|_{\widetilde{\mathcal{H}}_{\mu}}=\big(\Delta_{\mu}|_{ D_{\mu}}\big)^{-1}$. Hence, $G_{\mu}(\Delta_{\mu}f)=f.$ Combining this and (\ref{inverse}) implies that $f=\lambda G_{\mu}f$. Therefore, to prove Theorem \ref{CCT}, it suffices to prove that for any $f\in {\rm dom}(\Delta_{\mu})\cap\widetilde{H}_{\mu}$, $G_{\mu}f$ is continuous, but this follows by applying a similar argument as that in the proof of Proposition \ref{continuous}.
\end{proof}

\section{Continuous eigenfunctions on conformal Riemannian surfaces}

In this section, we let $M$ be a complete Riemann surface and $\Omega\subseteq M$ be a bounded domain which is conformally equivalent (see Definition \ref{CE}) to some bounded domain $\widetilde{\Omega}\subseteq\mathbb{R}^{2}$. Let $-\Delta_{\mu}$ be the Kre\u{\i}n-Feller defined on $\Omega$ as in Section 2. We give an alternative proof of the continuity of eigenfunctions of $-\Delta_{\mu}$ on $\Omega$. We also give an example of a continuous eigenfunction on a bounded domain of the sphere $\mathbb{S}^{2}$.

We first define conformal maps and the conformal equivalence of two Riemann surfaces. The following definitions can be found in, e.g., \cite{Freitag_2011}.

\begin{defi}
Let $X$ and $Y$ be two Riemann surfaces. A bijective map $f: X\to Y$ is called {\em conformal} if $f$ and $f^{-1}$ are both analytic.
\end{defi}

\begin{defi}\label{CE}
Two Riemann surfaces $X$ and $Y$ are called {\em conformally equivalent} if there exists a conformal map between $X$ and $Y$.
\end{defi}
Let $f:X\to Y$ be a conformal map, and $\mu$ be a positive finite Borel measure defined on $X$ with compact support ${\rm supp}(\mu)\subseteq\overline{\Omega}$, where $\Omega\subseteq X$ is a bounded domain. Let
\begin{equation}\label{eqmu}
\widetilde{\mu}:=\mu\circ f^{-1}
\end{equation}
be the positive finite Borel measure defined on $Y$ with compact support ${\rm supp}(\widetilde{\mu})\subseteq\overline{f(\Omega)}$. The proof of the following proposition is straightforward and is omitted.

\begin{prop}\label{prop-1}
Let $X$ and $Y$ be conformally equivalent Riemann surfaces with conformal map $f:X\to Y$. Let $\mu$ be a positive finite Borel measure defined on $X$ with compact support ${\rm supp}(\mu)\subseteq X$ and let $\widetilde{\mu}$ be defined as in (\ref{eqmu}) on $Y$. Assume $\underline{\rm dim}_{\infty}(\mu)>0$. Then $\underline{\rm dim}_{\infty}(\widetilde{\mu})>0$.
\end{prop}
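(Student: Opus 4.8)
The plan is to reduce the statement about the lower $L^{\infty}$-dimension of $\widetilde{\mu}=\mu\circ f^{-1}$ to a comparison of metric balls under the conformal map $f\colon X\to Y$. The key point is that a conformal map between Riemann surfaces, restricted to any relatively compact domain, is bi-Lipschitz with respect to the Riemannian distances. First I would fix a relatively compact neighborhood $K$ of $\overline{\Omega}$ in $X$ and a relatively compact neighborhood $L$ of $\overline{f(\Omega)}$ in $Y$ with $f(K)\subseteq L$ (possible after shrinking, using that $f$ is a homeomorphism and $\overline{\Omega}$ is compact). On $K$, the map $f$ is analytic with nowhere-vanishing derivative, and the conformal factor $\lambda(x)$ relating the pulled-back metric $f^{*}g_{Y}$ to $g_{X}$ is smooth and bounded above and below by positive constants $c_{1}\le\lambda\le c_{2}$ on the compact set $\overline{\Omega}$. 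Integrating along geodesics (and using that for points close enough together the minimizing geodesic stays in $K$), this yields constants $0<a\le b<\infty$ such that
\begin{equation}\label{bilip}
a\,\rho_{X}(x,y)\le\rho_{Y}(f(x),f(y))\le b\,\rho_{X}(x,y)
\end{equation}
for all $x,y$ in $\overline{\Omega}$ with $\rho_{X}(x,y)$ below some threshold $\delta_{0}$; since we only care about the liminf as $\delta\to0^{+}$, the small-scale estimate \eqref{bilip} is all that is needed.

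Next I would translate \eqref{bilip} into an inclusion between balls. Fix $q=f(p)\in{\rm supp}(\widetilde{\mu})$ with $p\in{\rm supp}(\mu)$, and let $\delta\in(0,a\delta_{0})$. From the lower bound in \eqref{bilip}, $\rho_{Y}(f(x),q)<\delta$ forces $\rho_{X}(x,p)<\delta/a$, hence
$$f^{-1}\big(B^{Y}_{\delta}(q)\big)\cap\overline{\Omega}\subseteq B^{X}_{\delta/a}(p).$$
Applying $\mu$ and using $\widetilde{\mu}(B^{Y}_{\delta}(q))=\mu(f^{-1}(B^{Y}_{\delta}(q)))$ together with ${\rm supp}(\mu)\subseteq\overline{\Omega}$ gives $\widetilde{\mu}(B^{Y}_{\delta}(q))\le\mu(B^{X}_{\delta/a}(p))\le\sup_{x}\mu(B^{X}_{\delta/a}(x))$. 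Taking the supremum over $q$ and then $\ln/\ln\delta$, and recalling $\ln\delta<0$ for small $\delta$, one obtains
$$\liminf_{\delta\to0^{+}}\frac{\ln\big(\sup_{q}\widetilde{\mu}(B^{Y}_{\delta}(q))\big)}{\ln\delta}\ \ge\ \liminf_{\delta\to0^{+}}\frac{\ln\big(\sup_{x}\mu(B^{X}_{\delta/a}(x))\big)}{\ln\delta},$$
and the change of variable $\delta'=\delta/a$ shows the right-hand side equals $\underline{\dim}_{\infty}(\mu)$ (the constant $\ln a$ is negligible against $\ln\delta'$). Hence $\underline{\dim}_{\infty}(\widetilde{\mu})\ge\underline{\dim}_{\infty}(\mu)>0$, which is the claim; in fact \eqref{bilip} gives both inequalities, so $\underline{\dim}_{\infty}(\widetilde{\mu})=\underline{\dim}_{\infty}(\mu)$.

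The main obstacle, and the only place requiring care, is justifying the small-scale bi-Lipschitz estimate \eqref{bilip} cleanly: one must argue that for $x,y$ sufficiently close in $\overline{\Omega}$ a minimizing geodesic between them remains inside the compact set $K$ where the conformal factor is controlled, and that the distance $\rho_{Y}$ computed in $Y$ agrees (for small scales) with the intrinsic distance of $f(\Omega)$ so that the pulled-back metric comparison applies. This is handled by a standard compactness/injectivity-radius argument: choose $\delta_{0}$ smaller than the injectivity radius of $\overline{\Omega}$ in $X$ and smaller than $\mathrm{dist}_{X}(\overline{\Omega},X\setminus K)$, and similarly on the $Y$ side. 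Once this is in place the rest is the routine ball-inclusion bookkeeping above, which is why the authors state that the proof is straightforward and omit it.
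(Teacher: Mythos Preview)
Your argument is correct and is precisely the kind of routine verification the authors had in mind: the paper omits the proof entirely, stating only that it ``is straightforward.'' Your reduction to a local bi-Lipschitz estimate on the compact support (via boundedness of the conformal factor, or more generally of $\|df\|$ and $\|df^{-1}\|$, on compact sets) followed by the ball-inclusion and $\ln/\ln\delta$ bookkeeping is the standard and expected route; the only cosmetic slip is that the proposition is stated with ${\rm supp}(\mu)$ rather than an auxiliary $\overline{\Omega}$, so you should phrase the compactness argument directly in terms of ${\rm supp}(\mu)$ and a relatively compact neighborhood of it.
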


Note that $\underline{\dim}_{\infty}(\mu)>0$ implies that $\Delta_{\mu}$ is well defined on some bounded domain of $M$.
\begin{thm}\label{th+}
Let $M_{2}$ be a complete smooth Riemann surface, $\Omega\subseteq M_{2}$ be a bounded domain which is conformally equivalent to a bounded domain $\widetilde{\Omega}\subseteq\mathbb{R}^{2}$ through a conformal map $f:\Omega\to \widetilde{\Omega}$ with  $|J_{f^{-1}}|$, the Jacobian determinant of $f^{-1}$, being bounded from above. Let $\mu$ be a positive finite Borel measure on $M_{2}$ with compact support ${\rm supp}(\mu)\subseteq\overline{\Omega}$ and with $\mu(\Omega)>0$. Let $\widetilde{\mu}$ be defined as in (\ref{eqmu}). Assume $\underline{\dim}_{\infty}(\mu)>0$. If we denote $\widetilde{u}$ be a $\lambda$-eigenfunction of $-\Delta_{\widetilde{\mu}}$ on $\widetilde{\Omega}$, then $u:=\widetilde{u}\circ f$ is a $\lambda$-eigenfunction of $-\Delta_{\mu}$ on $\Omega$. Consequently, $u$ is continuous on $\Omega$.
\end{thm}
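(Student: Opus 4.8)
The plan is to show that the conformal map $f$ intertwines the two Kre\u{\i}n--Feller operators, in the sense that $u \in \mathrm{dom}(-\Delta_\mu)$ with $-\Delta_\mu u = \lambda u$ precisely when $\widetilde u = u\circ f^{-1} \in \mathrm{dom}(-\Delta_{\widetilde\mu})$ with $-\Delta_{\widetilde\mu}\widetilde u = \lambda\widetilde u$; continuity of $u$ then follows from continuity of $\widetilde u$ (known from Theorem \ref{thcon} applied to $\widetilde\Omega \subseteq \mathbb{R}^2$, since $\underline{\dim}_\infty(\widetilde\mu)>0 = d-2$ by Proposition \ref{prop-1}) together with the fact that $f$ is a homeomorphism. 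The key analytic input is the conformal invariance of the Dirichlet energy in dimension two: if $f$ is conformal with conformal factor $e^{2\varphi}$ (so that $f^*g_{\mathrm{euc}} = e^{2\varphi} g$ in isothermal coordinates, equivalently the pullback metric is a pointwise rescaling of $g$), then for $v \in C_c^\infty(\widetilde\Omega)$,
\begin{equation*}
\int_{\widetilde\Omega} |\nabla \widetilde v|^2 \, dx = \int_\Omega \langle \nabla (\widetilde v \circ f), \nabla(\widetilde v\circ f)\rangle_g \, d\nu,
\end{equation*}
because the conformal weights from the metric on gradients and from the volume form cancel exactly when $\dim = 2$.

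First I would set up the dictionary between the two forms. Using that $f:\Omega \to \widetilde\Omega$ is a conformal diffeomorphism and the composition-with-$f$ map is a bijection between $C_c^\infty(\widetilde\Omega)$ and $C_c^\infty(\Omega)$ (and extends to an isometry-up-to-the-energy-identity above between $W_0^{1,2}(\widetilde\Omega)$ and $W_0^{1,2}(\Omega)$), I would check that the bilinear form $\mathcal{E}_{\widetilde\mu}(\widetilde v,\widetilde w) = \int_{\widetilde\Omega}\langle\nabla\widetilde v,\nabla\widetilde w\rangle\,dx$ equals $\mathcal{E}_\mu(\widetilde v\circ f, \widetilde w\circ f)$ by the displayed conformal-invariance identity, while on the $L^2$ side the change-of-variables formula gives $\int_{\widetilde\Omega}\widetilde v\,\widetilde w\,d\widetilde\mu = \int_\Omega (\widetilde v\circ f)(\widetilde w\circ f)\,d\mu$ directly from $\widetilde\mu = \mu\circ f^{-1}$. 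This shows that $v \mapsto v\circ f$ is a unitary map from $L^2(\widetilde\Omega,\widetilde\mu)$ onto $L^2(\Omega,\mu)$ that carries $\mathrm{dom}(\mathcal{E}_{\widetilde\mu})$ onto $\mathrm{dom}(\mathcal{E}_\mu)$ and preserves the forms; hence it carries the associated self-adjoint operator $-\Delta_{\widetilde\mu}$ onto $-\Delta_\mu$, so eigenfunctions correspond with the same eigenvalues. (The hypothesis that $|J_{f^{-1}}|$ is bounded above is what guarantees $\widetilde\mu$ is finite and that pullback of $L^2$-functions behaves well; one should also record that $\underline{\dim}_\infty(\widetilde\mu)>0$ via Proposition \ref{prop-1}, so that $-\Delta_{\widetilde\mu}$ is well defined.)

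Next, having the operator identification, I would invoke Theorem \ref{thcon} on $\widetilde\Omega\subseteq\mathbb{R}^2$ — whose hypotheses hold since $d=2$, $\underline{\dim}_\infty(\widetilde\mu)>0=d-2$, and $\underline{\dim}_\infty(\text{Lebesgue})=2>0=d-2$ — to conclude $\widetilde u \in C(\widetilde\Omega)$. Since $f:\Omega\to\widetilde\Omega$ is in particular a homeomorphism, $u = \widetilde u\circ f \in C(\Omega)$, which is the assertion.

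The main obstacle I expect is the rigorous verification of the conformal-invariance energy identity at the level of Sobolev spaces with the correct handling of the weak/distributional formulation: one must be careful that "$-\Delta_\mu u = \lambda u$" in the distributional sense of $\int_\Omega \langle\nabla u,\nabla v\rangle\,d\nu = \lambda\int_\Omega uv\,d\mu$ for all $v\in C_c^\infty(\Omega)$ transforms correctly under $f$, including checking that the conformal factor does not disturb the test-function space and that the (dimension-two) cancellation of weights is exact rather than merely up to a bounded factor. A secondary technical point is confirming that the pullback map indeed sends $W_0^{1,2}(\widetilde\Omega)$ into $W_0^{1,2}(\Omega)$ (density of smooth compactly supported functions is preserved because $f$ is a diffeomorphism), and that the closed subspace $\mathcal{F}$ on which $\mathcal{E}_\mu$ is defined corresponds to the analogous subspace for $\widetilde\mu$; these follow from the injectivity/density of the embedding $\mathcal{F}\hookrightarrow L^2(\mu)$ being a conformally invariant statement, but they should be stated explicitly.
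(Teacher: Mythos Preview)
Your argument is correct but follows a genuinely different route from the paper's. You work at the level of quadratic forms, exploiting the conformal invariance of the Dirichlet energy in dimension two to show that $\widetilde v\mapsto\widetilde v\circ f$ is a unitary from $L^{2}(\widetilde\Omega,\widetilde\mu)$ onto $L^{2}(\Omega,\mu)$ carrying $\mathcal{E}_{\widetilde\mu}$ to $\mathcal{E}_{\mu}$, and hence intertwining the self-adjoint operators. The paper instead works at the level of the Green operator: starting from $\widetilde u=\lambda G_{\widetilde\mu}\widetilde u$ (via \cite[Theorem~1.3]{Hu-Lau-Ngai_2006}), it invokes the conformal invariance of Green's functions, $G^{\Omega}_{y}(x)=G^{\widetilde\Omega}_{f(y)}(f(x))$ from \cite[Theorem~1]{Gutkin-Newton_2004}, to transport the integral equation directly to $u=\lambda G_{\mu}u$ on $\Omega$, and then reads off membership in ${\rm dom}(-\Delta_{\mu})$ and continuity from the established Green-operator machinery. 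Your approach is arguably cleaner conceptually and sidesteps the need for an explicit Green-function identity; the paper's approach has the advantage of landing immediately on the representation $u=\lambda G_{\mu}u$, so continuity follows at once from results already proved in the paper (or in \cite{Ngai-Zhang-Zhao_2024}) without pulling back from $\widetilde\Omega$. One small point: for the continuity step you appeal to Theorem~\ref{thcon}, which carries a smooth-boundary hypothesis not assumed here; the paper instead cites \cite[Theorem~1.2]{Ngai-Zhang-Zhao_2024} for domains in $\R^{2}$, and you should do the same (your conclusion is unaffected).
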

\begin{proof}
Let $\Delta^{\Omega}$ and $\Delta^{\widetilde{\Omega}}$ be the Laplace-Beltrami operators defined on $\Omega$ and $\widetilde{\Omega}$ respectively. Let $G_{\widetilde{y}}^{\widetilde{\Omega}}(\widetilde{x})$ be the Green function of $\Delta^{\widetilde{\Omega}}$. By \cite[Theorem 1.3]{Hu-Lau-Ngai_2006}, $\widetilde{u}=G_{\widetilde{\mu}}(-\Delta_{\widetilde{\mu}}\widetilde{u})=\lambda G_{\widetilde{\mu}}\widetilde{u}.$
Hence, for any $\widetilde{x}\in\widetilde{\Omega}$,
\begin{equation}\label{eq-1}
\widetilde{u}(\widetilde{x})=\lambda \big(G_{\widetilde{\mu}}\widetilde{u}\big)(\widetilde{x})=\lambda\int_{\widetilde{\Omega}}G_{\widetilde{y}}^{\widetilde{\Omega}}(\widetilde{x})\cdot\widetilde{u}(\widetilde{y})\,d\widetilde{\mu}(\widetilde{y}).
\end{equation}
Let $x:=f^{-1}(\widetilde{x})$ and $y:=f^{-1}(\widetilde{y})$. By (\ref{eq-1}), we have
\begin{equation}\label{eq-2}
u(x)=\widetilde{u}(f(x))=\lambda\int_{\Omega}G_{f(y)}^{\widetilde{\Omega}}(f(x))\cdot\widetilde{u}(f(y))\,d(\widetilde{\mu}\circ f)(y).
\end{equation}
It view of \cite[Theorem 1]{Gutkin-Newton_2004}, which asserts that if two Riemann surfaces $Z$ and $U$ endowed with conformal metrics, and are conformal equivalent through $\phi:Z\to U$, then the Green's functions $G^{Z}_{y}(x)$ and $G^{U}_{y}(x)$ on $Z$ and $U$ respectively, satisfy $G^{Z}_{y}(x)=G^{U}_{\phi(y)}(\phi(x)).$ Moreover, every Riemann surface admits a metric which conformal to the Euclidean (see, e.g., \cite[Lemma 10.1.1]{Jost_2017}).  Therefore, $G_{y}^{\Omega}(x)=G_{f(y)}^{\widetilde{\Omega}}(f(x)).$ Combining this and (\ref{eq-2}), we have
\begin{equation}\label{eq-3}
u(x)=\lambda\int_{\Omega}G_{y}^{\Omega}(x)\cdot u(y)\,d\mu(y)=\lambda(G_{\mu}u)(x).
\end{equation}
Since $f$ is bijective, (\ref{eq-3}) holds for all $x\in\Omega$. Moreover, $u\in W_{0}^{1,2}(\Omega)$ as $\widetilde{u}\in W_{0}^{1,2}(\widetilde{\Omega})$ and $u\in L^{2}(\Omega,\mu)$ as $\widetilde{u}\in L^{2}(\widetilde{\Omega},\widetilde{\mu})$. Thus, by a same argument as in \cite[Theorem 1.3]{Hu-Lau-Ngai_2006}, we have $u\in{\rm dom}(-\Delta_{\mu})$ and $-\Delta_{\mu}u=-\Delta_{\mu}\big(\lambda G_{\mu}u\big)=\lambda u,$ i.e., $u$ is a $\lambda$-eigenfunction of $-\Delta_{\mu}$. In view of \cite[Theorem 1.2]{Ngai-Zhang-Zhao_2024}, $u$ is continuous on $\Omega$.
\end{proof}

Basis on Theorem \ref{th+} and \cite[Example 6.1]{Ngai-Zhang-Zhao_2024}, we have the following example. Let $\mathbb{S}^{2}\subseteq\mathbb{R}^{3}$ and $\mathbb{D}^{2}\subseteq\mathbb{R}^{2}$ be the sphere of radius $2$ and the open disk of radius $2$ respectively, i.e.,
$$\mathbb{S}^{2}:=\{(x_{1},x_{2},x_{3})\in\mathbb{R}^{3}:\sum_{i=1}^{3}x_{i}^{2}=4\}\quad\text{and}\quad\mathbb{D}^{2}:=\{(x_{1},x_{2})\in\mathbb{R}^{2}:x_{1}^{2}+x_{2}^{2}<4\}.$$
Let $\mathbb{S}^{2}_{+}:=\{(x_{1},x_{2},x_{3})\in \mathbb{S}^{2}:x_{3}>0\}$ be the upper hemisphere of $\mathbb{S}^{2}$. Recall that the {\em stereographic projection} $\phi:\mathbb{S}^{2}_{+}\to\mathbb{D}^{2}$ is defined as
$$\phi(x_{1},x_{2},x_{3})=\frac{2}{2+x_{3}}(x_{1},x_{2})=:(y_{1},y_{2}).$$
The inverse of $\phi$ is given by
$$\phi^{-1}(y_{1},y_{2})=\frac{2}{y_{1}^{2}+y^{2}_{2}+4}(4y_{1},4y_{2},4-y_{1}^{2}-y_{2}^{2}).$$
Obviously, $\phi$ is conformal. Hence, for any bounded domain $\Omega\subseteq\mathbb{S}^{2}_{+}$, there exists a bounded domain $\widetilde{\Omega}:=\phi(\Omega)\subseteq\mathbb{D}^{2}$ which is conformally equivalent to $\Omega$, and vice versa.

Let $\widetilde{U}:=(-1,1)\times(-1,1)\subseteq\mathbb{D}^{2}$ and let $U:=\phi^{-1}(U)$. Then $U$ and $\widetilde{U}$ are conformally equivalent. Let $\widetilde{\mu}_{0}$ and $\widetilde{\mu}_{1}$ be the 1-dimension Lebesgue measures defined on $(-1,1)\times\{0\}$ and $\{0\}\times(-1,1)$ respectively. Let $\widetilde{\mu}:=\widetilde{\mu}_{0}+\widetilde{\mu}_{1}$ (see Figure 1). Let $\mu:=\widetilde{\mu}\circ\phi$ on $U$ (see Figure 2). Obviously, $\underline{\rm dim}_{\infty}(\widetilde{\mu})=1>0$. In view of Proposition \ref{prop-1}, $\underline{\rm dim}_{\infty}(\mu)>0$. Let $\Delta_{\widetilde{\mu}}$ and $\Delta_{\mu}$ be the Kre\u{\i}n-Feller operators defined by $\widetilde{\mu}$ on $\widetilde{U}$ and $\mu$ on $U$ respectively. It follows by \cite[Example 6.1]{Ngai-Zhang-Zhao_2024} that $\widetilde{u}(x,y)=1+|xy|-|x|-|y|$ is a $2$-eigenfunction of $-\Delta_{\widetilde{\mu}}$ and $\widetilde{u}$ is continuous on $U$. Moreover, rewriting
$$\phi(x_{1},x_{2},x_{3})=\frac{2}{2+x_{3}}(x_{1},x_{2},0)=\Big(\frac{2x_{1}}{2+x_{3}},\frac{2x_{2}}{2+x_{3}},x_{1}^{2}+x_{2}^{2}+x_{3}^{2}-4\Big),$$
we have
$$|J_{\phi}|=
\left| \begin{array}{ccc}
\frac{2}{2+x_{3}} & 0 & \frac{-2x_{1}}{(2+x_{3})^{2}} \\
0 & \frac{2}{2+x_{3}} & \frac{-2x_{2}}{(2+x_{3})^{2}} \\
2x_{1} & 2x_{2} & 2x_{3}
\end{array} \right|=\frac{16}{(2+x_{3})^{2}}.
$$
Therefore, $|J_{\phi^{-1}}|=\frac{(2+x_{3})^{2}}{16}\leq1$. Hence, the assertions in the following example follow directly from Theorem \ref{th+}.
\begin{figure}[htbp]
\centering
\begin{minipage}
  {0.45\textwidth}
  \centering
  \begin{tikzpicture}[scale=0.6,domain=0:180,>=stealth]
 \coordinate (org) at (0,0);
    \foreach\i/\text in{{0,5}/{},{5,0}/{x}}
    \draw[help lines,->] (org)node[above right]{}--(\i)node[above]{$\text$};
    \draw[help lines,->]node[left] at (0,5){$y$};
    \draw[help lines](0,-5)--(0,0);
    \draw[help lines](-5,0)--(0,0);
    \draw[thin,black,dashed] (-3,3)--(3,3);
    \draw[thin,black,dashed] (-3,-3)--(-3,3);
    \draw[thin,black,dashed] (3,3)--(3,-3);
    \draw[thin,black,dashed] (-3,-3)--(3,-3);

 \draw[thick,red](3,0)--(-3,0);
   \node[above,font=\tiny] at (1.8,0){\color{red}$\widetilde{\mu}_{0}$};
   \draw[help lines,->](1.6,0.3)--(1.6,0);
  \draw[thick,green](0,3)--(0,-3);
  \node[left,font=\tiny] at (0,-1.8){\color{green}$\widetilde{\mu}_{1}$};
  \draw[help lines,->](-0.3,-1.8)--(0,-1.8);
  \node[below, font=\tiny,text=gray] at (0,-3) {$-1$};
  \node[below, font=\tiny,text=gray] at (3.2,0) {$1$};
   \node[left, font=\tiny,text=gray] at (0,3.4) {$1$};
  \node[below, font=\tiny,text=gray] at (-3.6,0) {$-1$};
   \end{tikzpicture}
 \caption{The measure $\widetilde{\mu}$}
 \end{minipage}
 \hspace{0.05\textwidth}
 \begin{minipage}
 {0.45\textwidth}
 \centering
\begin{tikzpicture}[scale=.3,domain=0:180,>=stealth]
    \coordinate (org) at (0,0);
    \draw (0,0) circle[radius=8];
    \draw (org) ellipse (8cm and 3cm);
    \draw[help lines,dashed](9.5,0)--(0,0);
    \draw[help lines,dashed](0,-9.5)--(0,0);
    \draw[help lines,dashed](5.8,7.25)--(0,0);
    \draw[white] plot ({8*cos(\x)},{3*sin(\x)});
    \foreach\i/\text in{{-6.8,-8.5}/y,{-10.5,0}/x,{0,10.5}/{}}
    \draw[help lines,->] (org)node[above right]{}--(\i)node[above]{$\text$};
    \draw[help lines,->]node[left] at (0,10){$z$};
    \draw[thin,black] (-2.90,1.80)--(5.20,1.80);
    \draw[thin,black] (-5.52,-1.8)--(2.52,-1.8);
    \draw[thin,black] (5.20,1.80)--(2.5,-1.8);
    \draw[thin,black] (-2.90,1.80)--(-5.5,-1.8);
    \draw[thin,blue] (-6.5,4.7) to [out=-120, in=110](-7.1,0.2) to [out=36, in=156] (3.5,0.6) to [out=80, in=-140] (6.2,5);
    \draw[thin,blue,densely dashed] (-6.5,4.7) to [out=30, in=140] (-3.5,3.5) to [out=36,in=156] (6,3.2) to [out=70,in=90] (6.2,4.9);
    \draw[thin,cyan,densely dashed](0,-8)--(6.2,5);
    \draw[thin,cyan,densely dashed](0,-8)--(-6.5,4.7);
    \draw[thick,red](3.8,0)--(-4.2,0);
    \draw[thick,green](1.36,1.8)--(-1.36,-1.8);
    \draw[thick,red] (6.2,5.03) to [out=130, in=-3](0,8) to [out=182, in=50](-6.5,4.7);
    \draw[thick,green](-2.36,1.97) to [out=100, in=-170] (0,8);
    \draw[thin,cyan,densely dashed](0,-8)--(-2.36,1.97);
     \draw[thin,cyan,densely dashed](0,-8)--(1.8,4.6);
    \draw[thick,green,densely dashed](0,8) to [out=-5, in=90] (1.8,4.6);
 \end{tikzpicture}
\caption{The measure $\mu$}
\end{minipage}
\end{figure}
\begin{exam}\label{8.1}
Use the above notation. Then $u:=\widetilde{u}\circ\phi$ is a $2$-eigenfunction of $-\Delta_{\mu}$ on $U$ (see Figure 3). Consequently, $u\in C(U)$.
\end{exam}

\begin{figure}[h!]
\centering

\begin{minipage}{0.7\linewidth}
\centering
  \includegraphics[scale=0.6,width=0.6\linewidth]{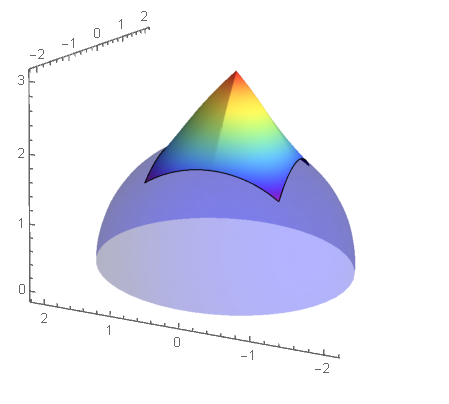}
\caption{The $2$-eigenfunction in Example \ref{8.1}.}
\end{minipage}
\end{figure}

\appendix
\section{Proof of Corollary \ref{corollary1}}
\renewcommand\theequation{A.\arabic{equation}}
\begin{proof}[Proof of Corollary \ref{corollary1}]
{\em Case 1. $n=1$.} Let $v\in C_{c}^{\infty}(\Omega)$ and $\epsilon>0$ be arbitrary. Then $u+\epsilon v\in\mathcal{H}_{\mu}$. Since $R_{\mu}(u)=\lambda_{1}$, by Lemma \ref{lemRu}, the function
$$f(\epsilon):=\frac{\int_{\Omega}|\nabla(u+\epsilon v)|^{2}\,d\nu}{\int_{\Omega}|u+\epsilon v|^{2}\,d\mu}$$
has a minimum at $\epsilon=0$. Hence $f'(0)=0$. Note that
$$f'(0)=\frac{\int_{\Omega}|u|^{2}\,d\mu\cdot2\int_{\Omega}\langle\nabla u,\nabla v\rangle\,d\nu-2\int_{\Omega}uv\,d\mu\cdot\int_{\Omega}|\nabla u|^{2}\,d\nu}{\Big(\int_{\Omega}|u|^{2}\,d\mu\Big)^{2}}.$$
Thus
\begin{equation}\label{eqeq}
\int_{\Omega}|u|^{2}\,d\mu\cdot\int_{\Omega}\langle\nabla u, \nabla v\rangle\,d\nu=\int_{\Omega}uv\,d\mu\cdot\int_{\Omega}|\nabla u|^{2}\,d\nu.
\end{equation}
Observing that
$$R_{\mu}(u)=\frac{\mathcal{E}(u,u)}{(u,u)_{L^{2}(\Omega,\mu)}}=\frac{\int_{\Omega}|\nabla u|^{2}\,d\nu}{\int_{\Omega}|u|^{2}\,d\mu}=\lambda_{1}.$$
Substitute this into (\ref{eqeq}), we have
$$\int_{\Omega}\langle\nabla u, \nabla v\rangle\,d\nu=\lambda_{1}\int_{\Omega}uv\,d\mu,$$
which implies that $u$ is a $\lambda_{1}$-eigenfunction.

\noindent {\em Case 2. $n\geq2$.} Let $v\in C_{c}^{\infty}(\Omega)$ and
$$\omega(x):=v(x)-\sum_{k=1}^{n-1}a_{k}u_{k}(x), \qquad\text{where}~a_{k}:=\frac{(v,u_{k})_{L^{2}(\Omega,\mu)}}{(u_{k},u_{k})_{L^{2}(\Omega,\mu)}}.$$
Then $\omega\in\mathcal{H}_{\mu}$ and for $k=1,\ldots,n-1$, $(\omega,u_{k})_{L^{2}(\Omega,\mu)}=0$.
Let
$$\widetilde{f}(\epsilon):=\frac{\int_{\Omega}|\nabla(u+\epsilon\omega)|^{2}\,d\nu}{\int_{\Omega}|u+\epsilon\omega|^{2}\,d\mu}.$$
Using the argument in Case 1 and replacing $v$ in (\ref{eqeq}) by $\omega$, we have
$$\int_{\Omega}|u|^{2}\,d\mu\cdot\int_{\Omega}\langle\nabla u, \nabla \omega\rangle\,d\nu=\int_{\Omega}u\omega\,d\mu\cdot\int_{\Omega}|\nabla u|^{2}\,d\nu,$$
i.e.,
\begin{equation}\label{eqeq1}
\int_{\Omega}\langle\nabla u, \nabla(v-\sum_{k=1}^{n-1}a_{k}u_{k})\rangle\,d\nu=\lambda_{n}\int_{\Omega}u(v-\sum_{k=1}^{n-1}a_{k}u_{k})\,d\mu,
\end{equation}
as $R_{\mu}(u)=\lambda_{n}$. Note that by Lemma \ref{lemRu}, for $k=1,\ldots,n-1$, $u\in({\rm span}\{u_{1},\ldots,u_{k}\})^{\bot}$.
Hence
\begin{equation}\label{eqeq2}
\mathcal{E}(u,u_{k})=(u,\lambda_{k}u_{k})_{L^{2}(\Omega,\mu)}=0.
\end{equation}
Combining (\ref{eqeq1}) and (\ref{eqeq2}), we have
$$\int_{\Omega}\langle\nabla u, \nabla v\rangle\,d\nu=\lambda_{n}\int_{\Omega}uv\,d\mu,$$
which implies that $u$ is a $\lambda_{n}$-eigenfunction.
\end{proof}

\section{Several propositions}
\setcounter{equation}{0}
\renewcommand\theequation{B.\arabic{equation}}
\renewcommand{\theprop}{B.\arabic{prop}}

Let $M$ be a compact connected smooth Riemannian manifold with $\partial M=\emptyset$. We first point that $W_{0}^{1,2}(M)=W^{1,2}(M)$ as $M$ is complete (see, e.g., \cite[Section 2.1]{Hebey_1996}) and $C^{\infty}(M)=C_{c}^{\infty}(M)$ as $M$ is compact.
Let $C>0$ be a constant and {\rm span}$\{C\}$ be the subspace of Hilbert space $W^{1,2}(M)$ spanned by $C$. Let
\begin{equation}\label{WM}
\mathcal{W}(M):={\rm span}\{C\}^{\bot}=\{f\in W^{1,2}(M):\int_{M}f\,d\nu=0\}.
\end{equation}
Obviously, $\mathcal{W}(M)$ is a closed subspace of $W^{1,2}(M)$. Hence $\mathcal{W}(M)$ is a Hilbert space (see e.g., \cite[Theorem 3.2--4]{Kreyszig_1978}). Moreover, there exists a constant $C>0$ such that for any $f\in\mathcal{W}(M)$ the following Poincar\'{e} inequality holds (see e.g., \cite[Section 3.1]{Schoen-Yau_1994})
\begin{equation}\label{PIMm}
\int_{M}|f|^{2}\,d\nu\leq C\int_{M}|\nabla f|^{2}\,d\nu.
\end{equation}
Note that (\ref{PIMm}) implies that $\mathcal{W}(M)$ is equipped with the equivalent norm $\|f\|_{\mathcal{W}(M)}=\|\nabla f\|_{L^{2}(M)}.$
\begin{prop}\label{propapp1}
Let $\mathcal{W}(M)$ be defined as in (\ref{WM}). Then $C_{c}^{\infty}(M)\cap\mathcal{W}(M)$ is dense in $\mathcal{W}(M)$.
\end{prop}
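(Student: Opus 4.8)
The plan is to combine the density of $C^\infty(M)$ in $W^{1,2}(M)$ with the elementary observation that subtracting off the mean value is a bounded operation that does not disturb $W^{1,2}$-convergence. First I recall the remarks preceding the statement: since $M$ is compact without boundary, $C_c^\infty(M)=C^\infty(M)$ and $W_0^{1,2}(M)=W^{1,2}(M)$, so in particular $C^\infty(M)$ is dense in $W^{1,2}(M)$. Also the linear functional $h\mapsto\int_M h\,d\nu$ is continuous on $W^{1,2}(M)$, because $\nu(M)<\infty$ and hence $|\int_M h\,d\nu|\le\nu(M)^{1/2}\|h\|_{L^2(M)}\le\nu(M)^{1/2}\|h\|_{W^{1,2}(M)}$.

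Now fix $f\in\mathcal{W}(M)$ and choose $f_n\in C^\infty(M)$ with $f_n\to f$ in $W^{1,2}(M)$. By the continuity of the mean functional, $\int_M f_n\,d\nu\to\int_M f\,d\nu=0$. Set
$$g_n:=f_n-\nu(M)^{-1}\int_M f_n\,d\nu.$$
Then $g_n\in C^\infty(M)$ and $\int_M g_n\,d\nu=0$, so $g_n\in C^\infty(M)\cap\mathcal{W}(M)=C_c^\infty(M)\cap\mathcal{W}(M)$.

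It remains to verify $g_n\to f$ in $\mathcal{W}(M)$. Writing $g_n-f=(f_n-f)-\nu(M)^{-1}\big(\int_M f_n\,d\nu\big)$, the first term tends to $0$ in $W^{1,2}(M)$ by the choice of $f_n$, while the second term is a constant function whose $W^{1,2}(M)$-norm equals $\nu(M)^{-1/2}\,|\int_M f_n\,d\nu|\to 0$. Hence $g_n\to f$ in $W^{1,2}(M)$, and since the norm $\|\cdot\|_{\mathcal{W}(M)}=\|\nabla\cdot\|_{L^2(M)}$ is equivalent to $\|\cdot\|_{W^{1,2}(M)}$ on $\mathcal{W}(M)$ by the Poincar\'e inequality (\ref{PIMm}), also $g_n\to f$ in $\|\cdot\|_{\mathcal{W}(M)}$. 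This gives the density. There is no genuine obstacle here; the only point requiring a moment of care is that mean-subtraction preserves the approximation, which is exactly what the continuity of $h\mapsto\int_M h\,d\nu$ guarantees.
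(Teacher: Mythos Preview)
Your proof is correct and follows essentially the same approach as the paper: both take a smooth approximating sequence, subtract off the mean to land in $\mathcal{W}(M)$, and verify that this modified sequence still converges. The only cosmetic difference is that the paper bounds $\|v_n-u\|_{W^{1,2}(M)}\le 2\|\phi_n-u\|_{W^{1,2}(M)}$ directly via H\"older's inequality after writing $\int_M\phi_n\,d\nu=\int_M(\phi_n-u)\,d\nu$, whereas you phrase the same estimate as continuity of the mean functional; these are the same computation.
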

\begin{proof}
Let $u\in\mathcal{W}(M)\subseteq W^{1,2}(M)$ be arbitrary. Since $C_{c}^{\infty}(M)$ is dense in $W^{1,2}(M)$, there exists a sequence $\{\phi_{n}\}\subseteq C_{c}^{\infty}(M)$ such that $\phi_{n}\to u$ in $W^{1,2}(M)$ as $n\to\infty$. Let
$$v_{n}:=\phi_{n}-\frac{1}{\nu(M)}\int_{M}\phi_{n}\,d\nu.$$
Then $v_{n}\in C_{c}^{\infty}(M)$ and $\int_{M}v_{n}\,d\nu=0$. Thus $v_{n}\in C_{c}^{\infty}(M)\cap\mathcal{W}(M)$. Moreover,
\begin{align}\label{dense}
\|v_{n}-u\|_{W^{1,2}(M)}=&\Big\|\phi_{n}-\nu(M)^{-1}\int_{M}\phi_{n}\,d\nu-u\Big\|_{W^{1,2}(M)}\notag\\
=&\Big\|\phi_{n}-u-\nu(M)^{-1}\int_{M}(\phi_{n}-u)\,d\nu\Big\|_{W^{1,2}(M)}\qquad\quad(u\in\mathcal{W}(M))\notag\\
\leq&\|\phi_{n}-u\|_{W^{1,2}(M)}+\nu(M)^{-1/2}\int_{M}|\phi_{n}-u|\,d\nu\notag\\
\leq&2\|\phi_{n}-u\|_{W^{1,2}(M)}.\qquad\qquad\qquad(\text{by~H\"{o}lder's~inequality})
\end{align}
Letting $n\to\infty$ in both side of (\ref{dense}), we have $v_{n}\to u$ in $\mathcal{W}(M)$. Combining this and the fact that $C_{c}^{\infty}(M)\cap\mathcal{W}(M)\subseteq\mathcal{W}(M)$ completes the proof.
\end{proof}
Let $\big(\mathcal{W}(M)\big)'$ be the dual space of $\mathcal{W}(M)$. The following proposition follows from Proposition \ref{prop3}.

\begin{prop}\label{App2}
Let $d$ and $M$ be as in Theorem \ref{CCT}. If $\mu\in\big(\mathcal{W}(M)\big)'$, then there exists a unique weak solution $u\in \mathcal{W}(M)$ of (\ref{2.1}).
\end{prop}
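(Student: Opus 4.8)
The plan is to transcribe the proof of Proposition \ref{prop3}, now carried out in the Hilbert space $\mathcal{W}(M)$ equipped with the inner product $\langle u,v\rangle_{\mathcal{W}(M)}:=\int_{M}\langle\nabla u,\nabla v\rangle\,d\nu$, which by the Poincar\'e inequality (\ref{PIMm}) is equivalent to the $W^{1,2}(M)$-inner product on $\mathcal{W}(M)$. Since $\mu\in\big(\mathcal{W}(M)\big)'$, there is a constant $A>0$ with $\big|\int_{M}\varphi\,d\mu\big|\leq A\|\varphi\|_{\mathcal{W}(M)}$ for all $\varphi\in\mathcal{W}(M)$, i.e. $\varphi\mapsto\int_{M}\varphi\,d\mu$ is a bounded linear functional on $\mathcal{W}(M)$. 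By the Riesz representation theorem there is a unique $u\in\mathcal{W}(M)$ such that
$$\int_{M}\varphi\,d\mu=\int_{M}\langle\nabla u,\nabla\varphi\rangle\,d\nu\qquad\text{for all }\varphi\in\mathcal{W}(M).$$

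Next I would check that this $u$ is a weak solution of (\ref{2.1}). Every $v\in C_{c}^{\infty}(M)$ with $\int_{M}v\,d\nu=0$ lies in $\mathcal{W}(M)$ (recall $C^{\infty}(M)=C_{c}^{\infty}(M)$ since $M$ is compact), so applying the identity above with $\varphi=v$ and then Green's formula — valid because $\partial M=\emptyset$ and $u\in W^{1,2}(M)$; cf. \cite[Lemma 4.4]{Grigoryan_2009} — gives $\int_{M}v\,d\mu=\int_{M}\langle\nabla u,\nabla v\rangle\,d\nu=-\int_{M}u\Delta v\,d\nu$, which is exactly (\ref{7.2}). Moreover $u\in\mathcal{W}(M)\subseteq W^{1,2}(M)\subseteq L^{2}(M)\subseteq L^{1}(M)$ because $M$ is compact, so $u$ is indeed a weak solution.

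For uniqueness, suppose $u_{1},u_{2}\in\mathcal{W}(M)$ are both weak solutions and set $w:=u_{1}-u_{2}\in\mathcal{W}(M)$. Then $\int_{M}w\Delta v\,d\nu=0$ for every $v\in C_{c}^{\infty}(M)$ with $\int_{M}v\,d\nu=0$, and Green's formula converts this into $\int_{M}\langle\nabla w,\nabla v\rangle\,d\nu=0$ for all $v\in C_{c}^{\infty}(M)\cap\mathcal{W}(M)$. By Proposition \ref{propapp1} this set is dense in $\mathcal{W}(M)$, so the identity extends to all $v\in\mathcal{W}(M)$; taking $v=w$ gives $\|\nabla w\|_{L^{2}(M)}^{2}=0$, hence $w$ is constant, and since $\int_{M}w\,d\nu=0$ we conclude $w\equiv0$.

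The only genuinely non-routine point is matching the test-function class $\{v\in C_{c}^{\infty}(M):\int_{M}v\,d\nu=0\}$ appearing in the definition of a weak solution with the full space $\mathcal{W}(M)$ used in the Riesz argument; this is precisely the content of the density Proposition \ref{propapp1}, so once that is invoked, the remainder is a direct adaptation of the proof of Proposition \ref{prop3}.
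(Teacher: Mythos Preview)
Your proof is correct and follows essentially the same approach as the paper, which simply says ``By a similar argument as that in the proof of Proposition \ref{prop3}'' for existence and invokes the density of $C_{c}^{\infty}(M)\cap\mathcal{W}(M)$ in $\mathcal{W}(M)$ (Proposition \ref{propapp1}) for uniqueness. You have in fact written out explicitly the details the paper leaves implicit, including the uniqueness argument via $\|\nabla w\|_{L^{2}(M)}=0$.
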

\begin{proof}
By a similar argument as that in the proof of Proposition \ref{prop3}, one can prove that $u$ is a weak solution of (\ref{2.1}). Since $C_{c}^{\infty}(M)\cap\mathcal{W}(M)$ is dense in $\mathcal{W}(M)$, $u$ is unique.
\end{proof}
\begin{prop}\label{App3}
Assume the hypotheses of Theorem \ref{CCT}. Then for any $f\in L^{2}(M,\mu)$, $f\mu\in\big(\mathcal{W}(M)\big)'$.
\end{prop}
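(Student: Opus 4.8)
The plan is to follow the proof of Proposition \ref{prop5} almost verbatim, replacing the Sobolev space $W_{0}^{1,2}(\Omega)$ by the Hilbert space $\mathcal{W}(M)$ and using the Poincar\'e inequality (\ref{PIMm}) in place of (\ref{eq1}). First I would record that, since $\underline{\dim}_{\infty}(\mu)>d-2$, the measure $\mu$ satisfies (MPI) on $M$ by \cite[Theorem 2.1]{Ngai-Ouyang_2024}: there is a constant $C_{0}>0$ with $\int_{M}|v|^{2}\,d\mu\le C_{0}\int_{M}|\nabla v|^{2}\,d\nu$ for all $v\in C_{c}^{\infty}(M)=C^{\infty}(M)$. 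As in \cite[Proposition 3.2]{Ngai-Ouyang_2024}, by density of $C^{\infty}(M)$ in $W^{1,2}(M)=W_{0}^{1,2}(M)$ this inequality extends to give a bounded linear map $\phi\mapsto\overline{\phi}$ from $W^{1,2}(M)$ into $L^{2}(M,\mu)$, agreeing with the identity on $C^{\infty}(M)$, with
$$\|\overline{\phi}\|_{L^{2}(M,\mu)}^{2}\le C_{0}\,\|\nabla\phi\|_{L^{2}(M)}^{2}\qquad\text{for all }\phi\in W^{1,2}(M).$$
Thus every $\phi\in\mathcal{W}(M)\subseteq W^{1,2}(M)$ has a well-defined $L^{2}(M,\mu)$-representative $\overline{\phi}$.

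Next, fix $f\in L^{2}(M,\mu)$ and define $f\mu[\phi]:=\int_{M}\overline{\phi}\,f\,d\mu$ for $\phi\in\mathcal{W}(M)$. Linearity of $f\mu$ follows from the linearity of $\phi\mapsto\overline{\phi}$. Applying the Cauchy--Schwarz inequality in $L^{2}(M,\mu)$, the bound just displayed, and the fact that $\|\cdot\|_{\mathcal{W}(M)}$ is equivalent to $\|\nabla\,\cdot\,\|_{L^{2}(M)}$ (a consequence of the Poincar\'e inequality (\ref{PIMm})), I would estimate
\begin{align*}
|f\mu[\phi]| &= \Big|\int_{M}\overline{\phi}\,f\,d\mu\Big|
\le \|f\|_{L^{2}(M,\mu)}\,\|\overline{\phi}\|_{L^{2}(M,\mu)}\\
&\le C_{0}^{1/2}\|f\|_{L^{2}(M,\mu)}\,\|\nabla\phi\|_{L^{2}(M)}
= C_{0}^{1/2}\|f\|_{L^{2}(M,\mu)}\,\|\phi\|_{\mathcal{W}(M)}.
\end{align*}
Hence $f\mu$ is a bounded linear functional on $\mathcal{W}(M)$, i.e., $f\mu\in\big(\mathcal{W}(M)\big)'$, which is the claim.

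The computation is routine; the only point requiring care is that an element of $W^{1,2}(M)$ is a priori defined only $\nu$-almost everywhere, so one must invoke the hypothesis $\underline{\dim}_{\infty}(\mu)>d-2$ (through (MPI)) both to select a canonical $L^{2}(M,\mu)$-representative $\overline{\phi}$ and to control its $L^{2}(M,\mu)$-norm by $\|\nabla\phi\|_{L^{2}(M)}$. Once this representative map is in hand, the boundedness estimate above is immediate. I do not expect any genuine obstacle beyond keeping track of this representative issue, exactly as in the bounded-domain case of Proposition \ref{prop5}.
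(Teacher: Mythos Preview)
Your approach matches the paper's exactly: the paper simply states that the proof is the same as that of Proposition~\ref{prop5} and omits it. One caveat worth fixing: on a closed manifold the gradient-only form of (MPI), $\int_{M}|v|^{2}\,d\mu\le C_{0}\int_{M}|\nabla v|^{2}\,d\nu$ for all $v\in C^{\infty}(M)$, is false (take $v$ a nonzero constant), so your displayed bound $\|\overline{\phi}\|_{L^{2}(M,\mu)}^{2}\le C_{0}\|\nabla\phi\|_{L^{2}(M)}^{2}$ cannot hold on all of $W^{1,2}(M)$; what $\underline{\dim}_{\infty}(\mu)>d-2$ yields is a bounded embedding $W^{1,2}(M)\hookrightarrow L^{2}(M,\mu)$ with the full Sobolev norm, and then the Poincar\'e inequality~(\ref{PIMm}) on $\mathcal{W}(M)$ converts this to a gradient bound for $\phi\in\mathcal{W}(M)$. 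Since your final estimate only uses $\phi\in\mathcal{W}(M)$, this is a cosmetic rewrite and the argument goes through unchanged.
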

\begin{proof}
The proof is the same as that in the proof of Proposition \ref{prop5} and is omitted.
\end{proof}

\setcounter{equation}{0}

\end{document}